\numberwithin{equation}{section}
\algnewcommand\algorithmicinput{\makebox[\widthof{\textbf{Output}}][l]{\textbf{Input}}\textbf{:}}
\algnewcommand\Input{\item[\algorithmicinput]}
\algnewcommand\algorithmicoutput{\textbf{Output:}}
\algnewcommand\Output{\item[\algorithmicoutput]}
\algnewcommand\algorithmicto{\textbf{to}}
\newenvironment{namedenum}[2][9]{%
  \begin{enumerate}[noitemsep,labelindent=1em,labelwidth=\widthof{(#2#1)},leftmargin=!,align=right,label*=(#2\arabic*)]%
  
}{%
  \end{enumerate}%
}
\newtheorem{theorem}{Theorem}
\newtheorem{lemma}[theorem]{Lemma}
\newtheorem{corollary}[theorem]{Corollary}
\newtheorem{proposition}[theorem]{Proposition}
\theoremstyle{definition}
\newtheorem{definition}[theorem]{Definition}
\newtheorem{remark}{Remark}
\newcommand{\nospleft}{\mathopen{}\mathclose\bgroup\left}
\newcommand{\nospright}{\aftergroup\egroup\right}
\newcommand{\nospparen}[1]{\nospleft({#1}\nospright)}
\let\paren=\nospparen
\renewcommand{\setprn}[1]{\nospleft\{{#1}\nospright\}}
\renewcommand{\setprnsep}[2]{\nospleft\{{#1}\mathrel{}\middle|\mathrel{}{#2}\nospright\}}
\newcommand{\condition}[2]{{{#1}\::\:{#2}}}
\renewcommand{\absprn}[1]{\nospleft|{#1}\nospright|}
\renewcommand{\sqbracket}[1]{\nospleft[{#1}\nospright]}
\newcommand{\norm}[1]{\nospleft\lVert{#1}\nospright\rVert}
\newcommand{\ceil}[1]{\nospleft\lceil{#1}\nospright\rceil}
\newcommand{\floor}[1]{\nospleft\lfloor{#1}\nospright\rfloor}
\renewcommand{\app}[2]{{#1}\nospparen{#2}}
\newcommand{\nospsqparen}[1]{\nospleft[{#1}\nospright]}
\newcommand{\sqapp}[2]{{#1}\nospsqparen{#2}}
\renewcommand{\Order}[1]{\app{\mathrm{O}}{#1}}
\newcommand{\maxflow}[2]{\app{\mathrm{MF}}{{#1}, {#2}}}
\newcommand{\wc}{,\;}
\newcommand{\setRext}{\overline{\setR}}
\newcommand{\supp}{\mathop{\mathrm{supp}}}
\renewcommand{\meet}{\mathbin\wedge}
\renewcommand{\join}{\mathbin\vee}
\newcommand{\sqmeet}{\mathbin\sqcap}
\newcommand{\sqjoin}{\mathbin\sqcup}
\renewcommand{\bigland}{\bigwedge}
\newcommand{\inconsistent}{\mathrel\smile}
\newcommand{\mininconsistent}{\mathrel{\overset{{}_{\scriptscriptstyle{\bullet}}}{\smile}}}
\newcommand{\intset}[1]{\sqbracket{#1}}
\newcommand{\irreducibles}[1]{{#1}^\mathrm{ir}}
\newcommand{\consideals}[1]{\app{\mathcal{C}}{#1}}
\newcommand{\maxconsideals}[1]{\app{\mathcal{C}_{\mathrm{max}}}{#1}}
\newcommand{\minimizers}[1]{\app{\mathcal{D}_{\mathrm{min}}}{#1}}
\newcommand{\vectorpos}[2]{\overset{\overset{#2}{\smash{\smallsmile}\rule{0pt}{3pt}}}{#1}}
\newcommand{\partfix}[3]{{#1}_{#2, #3}}
\newcommand{\fixarg}[3]{{#1}_{{#2},{#3}}}
\newcommand{\legalize}[1]{\smash{\check{#1}}}
\title{A compact representation for minimizers of $k$-submodular functions%
  \footnote{An earlier version of this paper was presented at the 4th International Symposium on Combinatorial Optimization (ISCO 2016), Vietri sul Mare, Italy, May 16--18, 2016~\cite{isco2016}.}%
}
\author{Hiroshi Hirai}
\author{Taihei Oki}
\affil{%
  Department of Mathematical Informatics,\\%
  Graduate School of Information Science and Technology,\\%
  The University of Tokyo, Tokyo, 113-8656, Japan\\%
  Email: \{\href{hirai@mist.i.u-tokyo.ac.jp}{\texttt{hirai}},%
           \href{taihei_oki@mist.i.u-tokyo.ac.jp}{\texttt{taihei\_oki}}%
         \}\texttt{@mist.i.u-tokyo.ac.jp}%
}
\begin{document}
\maketitle

\begin{abstract}
  A $k$-submodular function is a generalization of submodular and bisubmodular functions.
  This paper establishes a compact representation for minimizers of a $k$-submodular function by a poset with inconsistent pairs (PIP).
  This is a generalization of Ando--Fujishige's signed poset representation for minimizers of a bisubmodular function.
  We completely characterize the class of PIPs (elementary PIPs) arising from $k$-submodular functions.
  We give algorithms to construct the elementary PIP of minimizers of a $k$-submodular function $f$ for three cases: (i) a minimizing oracle of $f$ is available, (ii) $f$ is network-representable, and (iii) $f$ arises from a Potts energy function.
  Furthermore, we provide an efficient enumeration algorithm for all maximal minimizers of a Potts $k$-submodular function.
  Our results are applicable to obtain all maximal persistent labelings in actual computer vision problems.
  We present experimental results for real vision instances.
  \\
  
  \noindent \textbf{Keywords:} $k$-submodular function, Birkhoff representation theorem, poset with inconsistent pairs (PIP), Potts energy function
\end{abstract}

\section{Introduction}
Minimizers of a submodular function form a distributive lattice, and are compactly represented by a poset (partially ordered set) via Birkhoff representation theorem.
This fact reveals a useful hierarchical structure of the minimizers, and is applied to the DM-decomposition of matrices and further refined block-triangular decompositions~\cite{Murota2000}.

In this paper, we address such a Birkhoff-type representation for minimizers of a \emph{$k$-submodular function}.
Here $k$-submodular functions, introduced by Huber--Kolmogorov~\cite{Huber2012}, are functions on $\setprn{0, 1, 2, \ldots, k}^n$ defined by submodular-type inequalities.
This generalization of (bi)submodular functions has recently gained attention for algorithm design and modeling~\cite{Gridchyn2013, Hirai2015, Hirai2016, IwataS2016, IwataY2016}.

Our main result is to establish a compact representation for minimizers of a $k$-submodular function. 
This can be viewed as a generalization of the above poset representation for submodular functions and Ando--Fujishige's signed poset representation for bisubmodular functions~\cite{Ando1994}. 
A feature of our representation is to utilize a \emph{poset with inconsistent pairs (PIP)}~\cite{Ardila2011, Barthelemy1993, Nielsen1981}, which is a discrete structure having a stronger power of expression than that of a signed poset. 
Actually a PIP is a poset endowed with an additional binary relation (\emph{inconsistency relation}), and is viewed as a poset reformulation of 2-CNF.
This concept, also known as an \emph{event structure}, was first introduced by Nielsen--Plotkin--Winskel~\cite{Nielsen1981} as a model of concurrency in theoretical computer science, and was independently considered by Barthelemy--Constantin~\cite{Barthelemy1993} to establish a Birkhoff-type representation theorem for a \emph{median semilattice}---a semilattice generalization of a distributive lattice.
A PIP was recently rediscovered by Ardila--Owen--Sullivant~\cite{Ardila2011} to represent nonpositively-curved cube complexes; the term ``PIP'' is due to them.

Our results consist of structural and algorithmic ones, summarized as follows:

\paragraph{Structural results.}
We show that minimizers of a $k$-submodular function form a median semilattice (Lemma~3).
By a Birkhoff-type representation theorem~\cite{Barthelemy1993} for median semilattices, the minimizer set is represented by a PIP, where minimizers are encoded into special ideals in the PIP, called \emph{consistent ideals}.
PIPs arising from $k$-submodular functions are rather special.
We completely characterize such PIPs (Theorem~\ref{thm:closed_set_and_elementary_pip}), which we call \emph{elementary}.
This representation is actually compact.
We show that the size of the elementary PIP for a $k$-submodular function of $n$ variables is $\Order{kn}$ (Proposition~\ref{prop:number_of_irreducibles}).

\paragraph{Algorithmic results.} 
We present algorithms to construct the elementary PIP of the minimizers of a $k$-submodular function $f$ under the following three situations:
\begin{itemize}
  \item[(i)] A minimizing oracle of $f$ is given.
  \item[(ii)] $f$ is network-representable.
  \item[(iii)] $f$ arises from a Potts energy function.
\end{itemize}
For (i), we show that the PIP is obtained by calling the minimizing oracle $\Order{kn^2}$ time (Theorem~\ref{thm:alg_oracle}).
Notice that a polynomial time algorithm to minimize $k$-submodular functions is not known for the value-oracle model but is known for the valued-CSP model~\cite{Kolmogorov2015}.
Our result for (i) is applicable to such a case.

For (ii) (and (iii)), we consider a class of efficiently minimizable $k$-submodular functions considered in~\cite{IwataY2016}, where a $k$-submodular function in this class is represented by the cut function in a network of $\Order{kn}$ vertices and can be minimized by a minimum-cut computation.
We show that the PIP is naturally obtained from the residual graph of a maximum flow in the network (Theorems~\ref{thm:network_algo} and~\ref{thm:network_time}). 

For (iii), we deal with a $k$-submodular function $\funcdoms{\tilde{g}}{\setprn{0, 1, 2, \ldots, k}^n}{\setR}$ obtained from a $k$-label Potts energy function $\funcdoms{g}{\setprn{1, 2, \ldots, k}^n}{\setR}$ by adding the $0$-label (meaning ``non-labeled'').
Such a $k$-submodular function, called \emph{Potts $k$-submodular}, is particularly useful in vision applications.
Indeed, via the \emph{persistency property}~\cite{Gridchyn2013, IwataY2016}, a minimizer of $g$ (an optimal labeling) is partly recovered from a minimizer of the relaxation $\tilde{g}$.
Gridchyn--Kolmogorov~\cite{Gridchyn2013} showed that a minimizer of a Potts $k$-submodular function can be obtained by $\Order{\log k}$ calls of a max-flow algorithm performed on a network of $\Order{n}$ vertices. 
We show that the PIP is also obtained in the same time complexity (Theorem~\ref{thm:Potts}).
In showing this result, we reveal an intriguing structure of the PIP for a Potts $k$-submodular function (Theorem~\ref{thm:PIP_Potts}), and utilize results~\cite{Hirai2010, Ibaraki1998} from undirected multiflow theory.

We also discuss enumeration aspects for minimizers.
Maximal minimizers, which are minimizers with a maximum number of nonzero components, are of particular interest from the view of partial optimal labeling.
For a Potts $k$-submodular function, we show that the problem of enumerating all maximal minimizers reduces to the problem of enumerating all ideals of a single poset (Theorem~\ref{thm:enumeration_potts}).
This enables us to use an existing fast enumeration algorithm, and leads to a practical algorithm enumerating all maximal partial optimal labeling in actual computer vision problems. 
We present experimental results for real instances of stereo matching problems. 

\paragraph{Organization.}
The rest of this paper is organized as follows.
In Section~\ref{sec:preliminaries}, we give preliminaries including a Birkhoff-type representation theorem between PIPs and median semilattices.
In Section~\ref{sec:structure}, we prove the above-mentioned structural results.
In Section~\ref{sec:algorithms}, we prove algorithmic results.
Finally, in Section~\ref{sec:application}, we describe applications and present experimental results.

\section{Preliminaries}
\label{sec:preliminaries}

For a nonnegative integer $n$, we denote $\setprn{1, 2, \ldots, n}$ by $\intset{n}$ (with $\intset{0} \defeq \varnothing$).
For a subset $X$ of an ordered set, let $\min X$ denote the minimum element in $X$ (if it exists).
Let $\setR$ be the set of real numbers and $\setRext \defeq \setR \cup \setprn{+\infty}$.
For a function $f$ from a set $D$ to $\setRext$, a \emph{minimizer} of $f$ is an element $x \in D$ that satisfies $\app{f}{x} \leq \app{f}{y}$ for all $y \in D$.
The set of minimizers of $f$ is simply called the \emph{minimizer set} of $f$.
We assume that posets are always finite, and assume the standard notions of lattice theory, such as join $\join$ and meet $\meet$.

\subsection{$k$-submodular function}
Let $k$ be a positive integer.
Let $S_k$ denote $\setprn{0, 1, 2, \ldots, k}$.
The partial order $\preceq$ on $S_k$ is defined by $a \preceq b$ if and only if $a \in \setprn{0, b}$ for each $a, b \in S_k$.
Consider the $n$-product ${S_k}^n$ of $S_k$, where the partial order on ${S_k}^n$ is defined as the direct product of $\preceq$ and is also denoted by $\preceq$.
In this way, ${S_k}^n$ and its subsets are regarded as posets.
For $x = \paren{x_1, x_2, \ldots, x_n} \in {S_k}^n$, the \emph{support} of $x$ is the set of indices $i \in \intset{n}$ with nonzero $x_i$, and is denoted by $\supp x$:
\begin{align*}
  \supp x \defeq \setprnsep{i \in \intset{n}}{x_i \neq 0}.
\end{align*}

A \emph{$k$-submodular function}~\cite{Huber2012} is a function $\funcdoms{f}{{S_k}^n}{\setRext}$ satisfying the following inequalities
\begin{align} \label{neq:k-submodularity}
  \app{f}{x} + \app{f}{y} \geq \app{f}{x \sqmeet y} + \app{f}{x \sqjoin y}
\end{align}
for all $x, y \in {S_k}^n$.
Here the binary operation $\sqmeet$ on ${S_k}^n$ is given by
\begin{align} \label{def:sqmeet}
  \paren{x \sqmeet y}_i \defeq \begin{cases}
    \min \setprn{x_i, y_i} & \text{($x_i$ and $y_i$ are comparable with respect to $\preceq$)}, \\
    0                      & \text{($x_i$ and $y_i$ are incomparable with respect to $\preceq$)},
  \end{cases}
\end{align}
for every $x, y \in {S_k}^n$ and $i \in \intset{n}$.
The operation $\sqjoin$ in \eqref{neq:k-submodularity} is defined by changing $\min$ to $\max$ in \eqref{def:sqmeet}.

Besides its recent introduction, a $k$-submodular function seems to be recognized when Bouchet~\cite{Bouchet1997} introduced \emph{multimatroids}.
Indeed, a $k$-submodular function is a direct generalization of the rank function of a multimatroid, and was suggested by Fujishige~\cite{Fujishige1995} in 1995 as a \emph{multisubmodular function}.

It is not known whether $k$-submodular functions for $k \geq 3$ can be minimized in polynomial time on the standard oracle model.
However, some special classes of $k$-submodular functions are efficiently minimizable.
For example, Kolmogorov--Thapper--\v{Z}ivn\'{y}~\cite{Kolmogorov2015} showed that a sum of low-arity $k$-submodular functions can be minimized in polynomial time, where the \emph{arity} of a function is the number of variables.
A nonnegative combination of binary \emph{basic $k$-submodular functions}, introduced by Iwata--Wahlstr\"{o}m--Yoshida~\cite{IwataY2016}, can be minimized by computing a minimum $\paren{s,t}$-cut on a directed network; see Section~\ref{sec:network}.

A nonempty subset of ${S_k}^n$ is said to be \emph{$\paren{\sqmeet, \sqjoin}$-closed} if it is closed under the operations $\sqmeet$ and $\sqjoin$.
From~\eqref{neq:k-submodularity}, the following obviously holds.

\begin{lemma}
  The minimizer set of a $k$-submodular function is $\paren{\sqmeet, \sqjoin}$-closed.
\end{lemma}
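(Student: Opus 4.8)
The plan is to derive the closure property directly from the defining inequality~\eqref{neq:k-submodularity}, with no auxiliary machinery. First I would let $M$ denote the minimizer set of $f$ and set $\mu \defeq \app{f}{x}$ for any $x \in M$, so that $\mu$ is the minimum value attained by $f$ and $\app{f}{z} \geq \mu$ for every $z \in {S_k}^n$. Since the domain ${S_k}^n$ is finite and we may exclude the degenerate case $f \equiv +\infty$ (where $M = {S_k}^n$ is trivially closed), a global minimizer exists and $M \neq \varnothing$; in the remaining case $\mu \in \setR$. Note also that for any $x, y$ the elements $x \sqmeet y$ and $x \sqjoin y$ defined by~\eqref{def:sqmeet} again lie in ${S_k}^n$, so both sides of~\eqref{neq:k-submodularity} make sense.

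The core step is a short two-sided bound. Taking any $x, y \in M$, both satisfy $\app{f}{x} = \app{f}{y} = \mu$, so substituting into~\eqref{neq:k-submodularity} yields
\begin{align*}
  2\mu = \app{f}{x} + \app{f}{y} \geq \app{f}{x \sqmeet y} + \app{f}{x \sqjoin y}.
\end{align*}
On the other hand, because $x \sqmeet y, x \sqjoin y \in {S_k}^n$ and $\mu$ is the global minimum value, each summand on the right is at least $\mu$, hence the right-hand side is at least $2\mu$. Equality therefore holds throughout, which forces $\app{f}{x \sqmeet y} = \app{f}{x \sqjoin y} = \mu$. Thus both $x \sqmeet y$ and $x \sqjoin y$ are minimizers, i.e.\ they lie in $M$, and since $M$ is nonempty this is exactly the statement that $M$ is $\paren{\sqmeet, \sqjoin}$-closed.

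There is no genuine obstacle here; the only point requiring care is the bookkeeping with the extended value $+\infty$. The inference "$\app{f}{x \sqmeet y} + \app{f}{x \sqjoin y} = 2\mu$ forces each term to equal $\mu$" relies on being able to cancel, which is valid only when $\mu$ is finite. This is precisely why I would isolate the trivial case $\mu = +\infty$ at the outset, after which the argument above applies verbatim.
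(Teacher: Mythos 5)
Your proof is correct and is exactly the two-sided bound that the paper treats as immediate when it states the lemma ``obviously holds'' from~\eqref{neq:k-submodularity}; your extra care with the $+\infty$ case and nonemptiness of the minimizer set is sound but does not change the argument. No further comment is needed.
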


\subsection{Median semilattice and PIP}
A key tool for providing a compact representation for $\paren{\sqmeet, \sqjoin}$-closed sets is a correspondence between median semilattices and PIPs, which was established by Barth\'{e}lemy--Constantin~\cite{Barthelemy1993}.
A recent paper~\cite{Chepoi2012} also contains an exposition of this correspondence.

A \emph{median semilattice}~\cite{Sholander1954} is a meet-semilattice $L = \paren{L, \leq}$ satisfying the following conditions:
\begin{namedenum}{MS}
  \item Every principal ideal is a distributive lattice.
  \item For all $x, y, z \in L$, if $x \join y \wc y \join z$ and $z \join x$ exist, then $x \join y \join z$ exists in $L$.
\end{namedenum}
Note that every distributive lattice is a median semilattice.
An element of $L$ is said to be \emph{join-irreducible} if it is not minimum and is not represented as a join of other elements.
Let $\irreducibles{L}$ denote the set of join-irreducible elements of $L$.

Next we introduce a \emph{poset with inconsistent pairs (PIP)}.
A PIP~\cite{Ardila2011, Barthelemy1993, Nielsen1981} is a poset $P = \paren{P, \leq}$ endowed with an additional symmetric relation $\inconsistent$ satisfying the following conditions:
\begin{namedenum}{IC}
  \item For all $p, q \in P$ with $p \inconsistent q$, there is no $r \in P$ with $p \leq r$ and $q \leq r$.
  \item For all $p, q, p', q' \in P$, if $p' \leq p, q' \leq q$ and $p' \inconsistent q'$, then $p \inconsistent q$.
\end{namedenum}
A PIP is also denoted by a triple $\paren{P, \leq, \inconsistent}$.
The relation $\inconsistent$ is called an \emph{inconsistency relation}.
Each unordered pair $\setprn{p, q}$ of $P$ is called \emph{inconsistent} if $p \inconsistent q$.
Note that every inconsistent pair of $P$ is incomparable.
An inconsistent pair $\setprn{p, q}$ of $P$ is said to be \emph{minimally inconsistent} if $p' \leq p$, $q' \leq q$ and $p' \inconsistent q'$ imply $p = p'$ and $q = q'$ for all $p', q' \in P$.
If $\setprn{p, q}$ is minimally inconsistent, the $p \inconsistent q$ is particularly denoted by $p \mininconsistent q$.
We can easily check the following properties of the minimal inconsistency relation:
\begin{namedenum}{MIC}
  \item For all $p, q \in P$ with $p \mininconsistent q$, there is no $r \in P$ with $p \leq r$ and $q \leq r$.
  \item For all $p, q, p', q' \in P$ with $p' \leq p$ and $q' \leq q$, if $p' \mininconsistent q'$ and $p \mininconsistent q$, then $p' = p$ and $q' = q$.
\end{namedenum}
Actually, PIPs can also be defined as a triple $\paren{P, \leq, \mininconsistent}$, where $\mininconsistent$ is a binary symmetric relation on a poset $P = \paren{P, \leq}$ satisfying the conditions (MIC1) and (MIC2).
In this definition, the inconsistency relation $\inconsistent$ on $P$ is obtained by
\begin{align*}
  \text{$p \inconsistent q$ if and only if there exist $p', q' \in P$ with $p' \leq p$, $q' \leq q$ and $p' \mininconsistent q'$}
\end{align*}
for every $p, q \in P$.
Since both definitions of PIP are equivalent, we will use a convenient one.

For a PIP $P$, an ideal of $P$ is said to be \emph{consistent} if it contains no (minimally) inconsistent pair.
Let $\consideals{P}$ denote the family of consistent ideals of $P$.
Regard $\consideals{P}$ as a poset with respect to the inclusion order $\subseteq$.
\begin{figure}[tb]
  \begin{minipage}[t][2.7cm][t]{0.33\linewidth}
    \centering
    \begin{tikzpicture}[every node/.style={circle, draw, scale=0.8}, x=15pt, y=20pt]
      \node (a) at (0, 0) {};
      \node (b) at (2, 0) {};
      \node (c) at (1, 1) {};
      \node[draw=none, scale=1.0] (p) at (0.4, 1) {$p$};
      \node (d) at (4, 0) {};
      \node (e) at (4, 1) {};
      \node[draw=none, scale=1.0] (q) at (4.6, 1) {$q$};
      \node (f) at (0, 2) {};
      \node (g) at (2.5, 2) {};
      \node[draw=none, scale=1.0] (r) at (2.5, 2.4) {$r$};
      \foreach \u / \v in {a/c, b/c, d/e, c/f, c/g, e/g}
        \draw[->, >=latex'] (\v) -- (\u);
      \draw[line width=0.3mm, dash pattern=on .03mm off .8mm, line cap=round] (c) to (e);
      \draw[line width=0.3mm, dash pattern=on .03mm off .8mm, line cap=round] (e) to (f);
    \end{tikzpicture}
    \subcaption{violating (IC1)}
  \end{minipage}%
  \begin{minipage}[t][2.7cm][t]{0.33\linewidth}
    \centering
    \begin{tikzpicture}[every node/.style={circle, draw, scale=0.8}, x=15pt, y=20pt]
      \node (a) at (0, 0) {};
      \node (b) at (2, 0) {};
      \node (c) at (1, 1) {};
      \node (d) at (4, 0) {};
      \node[draw=none, scale=1.0] (p) at (4.6, 0) {$p$};
      \node (e) at (4, 1) {};
      \node[draw=none, scale=1.0] (qr) at (5.1, 1) {$q=r$};
      \node (f) at (0, 2) {};
      \foreach \u / \v in {a/c, b/c, d/e, c/f}
        \draw[->, >=latex'] (\v) -- (\u);
      \draw[line width=0.3mm, dash pattern=on .03mm off .8mm, line cap=round] (c) to (e);
      \draw[line width=0.3mm, dash pattern=on .03mm off .8mm, line cap=round] (d) to [bend right=40] (e);
      \draw[line width=0.3mm, dash pattern=on .03mm off .8mm, line cap=round] (e) to (f);
    \end{tikzpicture}
    \subcaption{violating (IC1)}
  \end{minipage}%
  \begin{minipage}[t][2.7cm][t]{0.33\linewidth}
    \centering
    \begin{tikzpicture}[every node/.style={circle, draw, scale=0.8}, x=15pt, y=20pt]
      \node (a) at (0, 0) {};
      \node (b) at (2, 0) {};
      \node (c) at (1, 1) {};
      \node (d) at (4, 0) {};
      \node (e) at (4, 1) {};
      \node (f) at (0, 2) {};
      \foreach \u / \v in {a/c, b/c, d/e, c/f}
        \draw[->, >=latex'] (\v) -- (\u);
      \draw[densely dashed] (c) to (e);
      \draw[line width=0.3mm, dash pattern=on .03mm off .8mm, line cap=round] (e) to (f);
    \end{tikzpicture}
    \subcaption{PIP}
  \end{minipage}
  \begin{minipage}[t][2.5cm][t]{0.5\linewidth}
    \centering
    \begin{tikzpicture}[every node/.style={circle, draw, scale=0.8}, x=15pt, y=20pt]
      \node (a) at (0, 0) {};
      \node (b) at (2, 0) {};
      \node[draw=none, scale=1.0] (p') at (1.4, 0) {$p'$};
      \node (c) at (1, 1) {};
      \node[draw=none, scale=1.0] (p') at (0.4, 1) {$p$};
      \node (d) at (4, 0) {};
      \node[draw=none, scale=1.0] (p') at (4.65, 0) {$q'$};
      \node (e) at (4, 1) {};
      \node[draw=none, scale=1.0] (p') at (4.65, 1) {$q$};
      \node (f) at (0, 2) {};
      \foreach \u / \v in {a/c, b/c, d/e, c/f}
        \draw[->, >=latex'] (\v) -- (\u);
      \draw[line width=0.3mm, dash pattern=on .03mm off .8mm, line cap=round] (b) to (d);
    \end{tikzpicture}
    \subcaption{violating (IC2)}
  \end{minipage}%
  \begin{minipage}[t][2.5cm][t]{0.5\linewidth}
    \centering
    \begin{tikzpicture}[every node/.style={circle, draw, scale=0.8}, x=15pt, y=20pt]
      \node (a) at (0, 0) {};
      \node (b) at (2, 0) {};
      \node (c) at (1, 1) {};
      \node (d) at (4, 0) {};
      \node (e) at (4, 1) {};
      \node (f) at (0, 2) {};
      \foreach \u / \v in {a/c, b/c, d/e, c/f}
        \draw[->, >=latex'] (\v) -- (\u);
      \foreach \u / \v in {b/e, d/c, d/f, c/e, e/f}
        \draw[line width=0.3mm, dash pattern=on .03mm off .8mm, line cap=round] (\v) -- (\u);
      \draw[densely dashed] (b) to (d);
    \end{tikzpicture}
    \subcaption{PIP}
  \end{minipage}
  \caption{
    Examples of PIPs and non-PIP structures.
    Solid arrows indicate the orders between elements (drawn from higher elements to lowers).
    Dotted lines and dashed lines indicate the inconsistency relations.
    In (a), (b) and (d), labeled elements indicate where the violations of (IC1) and (IC2) are.
    In (c) and (e), the minimal inconsistency relations are drawn by dashed lines.
  }
  \label{fig:pip}
\end{figure}
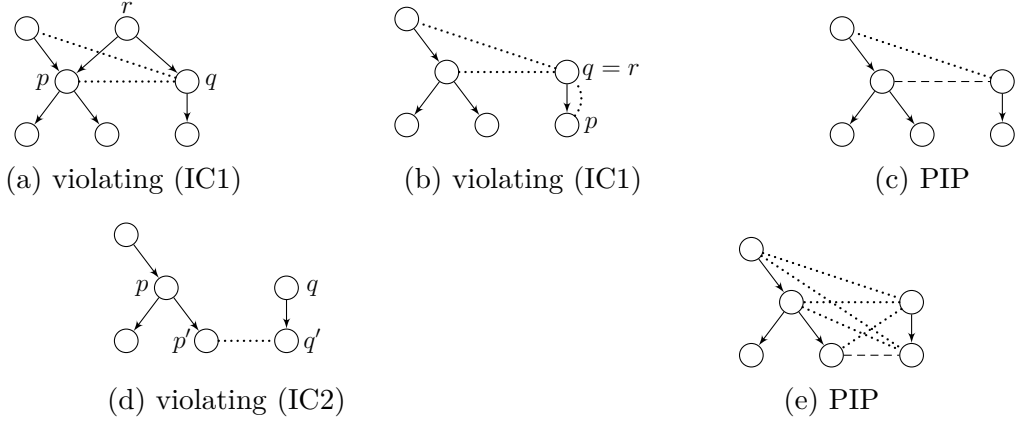
Figure~\ref{fig:pip} shows examples of PIPs and non-PIP structures.

The following theorem establishes a one-to-one correspondence between median semilattices and PIPs.

\begin{theorem}[{\cite[Theorem 2.16]{Barthelemy1993}}] \label{thm:median_semilattice_and_pip}
  \begin{enumerate}
    \item Let $L = \paren{L, \leq}$ be a median semilattice and $\inconsistent$ a symmetric binary relation on $L$ defined by
          \begin{align*}
            \text{$x \inconsistent y$ if and only if $x \join y$ does not exist in $L$}
          \end{align*}
          for every $x, y \in L$.
          Then $\paren{\irreducibles{L}, \leq, \inconsistent}$ forms a PIP with inconsistency relation $\inconsistent$.
          The consistent ideal family $\consideals{\irreducibles{L}}$ is isomorphic to $L$, and an isomorphism is given by $I \mapsto \bigjoin_{x \in I} x$ for $I \neq \varnothing$ and $\varnothing \mapsto \min L$.
    
    \item Let $P$ be a PIP.
          The consistent ideal family $\consideals{P}$ forms a median semilattice.
          The PIP $\paren{\irreducibles{\consideals{P}}, \subseteq, \inconsistent}$ obtained in the same way as (1) is isomorphic to $P$.
  \end{enumerate}
\end{theorem}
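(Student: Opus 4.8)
The statement is a Birkhoff-type duality, so the plan is to treat the two directions largely separately. The common technical ingredient I would isolate \emph{first} is a Helly-type property of median semilattices: a finite subset $F \subseteq L$ has a join in $L$ if and only if every pair of elements of $F$ has a join. One direction is trivial; for the other I would induct on $\absprn{F}$, the three-element case being exactly axiom (MS2). For the inductive step with $F = \setprn{x_1, \ldots, x_m}$, I would note that each triple join $x_i \join x_j \join x_m$ exists by (MS2), so the $(m-1)$-element set $\setprnsep{x_i \join x_m}{i < m}$ is again pairwise-joinable and hence, by induction, has a join, which bounds all of $F$ from above; combined with (MS1) --- which makes each principal ideal a distributive lattice --- a common upper bound of $F$ yields an actual least upper bound inside that principal ideal. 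I expect this lemma to be the main obstacle, since it is the only place where (MS2) does real work and where the non-lattice nature of $L$ must be handled.

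Granting this lemma, I would prove part (1) by verifying the PIP axioms for $\paren{\irreducibles{L}, \leq, \inconsistent}$ directly from the definition that $x \inconsistent y$ exactly when $x \join y$ does not exist: (IC1) holds because a common upper bound $r$ of $p, q$ would force $p \join q$ to exist inside the distributive lattice below $r$; (IC2) holds by contrapositive, since if $p \join q$ exists it is a common upper bound of any $p' \leq p$, $q' \leq q$, so $p' \join q'$ exists. For the map $\consideals{\irreducibles{L}} \to L$, $I \mapsto \bigjoin_{x \in I} x$, I would first check well-definedness: a consistent ideal is pairwise-joinable, so the Helly lemma supplies the join. The candidate inverse sends $w \in L$ to $\setprnsep{y \in \irreducibles{L}}{y \leq w}$, which is a consistent ideal. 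To see these are mutually inverse I would use that in a distributive lattice every join-irreducible below $\bigjoin x_i$ lies below some $x_i$, together with the fact that each element of a median semilattice is the join of the join-irreducibles below it (obtained by applying Birkhoff's theorem inside the principal ideal). Monotonicity in both directions is immediate, giving the order-isomorphism.

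For the converse (2), I would first observe that $\consideals{P}$ is a meet-semilattice under inclusion with meet given by intersection (an intersection of ideals is a consistent ideal), and that the join of two consistent ideals exists exactly when their union is consistent, in which case it equals that union. Axiom (MS1) then reduces to Birkhoff: the consistent ideals contained in a fixed consistent ideal $I$ are precisely the ideals of the subposet $I$, since no inconsistent pair occurs inside $I$, and these form a distributive lattice. Axiom (MS2) is almost immediate from a covering-by-pairs observation: if $I \cup J$, $J \cup K$ and $K \cup I$ are all consistent, then any pair of elements of $I \cup J \cup K$ already lies in one of these three unions and is therefore consistent, so $I \cup J \cup K$ is the required triple join.

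It remains to identify the recovered PIP with $P$. I would show that the join-irreducibles of $\consideals{P}$ are exactly the principal ideals $\downarrow p$ for $p \in P$: each $\downarrow p$ is consistent by (IC1) and is join-irreducible because any expression $\downarrow p = I \cup J$ forces $p$, hence all of $\downarrow p$, into one of $I, J$; conversely a consistent ideal with two or more maximal elements splits as the consistent union of the principal ideals of those maximal elements, so is not join-irreducible. The assignment $p \mapsto \downarrow p$ is then an order-isomorphism onto $\irreducibles{\consideals{P}}$, and it matches the inconsistency relations, because $\downarrow p \join \downarrow q$ fails to exist iff $\downarrow p \cup \downarrow q$ contains an inconsistent pair, which by (IC2) happens iff $p \inconsistent q$. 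As noted, the only genuinely delicate step in the whole argument is the Helly-type lemma underpinning part (1); everything else is a direct verification once the right objects --- principal ideals as join-irreducibles, and the union-description of joins --- are in place.
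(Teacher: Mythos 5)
Your proof is correct, but there is nothing in the paper to compare it against: Theorem~\ref{thm:median_semilattice_and_pip} is quoted from Barth\'elemy--Constantin and the paper supplies no proof, except for a one-line remark deriving the final claim of part~(2) (that $\paren{\irreducibles{\consideals{P}}, \subseteq, \inconsistent} \cong P$) from part~(1) together with the rigidity fact that $\consideals{P} \cong \consideals{P'}$ forces $P \cong P'$. So what you have written is a genuinely self-contained alternative. Your decomposition is the natural one: the pairwise-to-finite Helly property for joins is indeed the only place where (MS2) does real work in part~(1), and your induction (passing from $\setprn{x_1, \ldots, x_m}$ to the pairwise-joinable set $\setprnsep{x_i \join x_m}{i < m}$) is sound. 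Your treatment of the last claim of~(2) --- identifying $\irreducibles{\consideals{P}}$ with the principal ideals of $P$ and checking that $p \mapsto {\downarrow}p$ matches orders and inconsistencies via (IC1)/(IC2) --- is more direct and more informative than the paper's appeal to rigidity, since it exhibits the isomorphism explicitly. Two small steps deserve an explicit sentence in a full write-up: (i) in the Helly lemma, the least upper bound of $F$ computed inside the principal ideal of a common upper bound $u$ is least in all of $L$, because any other upper bound $w$ yields the upper bound $u \meet w$ inside that principal ideal; and (ii) the join-irreducibles of the distributive lattice ${\downarrow}w$ are exactly the join-irreducibles of $L$ lying below $w$ (both inclusions use that joins computed in ${\downarrow}w$ agree with joins in $L$), which is what licenses applying Birkhoff's theorem locally in your verification that the two maps of part~(1) are mutually inverse. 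Neither is a gap, just an elision.
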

The latter part of Theorem~\ref{thm:median_semilattice_and_pip}~(2) is implicit in~\cite{Barthelemy1993}, and follows from Theorem~\ref{thm:median_semilattice_and_pip}~(1) and the fact that for PIPs $P$ and $P'$, if $\consideals{P}$ and $\consideals{P'}$ are isomorphic, then $P$ and $P'$ are also isomorphic~\cite[p.57]{Barthelemy1993}.

\begin{remark} \label{lem:pip_and_cnf}
  A PIP is an alternative expression of a satisfiable Boolean 2-CNF, where consistent ideals correspond to true assignments.
  Indeed, for a PIP $\paren{P, \leq, \inconsistent}$ with $P = \intset{n}$, consider the following 2-CNF of Boolean variables $x_1, x_2, \dots, x_n \in \setprn{0, 1}$:
  \begin{align*}
    \paren{\bigland_\condition{i,j \in P}{i < j} x_i \lor \bar{x}_j}
    \land
    \paren{\bigland_\condition{i,j \in P}{i \inconsistent j} \bar{x}_i \lor \bar{x}_j}.
  \end{align*}
  Then an assignment $\paren{x_1, x_2, \ldots, x_n} \in \setprn{0, 1}^n$ is true if and only if the set of elements $i \in P$ with $x_i = 1$ is a consistent ideal.
  The reverse construction of a PIP from a 2-CNF satisfiable at $\paren{0, 0, \ldots, 0}$ is also easily verified.
\end{remark}

\section{$\paren{\sqmeet, \sqjoin}$-closed set and elementary PIP}
\label{sec:structure}

The starting point for a compact representation for $\paren{\sqmeet, \sqjoin}$-closed sets is the following.
\begin{lemma} \label{lem:closed_set_is_median_semilattice}
  Every $\paren{\sqmeet, \sqjoin}$-closed set is a median semilattice.
\end{lemma}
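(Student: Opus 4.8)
The plan is to verify the three defining properties of a median semilattice---being a meet-semilattice, (MS1), and (MS2)---for an arbitrary $\paren{\sqmeet, \sqjoin}$-closed set $L \subseteq {S_k}^n$. First I would record the basic fact that $\sqmeet$ is nothing but the meet (greatest lower bound) in the poset ${S_k}^n$: in the ``star'' poset $S_k$ the only common lower bound of two distinct nonzero elements is $0$, so the coordinatewise rule \eqref{def:sqmeet} computes exactly the glb. Consequently $L$, being closed under $\sqmeet$, is a sub-meet-semilattice of ${S_k}^n$, hence a meet-semilattice. I would next pin down joins: two elements $x, y \in {S_k}^n$ admit a common upper bound if and only if $x_i$ and $y_i$ are comparable for every $i$ (a nonzero element of $S_k$ has only itself as an upper bound), and in that case $x \sqjoin y$ is their least upper bound. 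Since $L$ is $\sqjoin$-closed, this yields the clean statement: for $x, y \in L$ the join $x \join y$ exists in $L$ precisely when $x, y$ are comparable in every coordinate, and then $x \join y = x \sqjoin y$. The point to keep in mind throughout is that $\sqjoin$ is only a \emph{truncated} join, returning $0$ on incomparable coordinates, so it is the genuine join only in this restricted situation.

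For (MS1), I would describe the principal ideal of $a \in L$. In ${S_k}^n$ the principal ideal of $a$ consists of all $x$ with $x_i \in \setprn{0, a_i}$ for each $i$, which is order-isomorphic to the Boolean lattice $\setprn{0,1}^{\supp a}$; on this ideal every pair of elements is comparable coordinatewise, so $\sqmeet$ and $\sqjoin$ restrict to the Boolean meet and join. The principal ideal of $a$ in $L$ is the intersection of $L$ with this Boolean lattice, and it is closed under $\sqmeet$ and $\sqjoin$. Hence it is a sublattice of a Boolean lattice, and therefore a distributive lattice, which is exactly (MS1).

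For (MS2), I would reduce to a single coordinate. Suppose $x \join y$, $y \join z$, and $z \join x$ all exist in $L$; by the join characterization above, $x,y$, and $y,z$, and $z,x$ are pairwise comparable coordinatewise. Fixing a coordinate $i$, pairwise comparability forces any two nonzero values among $x_i, y_i, z_i$ to coincide, so all nonzero values among them equal a single $c \in S_k$ and $\setprn{x_i, y_i, z_i} \subseteq \setprn{0, c}$ has a common upper bound. As this holds in every coordinate, $x, y, z$ have a common upper bound in ${S_k}^n$, and their least upper bound---the coordinatewise maximum $\paren{x \sqjoin y} \sqjoin z$---lies in $L$ by $\sqjoin$-closure, hence equals $x \join y \join z$ in $L$.

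The coordinatewise verifications are routine; the one genuinely load-bearing observation is the combinatorial fact about the star poset $S_k$ used in (MS2)---that pairwise comparability of three elements forces a common upper bound---together with the care needed to distinguish the true lattice join from the truncated operation $\sqjoin$. Once joins are characterized correctly, (MS1) follows by transferring distributivity from Boolean lattices, and (MS2) follows from the single-coordinate argument.
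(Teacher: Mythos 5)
Your proposal is correct and follows essentially the same route as the paper: identifying $\sqmeet$ with the true meet, characterizing when $\join$ exists and equals $\sqjoin$, proving (MS1) by identifying a principal ideal with a sublattice of a Boolean lattice via supports, and proving (MS2) by the coordinatewise observation that pairwise comparability in the star poset $S_k$ forces a common upper bound. No gaps.
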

\begin{proof}
  Let $M \subseteq {S_k}^n$ be a $\paren{\sqmeet, \sqjoin}$-closed set.
  Then $M$ is a semilattice since ${S_k}^n$ is a semilattice with minimum element $\bm{0} \defeq \paren{0, 0, \ldots, 0} \in {S_k}^n$, and the operator $\sqmeet$ coincides with $\meet$ on ${S_k}^n$.
  We show that $M$ satisfies the conditions (MS1) and (MS2).
  
  (MS1).
  Let $I$ be the principal ideal of $x \in M$.
  For all $y \in I$ and $i \in \intset{n}$, $y_i$ is equal to either 0 or $x_i$.
  Therefore, for all $y, z \in I$, the join $y \join z$ exists and it holds $y \join z = y \sqjoin z \in I$.
  Next let $\funcdoms{\phi}{I}{\Power{\intset{n}}}$ be an injection defined by $\app{\phi}{y} \defeq \supp y$ for every $y \in I$.
  One can easily see that $\app{\phi}{y \meet z} = \app{\phi}{y} \cap \app{\phi}{z}$ and $\app{\phi}{y \join z} = \app{\phi}{y} \cup \app{\phi}{z}$ for every $y, z \in I$.
  In other words, $\phi$ is an isomorphism from $\paren{I, \preceq}$ to $\paren{\app{\phi}{I}, \subseteq}$.
  Since any nonempty subset of $\Power{\intset{n}}$ closed under $\cap$ and $\cup$ is a distributive lattice ordered by inclusion, $I$ is also distributive.
  
  (MS2).
  Let $x, y, z \in M$ be such that the join of any two of them exists in $M$.
  Since $x_i, y_i$ and $z_i$ are comparable for any $i \in \intset{n}$, the join $x \join y \join z$ exists in ${S_k}^n$, and coincides with $x \sqjoin y \sqjoin z$.
  Finally since $M$ is closed under $\sqjoin$, the join $x \join y \join z$ belongs to $M$.
\end{proof}

Let $\inconsistent$ be a symmetric binary relation on ${S_k}^n$ defined by
\begin{align*}
  \text{$x \inconsistent y$ if and only if $x \join y$ does not exist in ${S_k}^n$}
\end{align*}
for every $x, y \in {S_k}^n$.
Note that for every $x, y \in M$, if $x \not\inconsistent y$ then $x \join y$ is equal to $x \sqjoin y$.
From Theorem~\ref{thm:median_semilattice_and_pip}~(1) and Lemma~\ref{lem:closed_set_is_median_semilattice}, we obtain the following.
\begin{theorem} \label{thm:closed_set_and_pip}
  Let $M$ be a $\paren{\sqmeet, \sqjoin}$-closed set.
  Then $\paren{\irreducibles{M}, \preceq, \inconsistent}$ forms a PIP with inconsistency relation $\inconsistent$.
  The consistent ideal family $\consideals{\irreducibles{M}}$ is isomorphic to $M$, and the isomorphism is given by $I \mapsto \bigjoin_{x \in I} x$ for $I \neq \varnothing$ and $\varnothing \mapsto \min M$.
\end{theorem}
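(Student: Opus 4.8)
The plan is to derive the statement directly from Theorem~\ref{thm:median_semilattice_and_pip}~(1) applied to the median semilattice $M$, which is legitimate by Lemma~\ref{lem:closed_set_is_median_semilattice}. The one point that genuinely needs care is a mismatch in how the inconsistency relation is defined: Theorem~\ref{thm:median_semilattice_and_pip}~(1), applied to $L = M$, produces a PIP whose inconsistency relation is the \emph{intrinsic} one on $M$, namely that $x$ and $y$ are inconsistent exactly when their join fails to exist \emph{in $M$}; by contrast, the relation $\inconsistent$ in the present statement is defined by non-existence of the join \emph{in the ambient poset} ${S_k}^n$. So the heart of the proof is to show that these two relations coincide on $M$, and hence on $\irreducibles{M}$. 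Once that is done, the two triples $\paren{\irreducibles{M}, \preceq, \cdot}$ are literally identical, and both the PIP property and the isomorphism $I \mapsto \bigjoin_{x \in I} x$ are inherited verbatim from Theorem~\ref{thm:median_semilattice_and_pip}~(1).

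To compare the two relations I would argue coordinatewise, using that the order on ${S_k}^n$ is the direct product of the order $\preceq$ on $S_k$ and that a join in a product poset exists precisely when it exists in each factor. In $S_k$ two elements $a, b$ have a common upper bound if and only if they are comparable: if both are nonzero and distinct, any upper bound $c$ would force $a = c$ and $b = c$ by the definition of $\preceq$, a contradiction. Consequently, for $x, y \in {S_k}^n$ the join $x \join y$ exists in ${S_k}^n$ if and only if $x_i$ and $y_i$ are comparable for every $i \in \intset{n}$, in which case it equals $x \sqjoin y$.

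With this in hand, both directions follow. If $x \join y$ does not exist in ${S_k}^n$, then some coordinate has $x_i, y_i$ incomparable, so $x$ and $y$ have \emph{no} common upper bound in ${S_k}^n$ at all, and in particular none lying in the subset $M$; hence $x \join y$ does not exist in $M$ either. Conversely, if $x \join y$ exists in ${S_k}^n$, it equals $x \sqjoin y$, which belongs to $M$ because $M$ is $\sqjoin$-closed (this is exactly the note preceding the statement); since the order on $M$ is induced from ${S_k}^n$ and $x \sqjoin y \in M$ is already the least upper bound in the larger poset, it is also the join of $x$ and $y$ within $M$. Thus $x \join y$ exists in ${S_k}^n$ if and only if it exists in $M$, so $\inconsistent$ and the intrinsic relation of the median semilattice $M$ agree on all of $M$.

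Having established the coincidence of the two inconsistency relations, I would conclude by invoking Theorem~\ref{thm:median_semilattice_and_pip}~(1) with $L = M$: it gives immediately that $\paren{\irreducibles{M}, \preceq, \inconsistent}$ is a PIP and that $\consideals{\irreducibles{M}}$ is isomorphic to $M$ under the stated map (with $\varnothing \mapsto \min M$). I expect the main, though modest, obstacle to be the careful verification that a join missing in ${S_k}^n$ is also missing in $M$, that is, that passing to the subsemilattice $M$ creates no new joins; this is precisely what the coordinatewise comparability analysis in $S_k$ secures, and everything else is a direct appeal to the already-proved correspondence.
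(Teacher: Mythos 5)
Your proof is correct and takes essentially the same route as the paper, which obtains the theorem directly from Theorem~\ref{thm:median_semilattice_and_pip}~(1) and Lemma~\ref{lem:closed_set_is_median_semilattice}. The coordinatewise check that the ambient relation $\inconsistent$ on ${S_k}^n$ agrees with the intrinsic ``join fails to exist in $M$'' relation is exactly the detail the paper leaves implicit in the remark preceding the theorem, and you have supplied it correctly.
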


\begin{figure}[t]
  \begin{minipage}[t][3cm][t]{0.5\linewidth}
    \centering
    \begin{tikzpicture}[every node/.style={rectangle, draw, scale=0.8}, x=40pt, y=20pt]
      \node [] (000) at ( 0,   0) {00002};
      \node [double]  (100) at (-1,   1) {13002};
      \node [double]  (001) at ( 1,   1) {00012};
      \node [double]  (110) at (-1.5, 2) {13102};
      \node [] (101) at (-0.5, 2) {13012};
      \node [double]  (120) at ( 0.5, 2) {13202};
      \node [double]  (031) at ( 1.5, 2) {00312};
      \node [] (111) at (-1.5, 3) {13112};
      \node [] (121) at (-0.5, 3) {13212};
      \node [] (131) at ( 0.5, 3) {13312};
      \node [double]  (231) at ( 1.5, 3) {22312};
      \foreach \u / \v in {
        000/100, 000/001, 100/110, 100/101, 100/120, 001/101, 001/031,
        110/111, 101/111, 101/121, 101/131, 120/121, 031/131, 031/231}
        \draw[->, >=latex'] (\v) -- (\u);
    \end{tikzpicture}
    \subcaption{a $(\sqmeet, \sqjoin)$-closed set on ${S_3}^5$}
  \end{minipage}%
  \begin{minipage}[t][3cm][t]{0.5\linewidth}
    \centering
    \begin{tikzpicture}[every node/.style={rectangle, draw, double, scale=0.8}, x=35pt, y=25pt]
      \node (100) at (-1,   0.5) {13002};
      \node (110) at (-1.5, 1.5) {13102};
      \node (120) at (-0.5, 1.5) {13202};
      \node (001) at ( 1,   0)   {00012};
      \node (031) at ( 1,   1)   {00312};
      \node (231) at ( 1,   2)   {22312};
      \foreach \u / \v in {100/110, 100/120, 001/031, 031/231}
        \draw[->, >=latex'] (\v) -- (\u);
      \foreach \u / \v in {110/120, 120/031, 100/231}
        \draw[densely dashed] (\v) -- (\u);
      \draw[densely dashed] (110) to [bend right=17] (031);
    \end{tikzpicture}
    \subcaption{the PIP corresponding to (a)}
  \end{minipage}
  \caption{
    Example of a $(\sqmeet, \sqjoin)$-closed set and the corresponding PIP.
    In (a), elements surrounded by double-lined frames are join-irreducible.
    In (b), non-minimal inconsistency relations are not drawn.
  }
  \label{fig:example_of_closed_set_and_pip}
\end{figure}
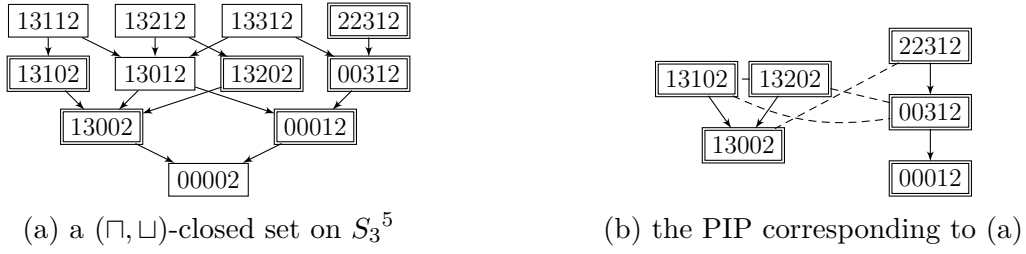

Figure~\ref{fig:example_of_closed_set_and_pip} shows an example of a $(\sqmeet, \sqjoin)$-closed set and the corresponding PIP.

From Theorem~\ref{thm:closed_set_and_pip}, it will turn out that the set $\irreducibles{M}$ of join-irreducible elements of every $\paren{\sqmeet, \sqjoin}$-closed set $M$ does not lose any information about the structure of $M$.
That is, non-minimum elements in $M$ can be obtained as the join of one or more join-irreducible elements of $M$ (notice that we cannot obtain the minimum element of $M$ in this way).
Therefore we call $\irreducibles{M}$ a \emph{PIP-representation} of $M$.
Furthermore, the following proposition, which will be proved in Section~\ref{sec:proof_of_theorem_1}, says that this representation is actually compact.
\begin{proposition} \label{prop:number_of_irreducibles}
  Let $M$ be a $\paren{\sqmeet, \sqjoin}$-closed set on ${S_k}^n$.
  The number of join-irreducible elements of $M$ is at most $kn$.
\end{proposition}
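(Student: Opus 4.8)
The plan is to build a surjection onto a family of $kn$ distinguished elements of $M$ that contains every join-irreducible. For each coordinate $i \in \intset{n}$ and each nonzero value $c \in \setprn{1, 2, \ldots, k}$, I would look at the subfamily $M_{i,c} \defeq \setprnsep{x \in M}{x_i = c}$ and, when it is nonempty, at its minimum element $m_{i,c}$. The first point to establish is that this minimum exists and lies in $M$: if $x_i = y_i = c$, then $x_i$ and $y_i$ are comparable (indeed equal), so $\paren{x \sqmeet y}_i = \min \setprn{c, c} = c$; hence $M_{i,c}$ is closed under $\sqmeet = \meet$, and since $M$ is finite and $\paren{\sqmeet, \sqjoin}$-closed, the meet of all elements of $M_{i,c}$ belongs to $M_{i,c}$ and is its minimum $m_{i,c}$. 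As there are at most $kn$ pairs $(i,c)$, it then suffices to prove that every join-irreducible element of $M$ equals $m_{i,c}$ for some such pair.

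The core of the argument uses the unique lower cover of a join-irreducible. By Lemma~\ref{lem:closed_set_is_median_semilattice}, the principal ideal $\setprnsep{y \in M}{y \preceq p}$ of a join-irreducible $p$ is a finite distributive lattice in which $p$ is the top and is join-irreducible; hence $p$ has a unique lower cover $p^-$, and moreover every element of $M$ strictly below $p$ is $\preceq p^-$ (otherwise $p$ would be the join of elements strictly below it). Since $p^- \prec p$, there is a coordinate $i$ with $p^-_i = 0$ and $p_i = c \neq 0$: on the principal ideal the support map is an order isomorphism, so $p$ and $p^-$ differ exactly on the coordinates that $p^-$ drops to $0$, and this set is nonempty because $p^- \neq p$. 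I fix such an $i$ and claim $p = m_{i,c}$.

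To prove the claim, note $p \in M_{i,c}$, so $m_{i,c} \preceq p$. If $m_{i,c} \neq p$, then $m_{i,c} \prec p$ strictly, whence $m_{i,c} \preceq p^-$ by the unique-lower-cover property; but then the $i$-th coordinate satisfies $\paren{m_{i,c}}_i \preceq p^-_i = 0$, forcing $\paren{m_{i,c}}_i = 0 \neq c$ and contradicting $m_{i,c} \in M_{i,c}$. Hence $p = m_{i,c}$. Consequently the assignment $(i,c) \mapsto m_{i,c}$, defined on the at most $kn$ pairs for which $M_{i,c} \neq \varnothing$, has image containing $\irreducibles{M}$, and therefore $\absprn{\irreducibles{M}} \leq kn$.

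The step I expect to be the main obstacle is justifying the unique lower cover cleanly, namely that join-irreducibility of $p$ in $M$ coincides with join-irreducibility in the distributive principal ideal $\setprnsep{y \in M}{y \preceq p}$, and that this simultaneously yields a unique $p^-$ and the property that every element below $p$ lies below $p^-$. This is exactly where the median-semilattice structure (through the distributivity of principal ideals in Lemma~\ref{lem:closed_set_is_median_semilattice}) is doing the real work; everything else is routine bookkeeping with the componentwise definition of $\sqmeet$.
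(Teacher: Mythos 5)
Your proof is correct and follows essentially the same route as the paper: the paper's Lemma~\ref{lem:differential} shows that every join-irreducible element $x$ has a coordinate $i$ where it differs from its unique lower cover (a nonzero ``differential'' entry $\alpha$) and that $x$ then equals $\min \partfix{M}{i}{\alpha}$, which is exactly your identification $p = m_{i,c}$, giving the same count of at most $kn$. The only cosmetic difference is that the paper first reduces to the simple case $\min M = \bm{0}$ and packages the argument via the injective differential map $x \mapsto \bar{x}$, whereas you argue directly on $M$; both are fine.
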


Theorem~\ref{thm:closed_set_and_pip} states that any $\paren{\sqmeet, \sqjoin}$-closed set can be represented by a PIP.
However, not all PIPs correspond to some $\paren{\sqmeet, \sqjoin}$-closed sets.
A natural question then arises: \textit{What class of PIPs represents $\paren{\sqmeet, \sqjoin}$-closed sets?}
The main result (Theorem~\ref{thm:closed_set_and_elementary_pip}) of this section answers this question.

\begin{definition} \label{def:elementary}
  A PIP $P = \paren{P, \leq}$ is called \emph{elementary} if it satisfies the following conditions:
  \begin{namedenum}{EP}
    \item[(EP0)] $P$ is the disjoint union of $P_1, P_2, \ldots, P_n$ such that every pair $\setprn{x, y} \subseteq P$ of distinct elements is minimally inconsistent if and only if $\setprn{x, y} \subseteq P_i$ for some $i \in \intset{n}$.
    \item[(EP1)] For any distinct $i, j \in \intset{n}$, if $\absprn{P_i} \geq 2$ and $P_j = \setprn{y}$, there is no element $x \in P_i$ with $x < y$.
    \item[(EP2)] For any distinct $i, j \in \intset{n}$, if $\absprn{P_i} \geq 2$ and $\absprn{P_j} \geq 2$, either of the following two holds:
    \begin{namedenum}{EP}
      \item[(EP2-1)] Every pair of $x \in P_i$ and $y \in P_j$ is not comparable.
      \item[(EP2-2)] There exist $x^\circ \in P_i$ and $y^\circ \in P_j$ such that $x^\circ < y$ and $y^\circ < x$ for all $x \in P_i \setminus \setprn{x^\circ}$ and $y \in P_j \setminus \setprn{y^\circ}$.
    \end{namedenum}
  \end{namedenum}
\end{definition}

\begin{figure}[t]
  \begin{minipage}[t][1.9cm][t]{0.25\linewidth}
    \centering
    \begin{tikzpicture}[every node/.style={circle, draw, scale=0.8}, x=20pt, y=20pt]
      \node (01) at (0, 0) {};
      \node (02) at (1, 0) {};
      \node (03) at (2, 0) {};
      \node[fill] (10) at (0, 1) {};
      \draw[->, >=latex'] (10) -- (01);
      \draw[densely dashed] (01) -- (02);
      \draw[densely dashed] (02) -- (03);
      \draw[densely dashed] (01.south east) to [bend right=25] (03.south west);
    \end{tikzpicture}
    \subcaption{violating (EP1)}
  \end{minipage}%
  \begin{minipage}[t][1.9cm][t]{0.25\linewidth}
    \centering
    \begin{tikzpicture}[every node/.style={circle, draw, scale=0.8}, x=20pt, y=20pt]
      \node (01) at (0, 1) {};
      \node (02) at (1, 1) {};
      \node (03) at (2, 1) {};
      \node[fill] (10) at (0, 0) {};
      \draw[->, >=latex'] (01) -- (10);
      \draw[densely dashed] (01) -- (02);
      \draw[densely dashed] (02) -- (03);
      \draw[densely dashed] (01.north east) to [bend left=25] (03.north west);
    \end{tikzpicture}
    \subcaption{elementary}
  \end{minipage}%
  \begin{minipage}[t][1.9cm][t]{0.25\linewidth}
    \centering
    \begin{tikzpicture}[every node/.style={circle, draw, scale=0.8}, x=20pt, y=20pt]
      \node (10) at (0, 0) {};
      \node (20) at (1, 0) {};
      \node (30) at (2, 0) {};
      \node[fill] (01) at (0, 1) {};
      \node[fill] (02) at (1, 1) {};
      \node[fill] (03) at (2, 1) {};
      \draw[->, >=latex'] (01) -- (10);
      \draw[->, >=latex'] (02) -- (20);
      \draw[->, >=latex'] (03) -- (30);
      \draw[densely dashed] (01) -- (02);
      \draw[densely dashed] (02) -- (03);
      \draw[densely dashed] (01.north east) to [bend left=25] (03.north west);
      \draw[densely dashed] (10) -- (20);
      \draw[densely dashed] (20) -- (30);
      \draw[densely dashed] (10.south east) to [bend right=25] (30.south west);
    \end{tikzpicture}
    \subcaption{violating (EP2)}
  \end{minipage}%
  \begin{minipage}[t][1.9cm][t]{0.25\linewidth}
    \centering
    \begin{tikzpicture}[every node/.style={circle, draw, scale=0.8}, x=20pt, y=20pt]
      \node (10) at ( 0,   0) {};
      \node[draw=none, scale=1.0] (x) at (-0.5, 0.05) {$x^\circ$};
      \node[fill] (02) at (-0.5, 1) {};
      \node[fill] (03) at ( 0.5, 1) {};
      \node[fill] (01) at ( 2,   0) {};
      \node[draw=none, scale=1.0] (x) at (2.5, 0.05) {$y^\circ$};
      \node (20) at ( 1.5, 1) {};
      \node (30) at ( 2.5, 1) {};
      \draw[->, >=latex'] (02) -- (10);
      \draw[->, >=latex'] (03) -- (10);
      \draw[->, >=latex'] (20) -- (01);
      \draw[->, >=latex'] (30) -- (01);
      \draw[densely dashed] (01) -- (03);
      \draw[densely dashed] (02) -- (03);
      \draw[densely dashed] (02) to [bend right=10] (01);
      \draw[densely dashed] (10) -- (20);
      \draw[densely dashed] (20) -- (30);
      \draw[densely dashed] (10) to [bend right=10] (30);
    \end{tikzpicture}
    \subcaption{elementary}
  \end{minipage}
  \caption{
    Examples of elementary PIPs and non-elementary PIPs.
    In all diagrams, the drawn PIPs satisfy the condition (EP0) with $n = 2$.
    Each element is filled or not filled according to the corresponding part $P_i$.
    Non-minimal inconsistency relations are not drawn in each diagram.
  }
  \label{fig:example_of_elementary_pip}
\end{figure}
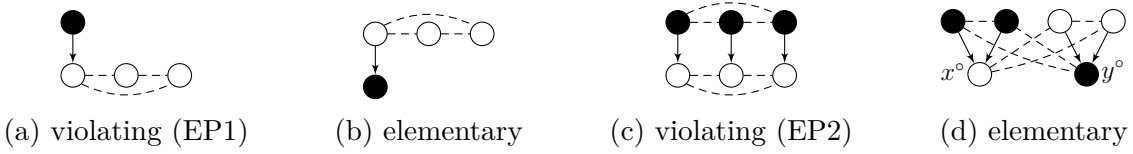

Figure~\ref{fig:example_of_elementary_pip} shows examples of elementary PIPs and non-elementary PIPs.

\begin{theorem} \label{thm:closed_set_and_elementary_pip}
  \begin{enumerate}
    \item For every $\paren{\sqmeet, \sqjoin}$-closed set $M$, the PIP $\paren{\irreducibles{M}, \preceq, \inconsistent}$ is elementary.
    \item For every elementary PIP $P$, there is a $\paren{\sqmeet, \sqjoin}$-closed set $M$ isomorphic to $\consideals{P}$.
  \end{enumerate}
\end{theorem}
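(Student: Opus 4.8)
The plan is to base both directions on one structural fact about join-irreducible elements. For part~(1), fix a $(\sqmeet,\sqjoin)$-closed set $M$ and a join-irreducible $x\in M$. Since the principal ideal $\mathord\downarrow x$ is a distributive lattice by (MS1), $x$ has a unique lower cover $x^-$, and every element strictly below $x$ lies below $x^-$; set $D(x)\defeq\supp x\setminus\supp x^-$. Inside $\mathord\downarrow x$ the coordinates of $D(x)$ are all-or-nothing: each $y\preceq x$ either equals $x$ or vanishes on all of $D(x)$. The key lemma, which I will call the \emph{blocking lemma}, globalizes this: for every $y\in M$, if $y_c=x_c$ for even a single $c\in D(x)$, then $x\preceq y$ (and hence $y$ agrees with $x$ on all of $\supp x$). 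It is proved by applying the all-or-nothing property to $y\sqmeet x\preceq x$, whose $c$-th coordinate is still $x_c\neq 0$. I would also record the dual: for a coordinate $c$ and value $a\neq 0$, the minimum $m^a_c$ of $\setprnsep{z\in M}{z_c=a}$ exists by $\sqmeet$-closedness, is join-irreducible, and has $c\in D(m^a_c)$; thus every join-irreducible is some $m^a_c$, and distinct join-irreducibles determine distinct pairs $(c,a)$ with $c\in D(\cdot)$.

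Next I would prove that the blocks $D(x)$ are pairwise equal or disjoint. If $c\in D(x)\cap D(y)$ with $x\neq y$, the blocking lemma forces $x_c\neq y_c$; and if some $d\in D(x)\setminus D(y)$ existed, then $y^-_d=y_d\neq0$ and $x_d\neq y_d$, so $u\defeq x\sqjoin y^-\in M$ would satisfy $u_c=x_c$ but $u_d=0$, contradicting the blocking lemma. Grouping $\irreducibles M$ by the value of $D(\cdot)$ therefore yields the partition demanded by (EP0), and the same two constructions give $x\mininconsistent y\iff D(x)=D(y)$ for $x\neq y$, with the inconsistency coordinates of such a pair being exactly the common block. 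For the order axioms I translate comparabilities into block language: $x\prec y$ with $D(x)=B$, $D(y)=B'$ occurs precisely when $y$ restricted to $B$ equals $x$ restricted to $B$, and since distinct elements of one part are inconsistent, $y$ lies above at most one element of each part. (EP1) then says a singleton part $\setprn{y}$ cannot sit above an element $x$ of a larger part: assuming it does, a second element $x'$ of $x$'s part must vanish on $y$'s block, so $y\sqjoin x'$ is $\succeq y\succeq x$ by the blocking lemma yet vanishes on $x$'s block, a contradiction. (EP2) is the delicate case, where $\sqmeet$- and $\sqjoin$-closedness must be used to show that the ``lies above'' correspondence between two large parts is constant off a single exceptional element on each side --- exactly the $x^\circ,y^\circ$ pattern of (EP2-2) --- the remaining alternative being (EP2-1).

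For part~(2), given an elementary PIP $P=P_1\sqcup\dots\sqcup P_n$, put $k\defeq\max_i\absprn{P_i}$, enumerate $P_i=\setprn{p_{i,1},\dots,p_{i,\absprn{P_i}}}$, and for a consistent ideal $I$ define $x^I\in{S_k}^n$ by $x^I_j\defeq b$ if $I\cap P_j=\setprn{p_{j,b}}$ and $x^I_j\defeq 0$ if $I\cap P_j=\varnothing$; this is well defined because a consistent ideal meets each part in at most one element. Let $M\defeq\setprnsep{x^I}{I\in\consideals P}$. Because the coordinates merely record which element of each part is present, $I\mapsto x^I$ is at once an order isomorphism onto $M$, and $M$ is $\sqmeet$-closed with $x^I\sqmeet x^J=x^{I\cap J}$; neither fact needs the (EP) axioms. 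The entire content is $\sqjoin$-closedness: one must show the coordinatewise $w\defeq x^I\sqjoin x^J$ equals $x^L$ for the forced set $L\defeq\setprnsep{p_{j,w_j}}{w_j\neq0}$, i.e.\ that $L$ is a consistent ideal. Consistency reduces to $L$ being an ideal (a mixed inconsistent pair in $L$ would drag its witnessing minimal pair into a part on which $w$ vanishes, hence which $L$ omits). That $L$ is downward closed is exactly where (EP1) and (EP2) enter: if $p_{j',c}\prec p_{j,b}\in L$ but $w_{j'}\neq c$, then $I$ or $J$, say $J$, contains a different element $p_{j',c'}$ of $P_{j'}$; (EP1) excludes this when $\absprn{P_j}=1$, and when $\absprn{P_j}\geq2$ the (EP2-2) structure puts the special $y^\circ\in P_j$ below $p_{j',c'}$, hence into $J$, contradicting $w_j=b$.

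The step I expect to be the main obstacle is (EP2) in part~(1). Whereas (EP0) and (EP1) follow from one or two applications of the blocking lemma, the $x^\circ,y^\circ$ dichotomy requires controlling all comparabilities between two large parts simultaneously, and the argument appears to need a careful case analysis of how the admissible patterns on one block are forced by those on the other under repeated use of $\sqmeet$/$\sqjoin$-closedness. The dual bookkeeping --- matching the exact form of (EP1)/(EP2) against the ideal property of $L$ in part~(2) --- is the most error-prone part of the converse.
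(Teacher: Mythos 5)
Your route is essentially the paper's. Your ``blocking lemma'' and its dual are Lemma~\ref{lem:differential}~(1)--(3) in disguise (with $D(x)=\supp\bar{x}$ for the differential $\bar{x}$), your equal-or-disjoint claim for blocks is the key step inside the proof of Lemma~\ref{lem:normalized_is_general} (the paper then contracts each block to a single coordinate and works with a \emph{normalized} set, whereas you keep the blocks; this is an inessential difference), and your part~(2) is Lemma~\ref{lem:consistent_ideal_family_of_P_is_closed} almost verbatim, including the way (EP1)/(EP2-2) force the ideal property of $L$. Two small repairs are needed in part~(1): in the equal-or-disjoint argument you assume $y^-_d=y_d\neq 0$ and $x_d\neq y_d$ for $d\in D(x)\setminus D(y)$, but $d\notin D(y)$ also allows $y_d=0$ (then apply the blocking lemma at $d$ to $x\sqjoin y$, which lies in $M_{d,x_d}$ yet has $c$-th coordinate $0$), and $x_d=y_d$ must be excluded separately (it would give $x\preceq y^-$, killing $x_c$); in (EP1), the assertion that a second element $x'$ of $x$'s part ``must vanish on $y$'s block'' is itself a consequence of the singleton hypothesis (if $x'_d=y_d$ then $y\preceq x'$ forces $x\prec x'$ against incomparability; if $0\neq x'_d\neq y_d$ then $\min M_{d,x'_d}$ is a second join-irreducible in $y$'s part).

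The genuine gap is (EP2), which you leave open, and with it the minimality half of (EP0): that two distinct elements of one block form a \emph{minimally} inconsistent pair also needs (EP2), since a strictly smaller inconsistent pair beneath them would have to live in a single different part, and ruling that out is exactly an (EP2) configuration. The paper closes this with one auxiliary property~($*$), which is the uniform strengthening of your (EP1) argument that your sketch is missing: if $x'\in J_i$ and $y\in J_j$ satisfy $x'\prec y$, then \emph{every} $x\in J_i\setminus\setprn{x'}$ admits some $y'\in J_j\setminus\setprn{y}$ with $y'\prec x$. Its proof uses only your tools: since $x_i\neq x'_i=y_i$ are nonzero, $\paren{x\sqjoin y}_i=0$, so $y\not\preceq x\sqjoin y$; hence $x_j=0$ or $x_j=y_j$ is impossible (either would put $x\sqjoin y$ into $M_{j,y_j}$ and thus above $y=\min M_{j,y_j}$), so $0\neq x_j\neq y_j$ and $y'\defeq\min M_{j,x_j}$ is a join-irreducible of $J_j$ distinct from $y$ and below $x$. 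Given~($*$), (EP2-2) follows in a few lines: for $x,z\in J_i\setminus\setprn{x'}$ the witnesses $y'\prec x$ and $y''\prec z$ must coincide, since otherwise applying~($*$) with the roles of $i$ and $j$ swapped to the relation $y'\prec x$ produces $x''\in J_i$ with $x''\prec y''\prec z$, contradicting the incomparability of the inconsistent pair $\setprn{x'',z}$; the common witness is the required $y^\circ$, and $x^\circ$ is obtained symmetrically. Without isolating~($*$), the ``careful case analysis'' you anticipate has no clear endpoint, so this is the missing idea rather than a routine verification.
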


An elementary PIP corresponds to a $\paren{\sqmeet, \sqjoin}$-closed set on the product of the most ``elementary'' median semilattice $S_k$, whereas general PIP can represent an arbitrary median semilattice (by Theorem~\ref{thm:median_semilattice_and_pip}).
This is why we use the term ``elementary.''

\begin{remark}
  Consider an elementary PIP $P$ with the property that each $P_i$ has the cardinality at most 2.
  Such a PIP arises from $\paren{\sqmeet, \sqjoin}$-closed sets on ${S_2}^n$.
  If we assign a sign $+,-$ to each element so that two nodes in $P_i$ have a different sign, then the PIP is equivalently transformed into a \emph{signed poset}~\cite{Reiner1993}, which is a certain ``acyclic and transitive'' bidirected graph and is used by Ando--Fujishige~\cite{Ando1994} for representing $\paren{\sqmeet, \sqjoin}$-closed sets in ${S_2}^n$.
  Then \emph{ideals} in the signed poset correspond to consistent ideals in the original PIP.
  In the transformation, elements in the signed poset are nonempty members in $P_1, P_2, \ldots, P_n$.
  Bidirected edges are given according to an appropriate rule; one can guess the rule from the example in Figure~\ref{fig:example_of_signed_poset}. (In this figure, we omit redundant edges derived from the transitive closure.)
  In this way, one can see that the PIP-representation for $\paren{\sqmeet, \sqjoin}$-closed sets on ${S_2}^n$ is equivalent to the one by Ando--Fujishige~\cite{Ando1994}.

  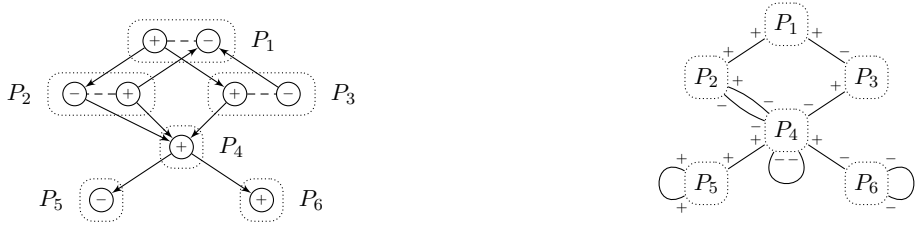
\begin{figure}[t]
    \begin{minipage}[t][3.4cm][t]{0.5\linewidth}
      \centering
      \begin{tikzpicture}[every node/.style={circle, draw, scale=0.8}, x=10pt, y=10pt]
        \node[label={[label distance=-15pt]\tiny $+$}] (1p) at (-1,  0) {};
        \node[label={[label distance=-15pt]\tiny $-$}] (1n) at ( 1,  0) {};
        \node[draw, rectangle, rounded corners=5pt, minimum width=50pt, minimum height=20pt, densely dotted, label=right:{\small $P_1$}] at (0, 0) {};
        \draw[densely dashed] (1p) -- (1n);
        \node[label={[label distance=-15pt]\tiny $+$}] (2p) at (-2, -2) {};
        \node[label={[label distance=-15pt]\tiny $-$}] (2n) at (-4, -2) {};
        \node[draw, rectangle, rounded corners=5pt, minimum width=50pt, minimum height=20pt, densely dotted, label=left:{\small $P_2$}] at (-3, -2) {};
        \draw[densely dashed] (2p) -- (2n);
        \node[label={[label distance=-15pt]\tiny $+$}] (3p) at ( 2, -2) {};
        \node[label={[label distance=-15pt]\tiny $-$}] (3n) at ( 4, -2) {};
        \node[draw, rectangle, rounded corners=5pt, minimum width=50pt, minimum height=20pt, densely dotted, label=right:{\small $P_3$}] at (3, -2) {};
        \node[label={[label distance=-15pt]\tiny $+$}] (4)  at ( 0, -4) {};
        \node[draw, rectangle, rounded corners=5pt, minimum width=20pt, minimum height=20pt, densely dotted, label=right:{\small $P_4$}] at (0, -4) {};
        \draw[densely dashed] (3p) -- (3n);
        \node[label={[label distance=-15pt]\tiny $-$}] (5)  at (-3, -6) {};
        \node[draw, rectangle, rounded corners=5pt, minimum width=20pt, minimum height=20pt, densely dotted, label=left:{\small $P_5$}] at (-3, -6) {};
        \node[label={[label distance=-15pt]\tiny $+$}] (6)  at ( 3, -6) {};
        \node[draw, rectangle, rounded corners=5pt, minimum width=20pt, minimum height=20pt, densely dotted, label=right:{\small $P_6$}] at (3, -6) {};
        \draw[->, >=latex'] (1p) -- (2n);
        \draw[->, >=latex'] (1p) -- (3p);
        \draw[->, >=latex'] (2p) -- (1n);
        \draw[->, >=latex'] (3n) -- (1n);
        \draw[->, >=latex'] (2p) -- (4);
        \draw[->, >=latex'] (2n) -- (4);
        \draw[->, >=latex'] (3p) -- (4);
        \draw[->, >=latex'] (4) -- (5);
        \draw[->, >=latex'] (4) -- (6);
      \end{tikzpicture}
      \subcaption{an elementary PIP with assigned signs}
    \end{minipage}%
    \begin{minipage}[t][3.4cm][t]{0.5\linewidth}
      \centering
      \begin{tikzpicture}[every node/.style={circle, draw, scale=0.8}, x=10pt, y=10pt]
        \node[draw, rectangle, rounded corners=5pt, minimum width=20pt, minimum height=20pt, densely dotted] (1) at (0, 0) {\small $P_1$};
        \node[draw, rectangle, rounded corners=5pt, minimum width=20pt, minimum height=20pt, densely dotted] (2) at (-3, -2) {\small $P_2$};
        \node[draw, rectangle, rounded corners=5pt, minimum width=20pt, minimum height=20pt, densely dotted] (3) at ( 3, -2) {\small $P_3$};
        \node[draw, rectangle, rounded corners=5pt, minimum width=20pt, minimum height=20pt, densely dotted] (4) at ( 0, -4) {\small $P_4$};
        \node[draw, rectangle, rounded corners=5pt, minimum width=20pt, minimum height=20pt, densely dotted] (5) at (-3, -6) {\small $P_5$};
        \node[draw, rectangle, rounded corners=5pt, minimum width=20pt, minimum height=20pt, densely dotted] (6) at ( 3, -6) {\small $P_6$};
        \draw[font=\tiny] (1) to node [draw=none, very near start, xshift=-2pt, yshift=4pt] {$+$} node [draw=none, very near end, xshift=-2pt, yshift=4pt] {$+$} (2);
        \draw[font=\tiny] (1) to node [draw=none, very near start, xshift=2pt, yshift=4pt] {$+$} node [draw=none, very near end, xshift=2pt, yshift=4pt] {$-$} (3);
        \draw[shorten >= -2pt, font=\tiny, bend left=10] (2) to node [draw=none, very near start, xshift=2pt, yshift=4pt] {$+$} node [draw=none, very near end, xshift=4pt, yshift=2pt] {$-$} (4);
        \draw[shorten <= -2pt, font=\tiny, bend right=10] (2) to node [draw=none, very near start, xshift=-5pt, yshift=-2pt] {$-$} node [draw=none, very near end, xshift=-1.5pt, yshift=-4pt] {$-$} (4);
        \draw[font=\tiny] (3) to node [draw=none, very near start, xshift=-2pt, yshift=4pt] {$+$} node [draw=none, very near end, xshift=-2pt, yshift=4pt] {$-$} (4);
        \draw[font=\tiny] (4) to node [draw=none, very near start, xshift=-2pt, yshift=4pt] {$+$} node [draw=none, very near end, xshift=-2pt, yshift=4pt] {$+$} (5);
        \draw[font=\tiny] (4) to node [draw=none, very near start, xshift=2pt, yshift=4pt] {$+$} node [draw=none, very near end, xshift=2pt, yshift=4pt] {$-$} (6);
        \draw[font=\tiny] (4) to [loop, in=240, out=300, looseness=5] node [draw=none, very near start, xshift=-5pt, yshift=3pt] {$-$} node [draw=none, very near end, xshift=5pt, yshift=3pt] {$-$} (4);
        \draw[font=\tiny] (5) to [loop, in=150, out=210, looseness=4] node [draw=none, very near start, xshift=4pt, yshift=-4pt] {$+$} node [draw=none, very near end, xshift=4pt, yshift=4pt] {$+$} (5);
        \draw[font=\tiny] (6) to [loop, in=-30, out=30, looseness=4] node [draw=none, very near start, xshift=-4pt, yshift=4pt] {$-$} node [draw=none, very near end, xshift=-4pt, yshift=-4pt] {$-$} (6);
      \end{tikzpicture}
      \subcaption{the singed poset corresponding to (a)}
    \end{minipage}
    \caption{Example of an elementary PIP on ${S_2}^n$ and the corresponding signed poset.}
    \label{fig:example_of_signed_poset}
  \end{figure}

\end{remark}

The following corollary of Theorem~\ref{thm:median_semilattice_and_pip}~(2) and Theorem~\ref{thm:closed_set_and_elementary_pip}~(1) will be used in Section~\ref{sec:algorithms}.

\begin{corollary} \label{cor:elementary_pip}
  Let $P$ be a PIP.
  If $\consideals{P}$ is isomorphic to some $\paren{\sqmeet, \sqjoin}$-closed set, then $P$ is elementary.
\end{corollary}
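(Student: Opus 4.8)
The plan is a short diagram chase connecting $P$, $\consideals{P}$, and the given $\paren{\sqmeet, \sqjoin}$-closed set $M$ through the two cited theorems, the whole argument resting on the fact that being \emph{elementary} is invariant under PIP isomorphism. So the conclusion will follow once I exhibit $P$ as isomorphic (as a PIP) to the join-irreducible PIP of $M$, which is elementary by Theorem~\ref{thm:closed_set_and_elementary_pip}~(1).

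First I would fix an isomorphism $\funcdoms{\psi}{\consideals{P}}{M}$ witnessing the hypothesis. Here $\consideals{P}$ is ordered by $\subseteq$ and $M$ by $\preceq$, so $\psi$ is an order isomorphism between two meet-semilattices; consequently $\psi$ carries the minimum to the minimum and preserves every meet as well as every \emph{existing} join (a least upper bound is transported to a least upper bound in both directions, and the absence of one is likewise preserved). Since join-irreducibility is defined in terms of the minimum and of joins, $\psi$ restricts to an order isomorphism $\irreducibles{\consideals{P}} \cong \irreducibles{M}$; and since the inconsistency relation $\inconsistent$ on a median semilattice is defined purely by non-existence of a join, $\psi$ transports $\inconsistent$ to $\inconsistent$. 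Hence $\psi$ induces an isomorphism of PIPs $\paren{\irreducibles{\consideals{P}}, \subseteq, \inconsistent} \cong \paren{\irreducibles{M}, \preceq, \inconsistent}$. This is the one step I would write out in full, as it is the only place where real (if routine) verification is needed.

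With that in hand I would simply chain the quoted results. By Theorem~\ref{thm:median_semilattice_and_pip}~(2), $\consideals{P}$ is a median semilattice and $P$ is isomorphic as a PIP to $\paren{\irreducibles{\consideals{P}}, \subseteq, \inconsistent}$; by Theorem~\ref{thm:closed_set_and_elementary_pip}~(1), $\paren{\irreducibles{M}, \preceq, \inconsistent}$ is elementary. Composing with the isomorphism from the previous paragraph gives
\[ P \cong \paren{\irreducibles{\consideals{P}}, \subseteq, \inconsistent} \cong \paren{\irreducibles{M}, \preceq, \inconsistent}, \]
so $P$ is isomorphic to an elementary PIP. Finally I would observe that the conditions (EP0)--(EP2) refer only to the partition into blocks of mutually minimally-inconsistent elements, the order $\leq$, and the relation $\mininconsistent$, all of which a PIP isomorphism preserves; thus ``elementary'' is an isomorphism invariant, and $P$ itself is elementary. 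The main (and essentially only) obstacle is the faithful transport of the join-non-existence relation in the first step; everything else is a direct appeal to Theorems~\ref{thm:median_semilattice_and_pip} and~\ref{thm:closed_set_and_elementary_pip}.
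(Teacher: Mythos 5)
Your proof is correct and follows exactly the route the paper intends: the corollary is stated there as a direct consequence of Theorem~\ref{thm:median_semilattice_and_pip}~(2) and Theorem~\ref{thm:closed_set_and_elementary_pip}~(1), combined by transporting the join-irreducibles and the join-non-existence relation along the given isomorphism and noting that the conditions (EP0)--(EP2) are isomorphism invariants, which is precisely what you carry out. The only point worth flagging is that the relation $\inconsistent$ on $\irreducibles{M}$ in Theorem~\ref{thm:closed_set_and_elementary_pip}~(1) is defined by non-existence of the join in ${S_k}^n$ rather than in $M$, but for a $\paren{\sqmeet, \sqjoin}$-closed set these coincide (any pair with a coordinatewise conflict has no upper bound at all, and otherwise the join equals $x \sqjoin y \in M$), so your identification is valid.
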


The remaining part of this section is devoted to proving Theorem~\ref{thm:closed_set_and_elementary_pip}.
To get a motivation behind the properties of elementary PIPs which we prove below, readers may choose to read Algorithm~\ref{alg:get_irreducibles_minimizers} in Section~\ref{sec:minimizing_oracle} first.

\subsection{Proof of Theorem~\ref{thm:closed_set_and_elementary_pip}~(1)}
\label{sec:proof_of_theorem_1}

The proof of Theorem~\ref{thm:closed_set_and_elementary_pip}~(1) is outlined as follows:
\begin{enumerate}
  \item[1.] First we define the \emph{differential} of a join-irreducible element $x$ as the difference between $x$ and the unique lower cover $x'$ of $x$.
  \item[2.] Next we introduce a \emph{normalized} $\paren{\sqmeet, \sqjoin}$-closed set, which is a $\paren{\sqmeet, \sqjoin}$-closed set such that every differential has exactly one nonzero component.
            We show that every $\paren{\sqmeet, \sqjoin}$-closed set is isomorphic to some normalized $\paren{\sqmeet, \sqjoin}$-closed set.
            This set gives us a natural partition of join-irreducible elements.
  \item[3.] Finally we construct an elementary PIP from the partition.
\end{enumerate}

A $\paren{\sqmeet, \sqjoin}$-closed set $M$ is said to be \emph{simple} if $\min M = \bm{0}$.
Any $\paren{\sqmeet, \sqjoin}$-closed set can be converted to a simple $\paren{\sqmeet, \sqjoin}$-closed set without any structural change.

\begin{definition} \label{def:differential}
  Let $M \subseteq {S_k}^n$ be a simple $\paren{\sqmeet, \sqjoin}$-closed set.
  For $x, y \in M$, we say that $y$ is a \emph{lower cover} of $x$, or \emph{$x$ covers $y$}, if $y \prec x$ and there is no $z \in M$ such that $y \prec z \prec x$.
  For a join-irreducible element $x \in \irreducibles{M}$, there uniquely exists $y \in M$ covered by $x$.
  The \emph{differential} $\bar{x} \in {S_k}^n$ of $x$ is defined by $\bar{x}_i \defeq x_i$ if $x_i \succ y_i = 0$ and $\bar{x}_i \defeq 0$ if $x_i = y_i$, for each $i \in \intset{n}$.
\end{definition}

The uniqueness of a lower cover of a join-irreducible element $x \in \irreducibles{M}$ can be seen from the fact that if $x$ has two or more lower covers, then $x$ is obtained as the join of these lower covers.
\begin{figure}[t]
  \begin{minipage}[t][3.7cm][t]{0.5\linewidth}
    \centering
    \begin{tikzpicture}[every node/.style={rectangle, draw, scale=0.8}, x=40pt, y=20pt]
      \node [] (000) at ( 0,   0) {0000};
      \node [double]  (100) at (-1,   1) {1300};
      \node [double]  (001) at ( 1,   1) {0001};
      \node [double]  (110) at (-1.5, 2) {1310};
      \node [] (101) at (-0.5, 2) {1301};
      \node [double]  (120) at ( 0.5, 2) {1320};
      \node [double]  (031) at ( 1.5, 2) {0031};
      \node [] (111) at (-1.5, 3) {1311};
      \node [] (121) at (-0.5, 3) {1321};
      \node [] (131) at ( 0.5, 3) {1331};
      \node [double]  (231) at ( 1.5, 3) {2231};
      \foreach \u / \v in {
        000/100, 000/001, 100/110, 100/101, 100/120, 001/101, 001/031,
        110/111, 101/111, 101/121, 101/131, 120/121, 031/131, 031/231}
        \draw[->, >=latex'] (\v) -- (\u);
    \end{tikzpicture}
    \subcaption{a simple $(\sqmeet, \sqjoin)$-closed set on ${S_3}^4$}
  \end{minipage}%
  \begin{minipage}[t][3.7cm][t]{0.5\linewidth}
    \centering
    \begin{tikzpicture}[every node/.style={rectangle, draw, double, scale=0.7, text width=width("1300")}, x=35pt, y=30pt]
      \node (100) at (-1,   0.5) {1300\\\textit{1300}};
      \node (110) at (-1.5, 1.5) {1310\\\textit{0010}};
      \node (120) at (-0.5, 1.5) {1320\\\textit{0020}};
      \node (001) at ( 1,   0)   {0001\\\textit{0001}};
      \node (031) at ( 1,   1)   {0031\\\textit{0030}};
      \node (231) at ( 1,   2)   {2231\\\textit{2200}};
      \foreach \u / \v in {100/110, 100/120, 001/031, 031/231}
        \draw[->, >=latex'] (\v) -- (\u);
      \foreach \u / \v in {110/120, 120/031, 100/231}
        \draw[densely dashed] (\v) -- (\u);
      \draw[densely dashed] (110) to [bend right=20] (031);
    \end{tikzpicture}
    \subcaption{the PIP corresponding to (a)}
  \end{minipage}
  \begin{minipage}[t][3.3cm][t]{0.5\linewidth}
    \centering
    \begin{tikzpicture}[every node/.style={rectangle, draw, scale=0.8}, x=40pt, y=20pt]
      \node [] (000) at ( 0,   0) {000};
      \node [double]  (100) at (-1,   1) {100};
      \node [double]  (001) at ( 1,   1) {001};
      \node [double]  (110) at (-1.5, 2) {110};
      \node [] (101) at (-0.5, 2) {101};
      \node [double]  (120) at ( 0.5, 2) {120};
      \node [double]  (031) at ( 1.5, 2) {031};
      \node [] (111) at (-1.5, 3) {111};
      \node [] (121) at (-0.5, 3) {121};
      \node [] (131) at ( 0.5, 3) {131};
      \node [double]  (231) at ( 1.5, 3) {231};
      \foreach \u / \v in {
        000/100, 000/001, 100/110, 100/101, 100/120, 001/101, 001/031,
        110/111, 101/111, 101/121, 101/131, 120/121, 031/131, 031/231}
        \draw[->, >=latex'] (\v) -- (\u);
    \end{tikzpicture}
    \subcaption{a normalized $(\sqmeet, \sqjoin)$-closed set on ${S_3}^3$}
  \end{minipage}%
  \begin{minipage}[t][3.3cm][t]{0.5\linewidth}
    \centering
    \begin{tikzpicture}[every node/.style={rectangle, draw, scale=0.7, double, text width=width("100")}, x=35pt, y=30pt]
      \node (100) at (-1,   0.5) {100\\\textit{100}};
      \node (110) at (-1.5, 1.5) {110\\\textit{010}};
      \node (120) at (-0.5, 1.5) {120\\\textit{020}};
      \node (001) at ( 1,   0)   {001\\\textit{001}};
      \node (031) at ( 1,   1)   {031\\\textit{030}};
      \node (231) at ( 1,   2)   {231\\\textit{200}};
      \foreach \u / \v in {100/110, 100/120, 001/031, 031/231}
        \draw[->, >=latex'] (\v) -- (\u);
      \foreach \u / \v in {110/120, 120/031, 100/231}
        \draw[densely dashed] (\v) -- (\u);
      \draw[densely dashed] (110) to [bend right=20] (031);
    \end{tikzpicture}
    \subcaption{the PIP corresponding to (c)}
  \end{minipage}
  \caption{
    Examples of a simple $(\sqmeet, \sqjoin)$-closed set, a normalized set and the corresponding PIPs.
    In (b) and (d), the differential of each join-irreducible element is written in italics.
  }
  \label{fig:example_of_differential_and_normalizing}
\end{figure}
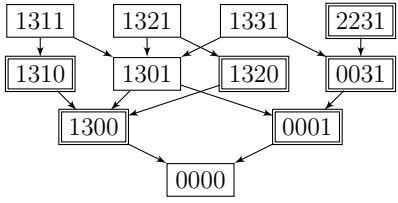
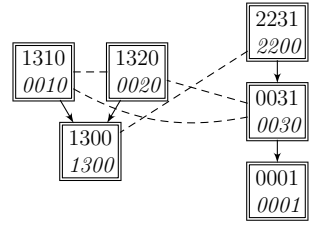
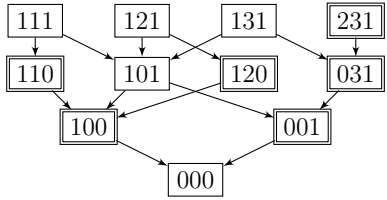
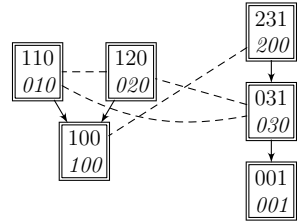
Figure~\ref{fig:example_of_differential_and_normalizing}~(a) and (b) show examples of a simple $\paren{\sqmeet, \sqjoin}$-closed set and the corresponding PIP, respectively.

We show some properties about differentials.
In what follows, we denote the subset $\setprnsep{x \in M}{x_i = \alpha}$ by $\partfix{M}{i}{\alpha}$ for $i \in \intset{n}$ and $\alpha \in \intset{k}$.
Note that $\partfix{M}{i}{\alpha}$ also forms a $\paren{\sqmeet, \sqjoin}$-closed set if $\partfix{M}{i}{\alpha} \neq \varnothing$.

\begin{lemma} \label{lem:differential}
  Let $M \subseteq {S_k}^n$ be a simple $\paren{\sqmeet, \sqjoin}$-closed set. 
  The following hold:
  \begin{enumerate}
    \item For every $i \in \intset{n}$ and $\alpha \in \intset{k}$ with $\partfix{M}{i}{\alpha} \neq \varnothing$, $x \defeq \min \partfix{M}{i}{\alpha}$ is join-irreducible in $M$ and $\bar{x}_i = \alpha$ holds.
    \item For every $x \in \irreducibles{M}$ and $i \in \intset{n}$ with $\alpha \defeq \bar{x}_i \neq 0$, it holds $x = \min \partfix{M}{i}{\alpha}$.
    \item For every $i \in \intset{n}$ and $\alpha \in \intset{k}$, there is at most one join-irreducible element $x \in \irreducibles{M}$ such that $\bar{x}_i = \alpha$.
    \item For every $x \in \irreducibles{M}$, the differential $\bar{x}$ of $x$ has at least one nonzero component.
    \item The map $x \mapsto \bar{x}$ is an injection from $\irreducibles{M}$ to ${S_k}^n$.
  \end{enumerate}
\end{lemma}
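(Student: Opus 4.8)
The plan is to establish parts (1) and (2) directly and then read off (3)--(5) as quick consequences. Before starting I would record two structural facts. First, for $\alpha \in \intset{k}$ (so $\alpha \neq 0$) with $\partfix{M}{i}{\alpha} \neq \varnothing$, any two elements of $\partfix{M}{i}{\alpha}$ agree in coordinate $i$, hence are $\preceq$-comparable there, so $\partfix{M}{i}{\alpha}$ is again $\paren{\sqmeet, \sqjoin}$-closed; being finite and closed under $\sqmeet = \meet$, it has a least element, so $\min \partfix{M}{i}{\alpha}$ is well defined. Second, I would use the description of a principal ideal $\downarrow x$ from the proof of Lemma~\ref{lem:closed_set_is_median_semilattice}: every $y \preceq x$ has $y_i \in \setprn{0, x_i}$ for all $i$, the join of elements of $\downarrow x$ coincides with $\sqjoin$, and the $i$-th coordinate of such a join is simply the $\preceq$-maximum of the coordinates. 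I would also note that a join-irreducible $x$ has a unique lower cover $y$ (Definition~\ref{def:differential}), that $y$ is then the unique coatom of the lattice $\downarrow x$, and therefore that every $w \in M$ with $w \prec x$ satisfies $w \preceq y$ (in any finite lattice every element below the top lies under some coatom).

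For (1), set $x \defeq \min \partfix{M}{i}{\alpha}$. Any $y \prec x$ has $y_i \in \setprn{0, \alpha}$, and $y_i = \alpha$ would force $y \in \partfix{M}{i}{\alpha}$, hence $y \succeq x$, contradicting $y \prec x$; so every element strictly below $x$ has $i$-th coordinate $0$. If $x$ had two distinct lower covers $y, y'$, these would be incomparable coatoms of $\downarrow x$ with $y \join y' = x$, whence $\alpha = x_i = \max\setprn{y_i, y'_i} = 0$, a contradiction; thus $x$ has a unique lower cover and is join-irreducible. Since that lower cover has $i$-th coordinate $0$ while $x_i = \alpha$, the definition of the differential gives $\bar{x}_i = \alpha$.

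For (2), take $x \in \irreducibles{M}$ with $\alpha \defeq \bar{x}_i \neq 0$. By definition of the differential this means $x_i = \alpha$, so $x \in \partfix{M}{i}{\alpha}$ and hence $m \defeq \min \partfix{M}{i}{\alpha}$ satisfies $m \preceq x$. If $m \prec x$, then $m$ lies below the unique lower cover $y$ of $x$, so $m_i \preceq y_i = 0$ forces $m_i = 0$, contradicting $m \in \partfix{M}{i}{\alpha}$; therefore $m = x$. Parts (3)--(5) then follow immediately: (3) because any join-irreducible element with $\bar{x}_i = \alpha$ (where $\alpha \in \intset{k}$) equals the single element $\min \partfix{M}{i}{\alpha}$ by (2); (4) because a join-irreducible $x$ is not $\bm{0} = \min M$, so it differs from its lower cover $y$ in some coordinate $i$, where $y_i \prec x_i$ in $S_k$ forces $y_i = 0 \neq x_i$ and thus $\bar{x}_i = x_i \neq 0$; and (5) because if $\bar{x} = \bar{x'}$, then choosing $i$ with $\bar{x}_i = \alpha \neq 0$ (possible by (4)) gives $x = \min \partfix{M}{i}{\alpha} = x'$ by (2).

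The only real obstacle is the bookkeeping in the first structural fact: the identities ``join $= \sqjoin$'' and ``$i$-th coordinate of a join is the coordinatewise $\preceq$-maximum'' are valid only inside a principal ideal, not for arbitrary joins in $M$, so I would be careful to invoke them only after restricting attention to $\downarrow x$. Once that is in place, every step reduces to elementary coordinatewise reasoning in $S_k$, where $a \preceq b$ simply means $a \in \setprn{0, b}$.
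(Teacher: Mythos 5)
Your proof is correct and takes essentially the same route as the paper's: both establish (1) and (2) by exploiting that every element strictly below $\min \partfix{M}{i}{\alpha}$ has $i$-th coordinate $0$ and that everything strictly below a join-irreducible element lies under its unique lower cover, and then derive (3)--(5) as immediate consequences. The only cosmetic difference is that in (1) you argue via uniqueness of the lower cover while the paper directly contradicts a decomposition $x = y \join z$, and in (2) you derive the contradiction at $m_i$ rather than at $\bar{x}_i$; these are interchangeable.
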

\begin{proof}
  (1).
  Let $i \in \intset{n} \wc \alpha \in \intset{k}$ and $x \defeq \min \partfix{M}{i}{\alpha}$.
  Suppose to the contrary that $x \notin \irreducibles{M}$.
  Then there exist $y, z \in M$ such that $x \succ y, z$ and $x = y \join z$.
  Since $\alpha = x_i = y_i \join z_i$, either $y_i$ or $z_i$ is equal to $\alpha$.
  This contradicts the assumption that $x = \min \partfix{M}{i}{\alpha}$ and $x \succ y, z$.
  Hence $x$ is join-irreducible.
  Moreover, from Definition~\ref{def:differential}, it holds $\bar{x}_i = \alpha$.
    
  (2).
  Let $x \in \irreducibles{M}$ and $i \in \intset{n}$ such that $\alpha \defeq \bar{x}_i \neq 0$.
  Then $x \in \partfix{M}{i}{\alpha}$.
  Let $y \in M$ be the lower cover of $x$ and let $z \in \partfix{M}{i}{\alpha}$ be the minimum element of $\partfix{M}{i}{\alpha}$.
  Suppose that $x \neq z$.
  Then it holds $z \preceq y \prec x$ since $z \prec x$.
  Hence we obtain $z_i = y_i = x_i = \alpha$, which claims that $\bar{x}_i = 0$ by Definition~\ref{def:differential}.
  This contradicts the assumption.
  Thus $x = z = \min \partfix{M}{i}{\alpha}$ holds.
  
  (3).
  Suppose that $M$ has a join-irreducible element $x \in \irreducibles{M}$ such that $\bar{x}_i = \alpha$.
  From (2), it holds $x = \min \partfix{M}{i}{\alpha}$.
  This lemma follows from the uniqueness of the minimum element of $\partfix{M}{i}{\alpha}$.
  
  (4).
  Assume that $M$ has a join-irreducible element $x$ such that $\bar{x} = \bm{0}$.
  Let $y \in M$ be the lower cover of $x$.
  Then $x_i = y_i$ must hold for each $i \in \intset{n}$, which contradicts that $y \prec x$.
  
  (5).
  Let $x, y \in \irreducibles{M}$ such that $\bar{x} = \bar{y}$.
  Since $\bar{x} = \bar{y} \neq \bm{0}$ from (4), there exists $i \in \intset{n}$ such that $\bar{x}_i = \bar{y}_i \neq 0$.
  Then $x = y$ follows from (3).
\end{proof}

Now Proposition~\ref{prop:number_of_irreducibles} is a consequence of Lemma~\ref{lem:differential}.

\begin{proof}[Proof of Proposition~\ref{prop:number_of_irreducibles}]
  It suffices to consider the case where $M$ is simple.
  From Lemmas~\ref{lem:differential}~(3) and (4), it holds $\absprn{\setprnsep{\bar{x}}{x \in \irreducibles{M}}} \leq kn$.
  Furthermore, since the map $x \to \bar{x}$ is injective, we have $\absprn{\irreducibles{M}} = \absprn{\setprnsep{\bar{x}}{x \in \irreducibles{M}}}$.
  Hence $\absprn{\irreducibles{M}}$ is at most $kn$.
\end{proof}

A simple $\paren{\sqmeet, \sqjoin}$-closed set $M$ is said to be \emph{normalized} if it satisfies $\absprn{\supp \bar{x}} = 1$ for all $x \in \irreducibles{M}$.
Examples of a normalized $\paren{\sqmeet, \sqjoin}$-closed set and the corresponding PIP are shown in Figure~\ref{fig:example_of_differential_and_normalizing}~(c) and (d), respectively.

\begin{lemma} \label{lem:normalized_is_general}
  For any $\paren{\sqmeet, \sqjoin}$-closed set $M$, there exists a normalized $\paren{\sqmeet, \sqjoin}$-closed set that is isomorphic to $M$ with respect to the relations $\preceq$ and $\smile$.
\end{lemma}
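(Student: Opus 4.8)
The plan is to realize $M$ after a coordinate-wise relabeling that fuses together the coordinates that ``move in lockstep.'' First I would reduce to the case where $M$ is simple, as permitted by the observation preceding Definition~\ref{def:differential}. The guiding idea is that a join-irreducible $x$ has $\absprn{\supp \bar x} \geq 2$ exactly when several coordinates are forced to take prescribed nonzero values simultaneously; fusing each maximal such group of coordinates into a single $S_k$-coordinate should turn every differential into a one-coordinate vector without disturbing $\preceq$ or $\smile$.

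The technical heart is a synchronization statement: if $\partfix{M}{i}{\alpha} = \partfix{M}{j}{\gamma}$ for some $i \neq j$ and nonzero $\alpha, \gamma$ (equivalently, by Lemma~\ref{lem:differential}~(2), $j \in \supp \bar x$ for $x = \min \partfix{M}{i}{\alpha}$), then coordinates $i$ and $j$ are \emph{completely} synchronized: there is a bijection $\sigma$ between the nonzero values occurring at $i$ and those occurring at $j$ with $\partfix{M}{i}{a} = \partfix{M}{j}{\app{\sigma}{a}}$ for every such $a$. I would prove this by restricting attention to the two coordinates, whose projection is again $\paren{\sqmeet, \sqjoin}$-closed. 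Closure under $\sqjoin$ rules out elements that are nonzero in coordinate $i$ but zero in coordinate $j$: joining such an element with $x = \paren{\ldots, \alpha, \ldots, \gamma, \ldots}$ produces an element lying in $\partfix{M}{j}{\gamma}$ but not in $\partfix{M}{i}{\alpha}$, contradicting $\partfix{M}{i}{\alpha} = \partfix{M}{j}{\gamma}$; a symmetric argument handles the reverse, and then closure forbids a fixed value at $i$ from coexisting with two distinct values at $j$. This is the step I expect to be the main obstacle, since it is where the full force of $\paren{\sqmeet, \sqjoin}$-closedness is used and where the delicate bookkeeping of incomparable values in $S_k$ lives.

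Granting synchronization, I would define an equivalence relation $\approx$ on the coordinates that actually occur (those $i$ with $\partfix{M}{i}{\alpha} \neq \varnothing$ for some $\alpha$) by declaring $i \approx j$ when $i$ and $j$ are synchronized; transitivity follows by composing the bijections $\sigma$. Combining the two directions of the correspondence with Lemma~\ref{lem:differential}, the support $\supp \bar x$ of each join-irreducible $x = \min \partfix{M}{i}{\alpha}$ equals precisely the $\approx$-class of $i$. In particular the supports of distinct join-irreducibles are equal or disjoint, the join-irreducibles sharing a given block $B$ are pairwise inconsistent, and their number equals $\absprn{A_i} \leq k$ for any $i \in B$, where $A_i$ denotes the set of nonzero values occurring at $i$.

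Finally I would build the normalized set by fusion. For each $\approx$-block $B$ I introduce one new coordinate and, using the bound $\leq k$, assign the join-irreducibles with support $B$ to distinct nonzero labels of $S_k$. Every $z \in M$ restricts on $B$ either to $\bm{0}$ or, as synchronization forces, to the $B$-pattern of a unique such join-irreducible, so the fusion map $\Phi$ is well defined and injective. The possible $B$-patterns form an antichain of nonzero vectors together with $\bm{0}$, and $\sqmeet$ or $\sqjoin$ of two distinct patterns collapses to $\bm{0}$; checking this block by block shows that $\Phi$ preserves $\preceq$, $\sqmeet$ and $\sqjoin$, hence is an isomorphism onto a $\paren{\sqmeet, \sqjoin}$-closed set $M' = \app{\Phi}{M}$ that also preserves $\smile$ (the join of two elements exists in $M$ if and only if it exists in $M'$). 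Since each join-irreducible agrees with its lower cover off its support block, every differential of $M'$ has a single nonzero coordinate, so $M'$ is the desired normalized set isomorphic to $M$.
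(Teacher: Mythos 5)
Your proposal is correct and follows essentially the same route as the paper's proof: reduce to the simple case, show that the coordinates partition into blocks that move in lockstep (your synchronization bijections $\partfix{M}{i}{a} = \partfix{M}{j}{\app{\sigma}{a}}$ are an explicit restatement of the paper's claim that supports of differentials are equal or disjoint, combined with Lemma~\ref{lem:differential}~(2)--(3)), and then contract each block into a single $S_k$-coordinate via an injection preserving $\preceq$ and $\inconsistent$. The only difference is cosmetic: the paper derives the block structure from the differential lemma directly, while you re-derive it from $\sqjoin$-closedness on two-coordinate projections.
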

\begin{proof}
  We can suppose that $M$ is simple.
  We first show:
  \begin{enumerate}
    \item[(1)] For $x, y \in \irreducibles{M}$, it holds that $\supp \bar{x} = \supp \bar{y}$ or $\supp \bar{x} \cap \supp \bar{y} = \varnothing$.
  \end{enumerate}
  Suppose to the contrary that there exist $x,y \in \irreducibles{M}$ such that $\bar{x}_i \neq 0 \neq \bar{y}_i$ and $\bar{x}_j \neq 0 = \bar{y}_j$ for distinct $i, j \in \intset{n}$.
  Then it holds $\bar{x}_i \neq \bar{y}_i$ from Lemma~\ref{lem:differential}~(3).
  Hence we have $x \not\preceq x \sqjoin y$ since $\paren{x \sqjoin y}_i = 0$.
  However, both $x$ and $x \sqjoin y$ belong to $\partfix{M}{j}{x_j}$ since $\paren{x \sqjoin y}_j = x_j \neq 0$, thus it holds $x \preceq x \sqjoin y$ from Lemma~\ref{lem:differential}~(2).
  This is a contradiction.
  
  By (1), we can define an equivalence relation $\sim$ over the index set
  \begin{align*}
    \setprnsep{i \in \intset{n}}{\text{there exists $x \in \irreducibles{M}$ such that $\bar{x}_i \neq 0$}}
  \end{align*}
  as follows:
  \begin{align*}
    \text{$i \sim j$ if and only if there exists $x \in \irreducibles{M}$ such that $\bar{x}_i \neq 0$ and $\bar{x}_j \neq 0$}.
  \end{align*}
  Then each equivalence class can be ``contracted'' into a single index without any structural change of $M$ as follows.
  Let $\setprn{I_1, I_2, \ldots, I_{\tilde{n}}}$ be the set of equivalence classes.
  For $j \in \intset{\tilde n}$, let $\setprn{x^{j,1}, x^{j,2}, \ldots, x^{j,k_{j}}} \subseteq \irreducibles{M}$ be the set of join-irreducible elements having the differentials of support $I_j$.
  Then, by Lemma~\ref{lem:differential},
  \begin{enumerate}
    \item[(2)] For every $x \in M$ and $j \in \intset{\tilde{n}}$, either $x_i = 0$ for all $i \in I_j$ or there uniquely exists $\alpha \in \intset{k_j}$ such that $x_i = \paren{x^{j, \alpha}}_i$ for all $i \in I_j$.
  \end{enumerate}
  Let $\tilde{k} \defeq \max_{j \in [\tilde n]} k_j$.
  Define $\funcdoms{\phi}{M}{{S_{\tilde{k}}}^{\tilde{n}}}$ by $\app{\phi}{x}_j = 0$ if $x_i = 0$ for $i \in I_j$, and $\app{\phi}{x}_j = \alpha$ if $x_i = \paren{x^{j,\alpha}}_i$ for $i \in I_j$.
  It is easily verified (from Lemma~\ref{lem:differential}) that the map $\phi$ is injective and preserves $\preceq$ and $\inconsistent$.
  An irreducible element of $\app{\phi}{M}$ is the image of an irreducible element of $M$, and, by construction, has the differential of a singleton support.  
  Thus $\app{\phi}{M}$ is a normalized $\paren{\sqmeet, \sqjoin}$-closed set.
\end{proof}

Now we are ready to prove Theorem~\ref{thm:closed_set_and_elementary_pip}~(1).

\begin{proof}[Proof of Theorem~\ref{thm:closed_set_and_elementary_pip}~(1)]
  By Lemma~\ref{lem:normalized_is_general}, it suffices to consider the case where $M$ is normalized.
  For every $i \in \intset{n}$, let $J_i \defeq \setprnsep{x \in \irreducibles{M}}{\supp \bar{x} = \setprn{i}}$.
  From the definition of normalized $\paren{\sqmeet, \sqjoin}$-closed sets, $\setprn{J_1, J_2, \ldots, J_n}$ forms a partition of $\irreducibles{M}$ (note that $J_i$ may be empty).
  We show that the PIP $\paren{\irreducibles{M}, \preceq, \smile}$ satisfies the axiom of elementary PIPs with $P_i = J_i$ for every $i \in \intset{n}$.
  
  (EP0, ``only if'' part).
  Let $\setprn{x, y} \subseteq \irreducibles{M}$ be a minimally inconsistent pair.
  Then there exists $i \in \intset{n}$ such that $0 \neq x_i \neq y_i \neq 0$.
  We show $0 \neq \bar{x}_i \neq \bar{y}_i \neq 0$.
  Suppose that $\bar{x}_i = 0$.
  From Definition~\ref{def:differential}, there exists $x' \in \irreducibles{M}$ such that $x' \prec x$ and $x'_i = x_i$.
  Now we have $x' \prec x$ and $x' \smile y$, which contradict the assumption that $x$ and $y$ are minimally inconsistent.
  Therefore $\bar{x}_i \neq 0$ holds, and we can show $\bar{y}_i \neq 0$ in the same way.
  Thus $\setprn{x, y} \subseteq J_i$ holds.
  
  (EP1) is an immediate consequence of the following property:
  \begin{itemize}
    \item[($*$)] Let $i, j \in \intset{n}$ be distinct.
      If there exist $x' \in J_i$ and $y \in J_j$ such that $x' \prec y$, then for all $x \in J_i \setminus \setprn{x'}$, there exists $y' \in J_j \setminus \setprn{y}$ such that $y'
\prec x$; in particular $|J_j| \geq 2$ if $|J_i| \geq 2$.
  \end{itemize}
  
  We show ($*$).
  Let $x' \in J_i$ and $y \in J_j$ such that $x' \prec y$.
  Now it holds $x'_i = y_i \neq 0$ and $y_j \neq 0$.
  Let $x \in J_i \setminus \setprn{x'}$.
  We have $y_i \neq \paren{x \sqjoin y}_i = 0$ since $0 \neq x_i \neq x'_i = y_i \neq 0$.
  Thus $y \preceq x \sqjoin y$ does not hold.
  We show that $0 \neq x_j \neq y_j$.
  If not, $\paren{x \sqjoin y}_j$ is equal to $y_j$, hence $x \sqjoin y$ belongs to $\partfix{M}{j}{y_j}$.
  Therefore it holds $y \preceq x \sqjoin y$ since $y$ is the minimum element of $\partfix{M}{j}{y_j}$ from Lemma~\ref{lem:differential}~(2).
  We have a contradiction here.
  Let $y' \defeq \min \partfix{M}{j}{x_j}$.
  This $y'$ belongs to $J_j$ from Lemma~\ref{lem:differential}~(1), and it holds $y \neq y' \prec x$.
  
  (EP2).
  Let $i, j \in \intset{n}$ be distinct indices such that $\absprn{J_i} \geq 2$ and $\absprn{J_j} \geq 2$.
  We can assume that (EP2-1) does not hold, i.e., there exist $x' \in J_i$ and $y \in J_j$ such that $x' \prec y$.
  Consider $x, z \in J_i \setminus \setprn{x'}$.
  By ($*$), there exist $y', y'' \in J_j \setminus \setprn{y}$ such that $y' \prec x$ and $y'' \prec z$.
  We show $y' = y''$. Suppose not.
  Since $y' \prec x$ and $y'' \in J_j \setminus \setprn{y'}$, we can take $x'' \in J_i \setminus \setprn{x}$ such that $x'' \prec y''$ by ($*$) with changing the role of $i$ and $j$.
  Now we have $x'' \prec y'' \prec z$, which contradicts $x'' \smile z$.
  Therefore $y'$ and $y''$ are same elements.
  Consequently, the required element $y^\circ$ in (EP2-2) is given by $y'$.
  By changing the role of $i$ and $j$, we see that $x^\circ$ is given by $x'$.
  
  (EP0, ``if'' part).
  Let $x, y \in J_i$ be distinct with $i \in \intset{n}$.
  Now since $x \smile y$, there exists a minimally inconsistent pair $\setprn{x', y'} \subseteq \irreducibles{M}$ such that $x' \preceq x$ and $y' \preceq y$.
  From the ``only if'' part of (EP0), $x'$ and $y'$ belong to $J_j$ for some $j \in \intset{n}$.
  If $i \neq j$, then we have $x, y \in J_i$, $x',y' \in J_j$, $x' \prec x$ and $y' \prec y$, which contradict (EP2).
  Hence $i = j$ and it must hold $\setprn{x', y'} = \setprn{x, y}$.
\end{proof}

\subsection{Proof of Theorem~\ref{thm:closed_set_and_elementary_pip}~(2)}

Let $\paren{P, \leq, \inconsistent}$ be an elementary PIP with partition $\setprn{P_1, P_2, \ldots, P_n}$ of condition (EP0).
For every $i \in \intset{n}$, let $P_i = \setprn{e^{i,1}, e^{i,2}, \ldots, e^{i,k_i}}$, where $k_i \defeq \absprn{P_i}$.
Let $k \defeq \max_{i \in \intset{n}} k_i$.
For a consistent ideal $I$, let $\app{x}{I} = \paren{\app{x_1}{I}, \app{x_2}{I}, \ldots, \app{x_n}{I}} \in {S_k}^n$ be defined by
\begin{align*}
  \app{x_i}{I} \defeq \begin{cases}
    \alpha & \text{($I \cap P_i = \setprn{e^{i,\alpha}}$),} \\
    0      & \text{($I \cap P_i = \varnothing$)}
  \end{cases}
  \quad
  \paren{i \in \intset{n}}.
\end{align*}
Now $\app{x}{I}$ is well-defined since every consistent ideal $I$ of $P$ has at most one element in each $P_i$ by (EP0).
Let $M \defeq \setprnsep{\app{x}{I}}{I \in \consideals{P}}$.
Then $\consideals{P}$ and $M$ are clearly isomorphic.
Therefore the rest of the proof of Theorem~\ref{thm:closed_set_and_elementary_pip}~(2) is to show that $M$ forms a $\paren{\sqmeet, \sqjoin}$-closed set.

We define binary operations $\sqmeet$ and $\sqjoin$ on $\consideals{P}$ as $I \sqmeet J \defeq I \cap J$ and
\begin{align*}
  I \sqjoin J \defeq \bigcup_{i=1}^n \setprnsep{p \in P_i}{\paren{I \cup J} \cap P_i = \setprn{p}}
\end{align*}
for every $I, J \in \consideals{P}$.
Theorem~\ref{thm:closed_set_and_elementary_pip}~(2) follows immediately from:

\begin{lemma} \label{lem:consistent_ideal_family_of_P_is_closed}
  For every $I, J \in \consideals{P}$, it hold $I \sqmeet J \in \consideals{P}, I \sqjoin J \in \consideals{P}$, $\app{x}{I \sqmeet J} = \app{x}{I} \sqmeet \app{x}{J}$ and $\app{x}{I \sqjoin J} = \app{x}{I} \sqjoin \app{x}{J}$.
\end{lemma}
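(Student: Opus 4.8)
The plan is to reduce all four assertions to two tasks: checking the two value-identities, which is routine, and proving that $I \sqjoin J$ is a consistent ideal, which is the only place the elementary axioms (EP1) and (EP2) are needed. The meet half is immediate: $I \sqmeet J = I \cap J$ is downward-closed as an intersection of ideals and is consistent because it is contained in the consistent ideal $I$.

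For the two identities I would argue component-wise. The key feature of $S_k$ is that two distinct nonzero labels $\alpha \ne \beta$ are $\preceq$-incomparable, so $\paren{\app{x}{I} \sqmeet \app{x}{J}}_i$ equals a common value $\alpha$ exactly when $\app{x_i}{I} = \app{x_i}{J} = \alpha \ne 0$ and is $0$ otherwise, while $\paren{\app{x}{I} \sqjoin \app{x}{J}}_i$ equals $\alpha$ exactly when $\app{x_i}{I}$ and $\app{x_i}{J}$ agree on a nonzero label or one of them is $0$, and is $0$ when they disagree on nonzero labels. Comparing this with $\paren{I \cap J} \cap P_i = \paren{I \cap P_i} \cap \paren{J \cap P_i}$ and with the defining intersection $\paren{I \cup J} \cap P_i$ yields $\app{x}{I \sqmeet J} = \app{x}{I} \sqmeet \app{x}{J}$ and $\app{x}{I \sqjoin J} = \app{x}{I} \sqjoin \app{x}{J}$ at once (the latter once $I \sqjoin J$ is known to be a consistent ideal, so that $\app{x}{I \sqjoin J}$ is defined).

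The heart of the proof is that $I \sqjoin J$ is an ideal. Here I would take $p \in I \sqjoin J$, say $p \in P_i$ with $\paren{I \cup J} \cap P_i = \setprn{p}$, together with $q \prec p$ lying in $P_j$; since distinct elements of one part are incomparable, $i \ne j$, and $q$ lies in $I \cup J$ (it sits below $p$, which belongs to one of the two ideals). It then remains to show $\paren{I \cup J} \cap P_j = \setprn{q}$. If $\absprn{P_j} = 1$ this is automatic, so suppose some $q' \ne q$ of $P_j$ also lies in $I \cup J$; arranging by symmetry that $p \in I$, we get $q \in I$ and hence $q' \in J$ (otherwise $I$ would contain the inconsistent pair $\setprn{q, q'}$). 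If $\absprn{P_i} = 1$, the relation $q \prec p$ already violates (EP1). If $\absprn{P_i} \ge 2$, then $q \prec p$ makes (EP2-1) fail, so (EP2-2) supplies $x^\circ \in P_i$ and $y^\circ \in P_j$; using that two distinct elements of $P_i$ are incomparable, one checks that $y^\circ$ is the only element of $P_j$ that can sit strictly below an element of $P_i$, which forces $q = y^\circ$ and $p \ne x^\circ$. Then $q' \ne y^\circ$ gives $x^\circ \prec q'$, so $x^\circ \in J$ because $J$ is an ideal; but now $\setprn{p, x^\circ} \subseteq \paren{I \cup J} \cap P_i$ with $p \ne x^\circ$ contradicts $\paren{I \cup J} \cap P_i = \setprn{p}$. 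This establishes the ideal property.

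Consistency of $I \sqjoin J$ then comes for free: by construction $I \sqjoin J$ meets each $P_i$ in at most one element, whereas any inconsistent pair inside it would dominate a minimally inconsistent pair, which by (EP0) lies within a single part $P_i$; downward-closure (just proved) would place both of its distinct elements in $\paren{I \sqjoin J} \cap P_i$, a contradiction. I expect the (EP2) subcase of the ideal property to be the main obstacle, since it is the only point where the precise comparability pattern of an elementary PIP, rather than merely the partition (EP0), is essential.
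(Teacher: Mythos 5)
Your proof is correct and follows essentially the same route as the paper's: the meet half and the two component-wise identities are handled identically, and your argument that $I \sqjoin J$ is downward closed is the paper's contradiction argument with the roles of the upper and lower elements interchanged, hinging in both cases on (EP1) for the singleton case and on (EP2-2) to propagate the doubly-hit part of the partition from the lower element's part back to the upper element's part. No changes needed.
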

\begin{proof}
  Let $I, J \in \consideals{P}$.
  Since the consistent ideal family of a PIP is closed under the intersection, $I \sqmeet J$ also forms a consistent ideal of $P$.
  In addition, we can easily check that $\app{x}{I \sqmeet J} = \app{x}{I} \sqmeet \app{x}{J}$ holds.
  
  Next we consider $I \sqjoin J$.
  We show that $I \sqjoin J$ is a consistent ideal.
  Suppose that $I \sqjoin J$ is not an ideal of $P$.
  There exist $q \in I \sqjoin J$ and $p \in P \setminus \paren{I \sqjoin J}$ such that $p < q$.
  Without loss of generality, we assume $q \in I$.
  Now $I$ also contains $p$ since $I$ is an ideal.
  Let $i, j \in \intset{n}$ such that $p \in P_i$ and $q \in P_j$.
  We can take $r \in \paren{J \cap P_i} \setminus \setprn{p}$ since $p \notin I \sqjoin J$.
  Thus $\absprn{P_i}$ is greater than 1, and $\absprn{P_j}$ is also greater than 1 since $p < q$ contradicts the condition (EP1) if $\absprn{P_j} = 1$.
  From (EP2-2), there exists $s \in P_j \setminus \setprn{q}$ such that $s < r$.
  It holds $s \in J$ since $r \in J$.
  Now we have $q \neq s$, $q \in I$, $s \in J$ and $q, s \in P_j$.
  This contradicts $q \in I \sqjoin J$.
  Therefore $I \sqjoin J$ is an ideal.
  Finally suppose that $I \sqjoin J$ includes an inconsistent pair $\setprn{p, q}$.
  Since $I \sqjoin J$ is an ideal, it also includes the minimally inconsistent pair $\setprn{p', q'}$ with $p' \leq p$ and $q' \leq q$.
  From (EP0), $p'$ and $q'$ belong to the same part $P_i$ of the partition.
  This contradicts the fact that $\absprn{\paren{I \sqjoin J} \cap P_i} \leq 1$, and thus $I \sqjoin J$ is a consistent ideal.
  $\app{x}{I \sqjoin J} = \app{x}{I} \sqjoin \app{x}{J}$ follows from the definitions of $\sqjoin$ on ${S_k}^n$ and on $\consideals{P}$.
\end{proof}

\section{Algorithms}
\label{sec:algorithms}

In this section, we study algorithmic aspects of constructing PIP-representations for the minimizer sets of $k$-submodular functions.
Let $\minimizers{f}$ denote the minimizer set of a function $f$.
Let $\maxflow{n}{m}$ denote the time complexity of an algorithm of a maximum flow (and a minimum cut) in a network of $n$ vertices and $m$ edges.
We assume a standard max-flow algorithm, such as preflow-push algorithm, and hence assume that $\maxflow{n}{m}$ is not less than $\Order{nm}$; notice that the current fastest one is
an $\Order{nm}$ algorithm by Orlin~\cite{Orlin2013}.

\subsection{By a minimizing oracle}
\label{sec:minimizing_oracle}
We can obtain the PIP-representation for the minimizer set of a $k$-submodular function $\funcdoms{f}{{S_k}^n}{\setRext}$ by using a minimizing oracle $k$-SFM, which returns a minimizer of $f$ and its restrictions.
Let $\min f$ be the minimum value of $f$.
For $i \in \intset{n}$ and $a \in S_k$, we define a new $k$-submodular function $\funcdoms{\fixarg{f}{i}{a}}{{S_k}^n}{\setRext}$ from $f$ by
\begin{align*}
  \app{\fixarg{f}{i}{a}}{x_1, \ldots, x_i, \ldots, x_n} \defeq f(x_1, \ldots, \vectorpos{a}{i}, \ldots, x_n)
  \quad
  \paren{x \in {S_k}^n}.
\end{align*}
Namely, $\fixarg{f}{i}{a}$ is a function obtained by fixing the $i$-th variable of $f$ to $a$.

Before describing the main part of our algorithm, we present a subroutine \textproc{GetMinimumMinimizer} in Algorithm~\ref{alg:get_minimum_minimizer}.
This subroutine returns the minimum minimizer of a $k$-submodular function.
The validity of this subroutine can be checked by the fact that $\min \fixarg{f}{i}{0}$ is equal to $\min f$ if $\paren{\min \minimizers{f}}_i = 0$ and otherwise it holds $\min \fixarg{f}{i}{0} > \min f$.
This subroutine calls $k$-SFM at most $n+1$ times.

\begin{algorithm}[tp]
  \caption{Obtain the minimum minimizer of a $k$-submodular function}
  \label{alg:get_minimum_minimizer}
  \begin{algorithmic}[1]
    \Input  A $k$-submodular function $\funcdoms{f}{{S_k}^n}{\setRext}$
    \Output The minimum minimizer $\min \minimizers{f}$ of $f$
    \Function{GetMinimumMinimizer}{$f$}
      \State{$x \gets \Call{$k$-SFM}{f}$}
      \For{$i \in \supp x$}
        \If{$\min \fixarg{f}{i}{0} = \min f$}
          \State{$x_i \gets 0$}
        \EndIf
      \EndFor
      \State{\Return $x$}
    \EndFunction
  \end{algorithmic}
\end{algorithm}

Algorithm~\ref{alg:get_irreducibles_minimizers} shows a procedure to collect all join-irreducible minimizers of a $k$-submodular function.
Let $x$ be the minimum minimizer of $f$.
The function $\funcdoms{\tilde{f}}{{S_k}^n}{\setRext}$ in Algorithm~\ref{alg:get_irreducibles_minimizers} is defined as $\app{\tilde{f}}{y} \defeq \app{f}{\paren{y \sqjoin x} \sqjoin x}$ for every $y \in {S_k}^n$.
Since $\paren{\paren{y \sqjoin x} \sqjoin x}_i$ is equal to $y_i$ if $x_i = 0$ and to $x_i$ if $x_i \neq 0$, we can regard $\tilde{f}$ as a $k$-submodular function obtained by fixing each $i$-th variable of $f$ to $x_i$ if $x_i \neq 0$.
Note that the minimum values of $f$ and $\tilde{f}$ are the same.
The correctness of this algorithm is based on Lemma~\ref{lem:differential}~(1) and (2).
Namely, the set of join-irreducible minimizers of $f$ coincides with the set
\begin{align} \label{set:min}
  \setprnsep{\min \minimizers{\fixarg{\tilde{f}}{i}{\alpha}}}{i \in \intset{n} \setminus \supp{x} \wc \alpha \in \intset{k} \wc \min \fixarg{\tilde{f}}{i}{\alpha} = \min f}.
\end{align}
The algorithm collects each join-irreducible minimizer according to \eqref{set:min} by calling \textproc{GetMinimumMinimizer} at most $nk+1$ times.
Consequently, if a minimizing oracle is available, the minimizer set can also be obtained in polynomial time.

\begin{theorem} \label{thm:alg_oracle}
  The PIP-representation for the minimizer set of a $k$-submodular function $\funcdoms{f}{{S_k}^n}{\setRext}$ is obtained by $\Order{kn^2}$ calls of $k$-SFM.
\end{theorem}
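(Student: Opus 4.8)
The plan is to split the argument into a correctness claim and a complexity count, after first reducing the statement to the enumeration of $\irreducibles{\minimizers{f}}$ as explicit vectors. By Theorem~\ref{thm:closed_set_and_pip}, the PIP-representation of $\minimizers{f}$ is the triple $\paren{\irreducibles{\minimizers{f}}, \preceq, \smile}$, in which $\preceq$ is the product order on ${S_k}^n$ and $x \smile y$ means that $x \join y$ does not exist in ${S_k}^n$. Since each join-irreducible minimizer is produced as an explicit point of ${S_k}^n$, both relations can be read off by direct inspection---$\preceq$ by coordinatewise comparison and $\smile$ by testing whether the coordinatewise join leaves ${S_k}^n$---without any further oracle call. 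Hence it suffices to show that Algorithm~\ref{alg:get_irreducibles_minimizers} outputs exactly $\irreducibles{\minimizers{f}}$ and does so with $\Order{kn^2}$ calls of $k$-SFM.

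For correctness, I would first confirm that \textproc{GetMinimumMinimizer} returns $\min \minimizers{f}$: since $\minimizers{f}$ is $\paren{\sqmeet, \sqjoin}$-closed it is closed under $\meet$ and therefore contains its minimum (the meet of all its elements), so $\paren{\min \minimizers{f}}_i = 0$ holds exactly when some minimizer vanishes in coordinate $i$, equivalently when $\min \fixarg{f}{i}{0} = \min f$, while for every other coordinate a nonzero value of $\min \minimizers{f}$ is forced upward on all minimizers; the loop thus zeroes precisely the admissible coordinates and returns $\min \minimizers{f}$. Writing $x = \min \minimizers{f}$, the auxiliary function $\tilde{f}$ freezes the coordinates of $\supp x$ to the values they take on every minimizer, and suppressing these constant coordinates identifies $\minimizers{f}$ with a \emph{simple} $\paren{\sqmeet, \sqjoin}$-closed set $M$ in the sense of Section~\ref{sec:proof_of_theorem_1}; for $i \in \intset{n} \setminus \supp x$ and $\alpha \in \intset{k}$ the call of \textproc{GetMinimumMinimizer} on $\fixarg{\tilde{f}}{i}{\alpha}$ returns $\min \partfix{M}{i}{\alpha}$ whenever this set is nonempty, equivalently whenever $\min \fixarg{\tilde{f}}{i}{\alpha} = \min f$. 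The core claim is that the set~\eqref{set:min} equals $\irreducibles{M}$: by Lemma~\ref{lem:differential}~(1) each nonempty $\partfix{M}{i}{\alpha}$ has a join-irreducible minimum with $\bar{x}_i = \alpha$, and by Lemma~\ref{lem:differential}~(2) every join-irreducible element is of the form $\min \partfix{M}{i}{\alpha}$ for each index $i$ with $\bar{x}_i = \alpha \neq 0$; such an $i$ necessarily lies outside $\supp x$, since the frozen coordinates are constant across $M$ and hence carry vanishing differential. Removing the duplicate vectors (which arise exactly as predicted by Lemma~\ref{lem:differential}~(3)) then leaves precisely $\irreducibles{M} = \irreducibles{\minimizers{f}}$.

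Finally, for the complexity, each invocation of \textproc{GetMinimumMinimizer} costs at most $n+1$ oracle calls. The main routine spends one call per pair $\paren{i, \alpha}$ to evaluate the test $\min \fixarg{\tilde{f}}{i}{\alpha} = \min f$, over at most $kn$ pairs, and invokes \textproc{GetMinimumMinimizer} at most $kn + 1$ times in all; since each such invocation is $\Order{n}$ calls, the total is $\Order{kn} + \Order{kn}\cdot\Order{n} = \Order{kn^2}$, while the subsequent construction of $\preceq$ and $\smile$ on the $\Order{kn}$ output vectors is oracle-free. I expect the main obstacle to be the correctness identity~\eqref{set:min}: one must argue that restricting to indices $i \notin \supp x$ and to the fixings with $\min \fixarg{\tilde{f}}{i}{\alpha} = \min f$ captures every join-irreducible minimizer exactly once (after duplicate removal), which is precisely where the differential machinery of Lemma~\ref{lem:differential} together with the reduction to the simple case carries the weight.
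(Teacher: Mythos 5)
Your proposal is correct and follows essentially the same route as the paper: establish that Algorithm~\ref{alg:get_minimum_minimizer} returns the minimum minimizer in at most $n+1$ oracle calls, justify via Lemma~\ref{lem:differential}~(1)--(2) that the set~\eqref{set:min} collected by Algorithm~\ref{alg:get_irreducibles_minimizers} is exactly $\irreducibles{\minimizers{f}}$, and multiply the at most $kn+1$ invocations by the $\Order{n}$ cost of each. The only additions beyond the paper's own (terser) argument are the explicit remarks that the relations $\preceq$ and $\inconsistent$ are read off without further oracle calls and that the frozen coordinates reduce matters to the simple case, both of which the paper leaves implicit.
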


\begin{algorithm}[tp]
  \caption{Collect all join-irreducible minimizers of a $k$-submodular function}
  \label{alg:get_irreducibles_minimizers}
  \begin{algorithmic}[1]
    \Input  A $k$-submodular function $\funcdoms{f}{{S_k}^n}{\setRext}$
    \Output The set $\irreducibles{\minimizers{f}}$ of all join-irreducible minimizers of $f$
    \Function{GetJoinIrreducibleMinimizers}{$f$}
      \State{$x \defeq \Call{GetMinimumMinimizer}{f}$}
      \State{$\tilde{f} \defeq $ the function obtained by fixing the $i$-th variable of $f$ to $x_i$ for all $i \in \supp x$}
      \State{$J \gets \varnothing$}
      \For{$i \in \intset{n} \setminus \supp x$}
        \ForTo{$\alpha \gets 1$}{$k$}
          \If{$\min \fixarg{\tilde{f}}{i}{\alpha} = \min f$}
            \State{$J \gets J \cup \setprn{\Call{GetMinimumMinimizer}{\fixarg{\tilde{f}}{i}{\alpha}}}$}
          \EndIf
        \EndFor
      \EndFor
      \State{\Return $J$}
    \EndFunction
  \end{algorithmic}
\end{algorithm}

\subsection{Network-representable $k$-submodular functions}
\label{sec:network}

Iwata--Wahlstr\"{o}m--Yoshida~\cite{IwataY2016} introduced \emph{basic $k$-submodular functions}, which form a special class of $k$-submodular functions.
They showed a reduction of the minimization problem of a nonnegative combination of binary basic $k$-submodular functions to the minimum cut problem on a directed network.
We describe their method and present an algorithm to obtain the PIP-representation for the minimizer set.

Let $n$ and $k$ be positive integers.
We consider a directed network $N = \paren{V, A, c}$ with vertex set $V$, edge set $A$ and nonnegative edge capacity $c$.
Suppose that $V$ consists of source $s$, sink $t$ and other vertices $v_i^\alpha$, where $i \in \intset{n}$ and $\alpha \in \intset{k}$.
Let $U_i \defeq \setprn{v_i^1, v_i^2, \ldots, v_i^k}$ for $i \in \intset{n}$.
An $\paren{s,t}$-cut of $N$ is a subset $X$ of $V$ such that $s \in X$ and $t \notin X$.
We call an $\paren{s,t}$-cut $X$ \emph{legal} if $\absprn{X \cap U_i} \leq 1$ for every $i \in \intset{n}$.
There is a natural bijection $\psi$ from ${S_k}^n$ to the set of legal $\paren{s,t}$-cuts of $N$ defined by
\begin{align*}
  \app{\psi}{x} \defeq \setprn{s} \cup \setprnsep{v_i^{x_i}}{i \in \supp x}
  \quad
  \paren{x \in {S_k}^n}.
\end{align*}
See Figure~\ref{fig:example_of_legal_cut}.
\begin{figure}[t]
  \centering
  \begin{tikzpicture}[every node/.style={scale=0.8}, x=8pt, y=8pt]
    \tikzstyle{vertex}=[circle, draw, scale=0.6];
    \tikzstyle{group}=[circle, draw, rectangle, rounded corners=10pt, minimum width=70pt, minimum height=30pt, densely dotted, label=above:{#1}];
    \node[vertex] (11) at (0, 0) {};
    \node[vertex] (12) at (2, 0) {};
    \node[vertex] (13) at (4, 0) {};
    \node[group=$U_1$] at (2, 0) {};
    \node[vertex] (21) at (8, 0) {};
    \node[vertex] (22) at (10, 0) {};
    \node[vertex] (23) at (12, 0) {};
    \node[group=$U_2$] at (10, 0) {};
    \node[vertex] (31) at (16, 0) {};
    \node[vertex] (32) at (18, 0) {};
    \node[vertex] (33) at (20, 0) {};
    \node[group=$U_3$] at (18, 0) {};
    \node[vertex] (41) at (24, 0) {};
    \node[vertex] (42) at (26, 0) {};
    \node[vertex] (43) at (28, 0) {};
    \node[group=$U_4$] at (26, 0) {};
    \node[vertex] (51) at (32, 0) {};
    \node[vertex] (52) at (34, 0) {};
    \node[vertex] (53) at (36, 0) {};
    \node[group=$U_5$] at (34, 0) {};
    \node[vertex, fill, above=35pt of 32, label=above right:{$t$}] (t) {};
    \node[vertex, fill, below=25pt of 32, label=above right:{$s$}] (s) {};
    \draw[thick] (-1, -1)
      .. controls +(0, 0)   and +(0, 0)   .. (-1, 0)
      .. controls +(0, 1.5) and +(0, 1.5) .. (1, 0)
      .. controls +(0, 0)   and +(0, 0)   .. (1, -1)
      .. controls +(0, -1)  and +(-1, 0)  .. (3, -2.4)
      .. controls +(0, 0)   and +(0, 0)   .. (17, -2.4)
      .. controls +(1, 0)   and +(0, -1)  .. (19, -1)
      .. controls +(0, 0)   and +(0, 0)   .. (19, 0)
      .. controls +(0, 1.5) and +(0, 1.5) .. (21, 0)
      .. controls +(0, 0)   and +(0, 0)   .. (21, -1)
      .. controls +(0, -2)  and +(0, -2)  .. (25, -1)
      .. controls +(0, 0)   and +(0, 0)   .. (25, 0)
      .. controls +(0, 1.5) and +(0, 1.5) .. (27, 0)
      .. controls +(0, 0)   and +(0, 0)   .. (27, -1)
      .. controls +(0, -2)  and +(0, -2)  .. (31, -1)
      .. controls +(0, 0)   and +(0, 0)   .. (31, 0)
      .. controls +(0, 1.5) and +(0, 1.5) .. (33, 0)
      .. controls +(0, 0)   and +(0, 0)   .. (33, -1)
      .. controls +(0, -3)  and +(3, 0)   .. (29, -5)
      .. controls +(0, 0)   and +(0, 0)   .. (3, -5)
      .. controls +(-3, 0)  and +(0, -3)   .. (-1, -1)
    ;
    \node[right=115pt of s] {$X$};
  \end{tikzpicture}
  \caption{%
    Legal cut $X \subseteq V$ corresponding to $\paren{1,0,3,2,1} \in {S_3}^5$.
    Vertices in each $U_i$ are $v_i^1, v_i^2, v_i^3$ from left to right.
  }
  \label{fig:example_of_legal_cut}
\end{figure}

For an $\paren{s,t}$-cut $X$ of $N$, let $\legalize{X}$ denote the legal $\paren{s,t}$-cut obtained by removing vertices in $X \cap U_i$ from $X$ for every $i \in \intset{n}$ with $\absprn{U_i \cap X} \geq 2$.
The \emph{capacity} $\app{c}{X}$ of $X$ is defined as sum of capacities $\app{c}{e}$ of all edges $e$ from $X$ to $V \setminus X$.
We say that a network $N$ \emph{represents} a function $\funcdoms{f}{{S_k}^n}{\setRext}$ if it satisfies the following conditions:
\begin{namedenum}{NR}
  \item There exists a constant $K \in \setR$ such that $\app{f}{x} = \app{c}{\app{\psi}{x}} + K$ for all $x \in {S_k}^n$.
  \item It holds $\app{c}{\legalize{X}} \leq \app{c}{X}$ for all $\paren{s,t}$-cuts $X$ of $N$.
\end{namedenum}
From (NR1), the minimum value of $f - K$ is equal to the capacity of a minimum $\paren{s,t}$-cut of $N$.
For every minimum $\paren{s,t}$-cut $X$ of $N$, $\legalize{X}$ is also a minimum $\paren{s,t}$-cut since $N$ satisfies the condition (NR2).
Therefore $\app{\psi^{-1}}{\legalize{X}}$ is a minimizer of $f$, and a minimum $\paren{s,t}$-cut can be computed by maximum flow algorithms.
Indeed, Iwata--Wahlstr\"{o}m--Yoshida~\cite{IwataY2016} showed that nonnegative combinations of basic $k$-submodular functions are representable by such networks; see Iwamasa~\cite{Iwamasa2017} for further study on this network construction.

Now we shall consider obtaining the PIP-representation for the minimizer set of a $k$-submodular function $\funcdoms{f}{{S_k}^n}{\setRext}$ represented by a network $N$.
The minimizer set of $f$ is isomorphic to the family of legal minimum $\paren{s,t}$-cuts of $N$ ordered by inclusion, where the isomorphism is $\psi$.
It is well-known that the family of (not necessarily legal) minimum $\paren{s,t}$-cuts forms a distributive lattice.
Thus, by Birkhoff representation theorem, the family is efficiently representable by a poset.
Picard--Queyranne~\cite{Picard1980} showed an algorithm to obtain the poset from the residual graph corresponding to a maximum $\paren{s,t}$-flow of $N$.
We describe their theorem briefly.
For an $\paren{s,t}$-flow $\phi$ of $N$, the \emph{residual graph} corresponding to $\phi$ is a directed graph $\paren{V, A_\phi}$, where
\begin{align*}
  A_\phi \defeq \setprnsep{a \in A}{\app{\phi}{a} < \app{c}{a}} \cup \setprnsep{\paren{u, v} \in V \times V}{\text{$\paren{v, u} \in A$ and $0 < \app{\phi}{v, u}$}}.
\end{align*}

\begin{theorem}[{\cite[Theorem~1]{Picard1980}}] \label{thm:picard}
  Let $N = \paren{V, A, c}$ be a directed network with $s, t \in V$ and $G$ the residual graph corresponding to a maximum $\paren{s, t}$-flow of $N$.
  Let $\Sigma$ be the set of strongly connected components (sccs) of $G$ other than the following:
  \begin{namedenum}{}
    \item Sccs reachable from $s$.
    \item Sccs reachable to $t$.
  \end{namedenum}
  Let $\leq$ be a partial order on $\Sigma$ defined by
  \begin{align*}
    \text{$X \leq Y$ if and only if $X$ is reachable from $Y$ on $G$}
  \end{align*}
  for every $X, Y \in \Sigma$.
  The ideal family of the poset $\paren{\Sigma, \leq}$ is isomorphic to the family of minimum $\paren{s,t}$-cuts of $N$ ordered by inclusion.
  The isomorphism $\tau$ is given by $\app{\tau}{I} \defeq X_0 \cup \paren{\bigcup_{X \in I} X}$, where $X_0$ is the set of vertices reachable from $s$.
\end{theorem}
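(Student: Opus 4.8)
The plan is to prove the statement from scratch via the standard residual-graph characterization of minimum cuts, rather than merely quoting it, since the order-theoretic content is exactly what we need downstream. The central first step is to show that for a subset $S \subseteq V$ with $s \in S$ and $t \notin S$, the cut $(S, V \setminus S)$ is a minimum $(s,t)$-cut if and only if no arc of the residual graph $G = (V, A_\phi)$ leaves $S$, that is, $u \in S$ and $(u,v) \in A_\phi$ force $v \in S$. I would establish this through the usual capacity estimate: for any such $S$ one has $c(S, V \setminus S) \geq |\phi|$, with equality exactly when every original arc from $S$ to $V \setminus S$ is saturated and every original arc from $V \setminus S$ to $S$ carries no flow. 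By the definition of $A_\phi$ these two saturation conditions are precisely the statement that no forward residual arc and no backward residual arc crosses from $S$ to $V \setminus S$. Since $\phi$ is a maximum flow, $|\phi|$ equals the minimum cut value, so equality characterizes minimum cuts. Consequently, minimum $(s,t)$-cuts correspond bijectively to the subsets of $V$ that contain $s$, exclude $t$, and are closed under out-arcs of $G$ (reachability-closed sets).

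Next I would pass to the condensation of $G$. Because $\phi$ is maximum there is no residual $s$--$t$ path, so $s$ and $t$ lie in distinct strongly connected components, and no single scc is simultaneously reachable from $s$ and able to reach $t$; hence the sccs split into three disjoint groups: those reachable from $s$, those that reach $t$, and the remaining family $\Sigma$. Any reachability-closed $S$ containing $s$ must contain every vertex reachable from $s$, so $X_0 \subseteq S$; and $S$ can contain no scc that reaches $t$, since closedness would then propagate along a path to $t$ and force $t \in S$. Therefore every minimum cut has the form $S = X_0 \cup \bigcup_{X \in J} X$ for some subfamily $J \subseteq \Sigma$, which is exactly the shape of $\tau$.

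It remains to identify which subfamilies $J$ yield a reachability-closed $S$, and here the key observation is that an arc of the condensation leaving an scc $X \in \Sigma$ can never land in an scc that reaches $t$ (otherwise $X$ itself would reach $t$, contradicting $X \in \Sigma$); it can only stay inside $X$, go into $X_0$, where it is harmless, or pass to another member of $\Sigma$. Thus $X_0 \cup \bigcup_{X \in J} X$ is closed precisely when $J$ is closed under the arc relation inside $\Sigma$, i.e.\ when $J$ is an order ideal of $(\Sigma, \leq)$ under the convention that $X \leq Y$ means $X$ is reachable from $Y$. This makes $\tau$ a bijection between ideals of $(\Sigma,\leq)$ and minimum cuts, and it is manifestly inclusion-preserving in both directions, giving the claimed poset isomorphism. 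The main obstacle is the careful bookkeeping of the first step together with the disjointness and direction arguments of the last one: one must match the two saturation conditions to the exact definition of $A_\phi$, distinguishing forward from backward residual arcs, and then verify rigorously that arcs out of $\Sigma$ avoid the block of sccs reaching $t$, which is what makes the ideal condition simultaneously necessary and sufficient.
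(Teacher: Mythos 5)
Your proposal is correct. Note, however, that the paper does not prove this statement at all: it is quoted verbatim as \cite[Theorem~1]{Picard1980} and used as a black box, so there is no in-paper argument to compare against. What you have written is the standard proof of the Picard--Queyranne theorem, and all the key steps are in order: the identity $c(S) - \absprn{\phi} = \sum_{a \in \delta^+(S)}\paren{\app{c}{a} - \app{\phi}{a}} + \sum_{a \in \delta^-(S)} \app{\phi}{a}$ shows that $S$ is a minimum cut exactly when no residual arc (forward or backward) leaves $S$; maximality of $\phi$ separates the sccs reachable from $s$ from those reaching $t$; and closedness under out-arcs forces $X_0 \subseteq S$, excludes every scc reaching $t$, and reduces to the ideal condition on $\Sigma$ because no arc can exit $\Sigma$ into the block of sccs that reach $t$. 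The only point worth making fully explicit is that a reachability-closed $S$ is automatically a union of entire sccs (if it contains one vertex of a strongly connected component, closedness under out-arcs forces it to contain the whole component), which is what licenses writing $S = X_0 \cup \bigcup_{X \in J} X$; this is immediate but should be stated. With that sentence added, your argument is a complete and self-contained proof of the cited theorem.
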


Our result is the following.

\begin{theorem} \label{thm:network_algo}
  Let $N$ be a network representing a $k$-submodular function $\funcdoms{f}{{S_k}^n}{\setRext}$ and $G$ the residual graph corresponding to a maximum $\paren{s, t}$-flow of $N$.
  Let $\Sigma$ be the set of sccs of $G$ other than the following:
  \begin{namedenum}{}
    \item Sccs reachable from $s$.
    \item Sccs reachable to $t$.
    \item Sccs reachable to an scc containing two or more elements in $U_i$ for some $i \in \intset{n}$.
    \item Sccs reachable to sccs $X$ and $Y$ such that $X \neq Y$ and $\absprn{X \cap U_i} = \absprn{Y \cap U_i} = 1$ for some $i \in \intset{n}$.
  \end{namedenum}
  A partial order $\leq$ on $\Sigma$ is defined in the same way as Theorem~\ref{thm:picard}.
  Let $\inconsistent$ be a symmetric binary relation on $\Sigma$ defined as
  \begin{align*}
    & \text{$X \inconsistent Y$ if and only if there are distinct $X', Y' \in \Sigma$ such that} \\
    & \text{$X' \leq X, Y' \leq Y$ and $\absprn{X' \cap U_i} = \absprn{Y' \cap U_i} = 1$ for some $i \in \intset{n}$}.
  \end{align*}
  Then $\Sigma$ forms an elementary PIP with inconsistency relation $\inconsistent$.
  The consistent ideal family of $\Sigma$ is isomorphic to the minimizer set of $f$, where the isomorphism is $\psi^{-1} \circ \tau$.
\end{theorem}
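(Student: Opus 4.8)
The plan is to build on the Picard--Queyranne description of minimum $\paren{s,t}$-cuts (Theorem~\ref{thm:picard}) and to translate the legality constraint $\absprn{X \cap U_i} \le 1$ into the combinatorics of $\paren{\Sigma, \leq, \inconsistent}$. Write $\Sigma_0$ for the Picard--Queyranne poset of all sccs other than those reachable from $s$ or reaching $t$, and $\tau$ for its isomorphism onto the lattice of all minimum cuts. Since $\minimizers{f}$ is carried by $\psi^{-1}$ onto the family of legal minimum cuts (as recorded just before Theorem~\ref{thm:picard}), the whole statement will follow once I establish three things: (I) $\Sigma$ is a down-set of $\Sigma_0$ on which $\tau$ restricts; (II) for an ideal $I$ of $\Sigma$, the cut $\app{\tau}{I}$ is legal if and only if $I$ is consistent, so that $I \mapsto \app{\psi^{-1}}{\app{\tau}{I}}$ is an order isomorphism from $\consideals{\Sigma}$ onto $\minimizers{f}$; and (III) $\paren{\Sigma, \leq, \inconsistent}$ satisfies (IC1) and (IC2). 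Elementariness then comes for free: once $\consideals{\Sigma}$ is isomorphic to the $\paren{\sqmeet, \sqjoin}$-closed set $\minimizers{f}$, Corollary~\ref{cor:elementary_pip} applies directly, so I never have to verify (EP0)--(EP2) by hand.

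Several structural facts prepare the ground. First, every $X \in \Sigma$ has $\absprn{X \cap U_i} \le 1$, since a doubly occupied scc is reachable to itself and hence removed by the type~(3) condition. Second, the least minimum cut $\app{\tau}{\varnothing} = X_0$ is legal: $\legalize{X_0} \subseteq X_0$ is again a minimum cut by (NR2), and $X_0$ is smallest, so $\legalize{X_0} = X_0$. Third, $X_0$ lies in every minimum cut, so each $v_i^{a} \in X_0 \cap U_i$ belongs to $\app{\tau}{I}$ for all ideals $I$. Finally, for (I): if $X \in \Sigma$ and $Y \le X$ with $Y \in \Sigma_0$, then every bad scc reachable from $Y$ is reachable from $X$, so $Y$ escapes the type~(3),(4) exclusions and $Y \in \Sigma$; hence each down-set of $\Sigma$ is a genuine ideal of $\Sigma_0$ and $\app{\tau}{I}$ is a well-defined minimum cut.

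The core is (II), and its delicate point is the interaction of an ideal with $X_0$. As the sccs in $I$ are pairwise disjoint and disjoint from $X_0$, legality at index $i$ amounts to $\absprn{X_0 \cap U_i} + \absprn{\setprnsep{X \in I}{\absprn{X \cap U_i} = 1}} \le 1$. Two failures are conceivable: (b) two distinct singly occupied sccs of $U_i$ lie in $I$; and (a) one lies in $I$ while $X_0 \cap U_i \ne \varnothing$. Failure (b) is exactly what $\inconsistent$ forbids, since two such sccs form an inconsistent pair and a down-closed set containing an inconsistent pair contains its singly occupied witnesses; thus (b) is avoided precisely when $I$ is consistent. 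The main obstacle is ruling out (a), which $\inconsistent$ does not mention, and I expect to dispatch it with the $X_0$ fact above: if $v_i^{a} \in X_0 \cap U_i$ and some $X \in \Sigma$ had $v_i^{b} \in X$ (note $b \ne a$, as $X_0$ and $X$ are disjoint), then the minimum cut $C \defeq X_0 \cup \setprnsep{Z}{Z \le X}$ would have $\absprn{C \cap U_i} \ge 2$, so by (NR2) the minimum cut $\legalize{C}$ would satisfy $\legalize{C} \cap U_i = \varnothing$; but $v_i^{a} \in X_0 \subseteq \legalize{C}$, a contradiction. Hence no singly occupied scc of $U_i$ survives once $X_0$ occupies $U_i$, so (a) cannot arise and legality is equivalent to consistency. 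For surjectivity, a legal cut's defining ideal must avoid the type~(3),(4) sccs (each would force a doubly occupied or doubly hit $U_i$), hence lies in $\Sigma$; Picard--Queyranne injectivity and order-preservation of $\tau$ and $\psi^{-1}$ then give the claimed order isomorphism onto $\minimizers{f}$.

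It remains to verify (III), which is routine. For (IC2), any inconsistency witness for a pair below $\setprn{X', Y'}$ is, by transitivity of $\le$, also a witness for $\setprn{X, Y}$, so $\inconsistent$ is upward closed. For (IC1), if $X \inconsistent Y$ via distinct singly occupied witnesses $X', Y'$ and some $R \in \Sigma$ bounded both $X$ and $Y$ from above, then $R$ would be reachable to both $X'$ and $Y'$, which is precisely the type~(4) condition and contradicts $R \in \Sigma$. Thus $\paren{\Sigma, \leq, \inconsistent}$ is a PIP, and combining the isomorphism from (II) with Corollary~\ref{cor:elementary_pip} completes the proof.
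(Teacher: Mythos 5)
Your proposal is correct and follows essentially the same route as the paper's proof: the Picard--Queyranne correspondence, a direct verification of (IC1)/(IC2) via exclusion rule (4), the translation of legality into consistency, and an appeal to Corollary~\ref{cor:elementary_pip} for elementariness. In fact your explicit handling of the case where $X_0$ already occupies some $U_i$ (ruled out via (NR2) and the fact that $X_0$ is contained in every minimum cut) spells out a point that the paper's own proof leaves implicit when it asserts that $\app{\tau}{I}$ is legal.
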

\begin{proof}
  First we prove that $\Sigma$ is a PIP.
  We can see that $\inconsistent$ satisfies the condition (IC1) since for every $X, Y \in \Sigma$ with $X \inconsistent Y$, an scc $Z$ reachable to $X$ and $Y$ does not belong to $\Sigma$ according to the above exclusion rule (4).
  The condition (IC2) is also satisfied from the definition of the relation $\inconsistent$.
  Thus $\Sigma$ forms a PIP.
  
  Next we show $\app{\psi^{-1}}{\app{\tau}{I}} \in \minimizers{f}$ for every consistent ideal $I$ of $\Sigma$.
  Let $\Sigma'$ be the poset given in Theorem~\ref{thm:picard}.
  Note that $\Sigma$ is a subposet of $\Sigma'$.
  We show that $I$ is an ideal of $\Sigma'$.
  Suppose not.
  Then there exist $X \in I$ and $Y \in \Sigma' \setminus \Sigma$ such that $Y$ is reachable from $X$ and meets the above exclusion rules (3) or (4).
  Now since $X$ also satisfies the same exclusion rule, $X$ does not belong to $\Sigma$.
  This is a contradiction.
  Hence $I$ is an ideal of $\Sigma'$, and $\app{\tau}{I}$ is a minimum $\paren{s,t}$-cut (Theorem~\ref{thm:picard}).
  Moreover, from the exclusion rule (3) and the definition of $\inconsistent$, we can see that $\app{\tau}{I}$ is legal.
  Therefore $\app{\psi^{-1}}{\app{\tau}{I}}$ is a minimizer of $f$.
  
  Conversely, let $x \in {S_k}^n$ be a minimizer of $f$.
  Since $\app{\psi}{x}$ is a minimum $\paren{s,t}$-cut, $I \defeq \app{{\tau}^{-1}}{\app{\psi}{x}}$ is an ideal of $\Sigma'$ (Theorem~\ref{thm:picard}).
  Suppose that $I \nsubseteq \Sigma$.
  Then there exists $X \in I \setminus \Sigma$ which meets the exclusion rule (3) or (4).
  Suppose that $X$ meets the rule (3).
  Then $X$ is reachable to an scc $Y$ such that $\absprn{Y \cap U_i} \geq 2$ for some $i \in \intset{n}$.
  Now $Y$ is not reachable to $t$ since $X$ is not reachable to $t$.
  Thus $Y$ meets the rule (1) or belongs to $I$ otherwise.
  In either case it holds $Y \subseteq \app{\psi}{x}$.
  This contradicts the fact that $\app{\psi}{x}$ is legal.
  A similar argument can also be applied in the case where $X$ meets the rule (4).
  Therefore $I \subseteq \Sigma$ holds, and $I$ is an ideal of $\Sigma$ since $\Sigma$ is a subposet of $\Sigma'$.
  The consistency of $I$ is an immediate consequence of the fact that $\app{\psi}{x}$ is legal.
  
  Now we have shown that $\psi^{-1} \circ \tau$ is a bijection from $\consideals{\Sigma}$ to $\minimizers{f}$.
  In addition, $\psi^{-1} \circ \tau$ clearly preserves the orders, hence it is an isomorphism.
  Finally from Corollary~\ref{cor:elementary_pip}, $\Sigma$ is elementary.
\end{proof}

Algorithm~\ref{alg:remove_sccs} shows a procedure to obtain $\Sigma$ from the residual graph $G$.
First we can obtain the sccs of $G$ in $\Order{kn + \tilde{m}}$ time, where $\tilde{m} \defeq \absprn{A}$.
Additionally, the exclusion rules (1), (2) and (3) can be applied to the sccs in the same time complexity.
Hence it is only the exclusion rule (4) that we should carefully take account of.
An efficient way is described in Line 4 to 10 in Algorithm~\ref{alg:remove_sccs}.
For each scc $X$, the algorithm memorizes the set $U_X$ of vertices reachable to $X$.
Now since the size of each $U_X$ is $\Order{n}$ at any moment, Algorithm~\ref{alg:remove_sccs} runs in $\Order{\absprn{V} + n\absprn{A}} = \Order{kn + n\tilde{m}}$ time.
Therefore the time complexity for obtaining $\Sigma$ from $G$ is much less than the one for computing $G$ from the network $N$.
Consequently, we obtain the following theorem:

\begin{algorithm}[tp]
  \caption{Obtain sccs which do not meet the exclusion rules}
  \label{alg:remove_sccs}
  \begin{algorithmic}[1]
    \Input  The residual graph $G = \paren{V, A_\phi}$ corresponding to a maximum $\paren{s,t}$-flow $\phi$
    \Output The set $\Sigma$ of sccs of $G$ defined in Theorem~\ref{thm:network_algo}
    \Function{ApplyExclusionRules}{$G$}
      \State{$\Sigma \gets \text{the set of sccs of $G$}$}
      \State{Remove all sccs from $\Sigma$ which meet the exclusion rules (1), (2) or (3)}
      \For{$X \in \Sigma$ in the reverse topological order of $G$}
        \State{$U_X \gets X$}
        \State{$\mathcal{Y} \defeq \setprnsep{Y \in \Sigma}{\text{there is an edge $\paren{x, y} \in A_\phi$ for some $x \in X$ and $y \in Y$}}$}
        \For{$Y \in \mathcal{Y}$}
          \State{$U_X \gets U_X \cup U_Y$}
          \If{$\absprn{U_X \cap U_i} \geq 2$ for some $i \in \intset{n}$}
            \State{Remove all sccs from $\Sigma$ which are reachable to $X$, and go to Line 4}
          \EndIf
        \EndFor
      \EndFor
      \State{\Return $\Sigma$}
    \EndFunction
  \end{algorithmic}
\end{algorithm}

\begin{theorem} \label{thm:network_time}
  Let $\funcdoms{f}{{S_k}^n}{\setRext}$ be a $k$-submodular function represented by a network $N$ with $\tilde{m}$ edges.
  The PIP-representation for the minimizer set of $f$ is obtained in $\Order{\maxflow{kn}{\tilde{m}}}$ time.
\end{theorem}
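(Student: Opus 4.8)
The plan is to exhibit a concrete procedure realizing the representation promised by Theorem~\ref{thm:network_algo} and then bound the cost of each phase, showing that the maximum-flow computation dominates everything else. Concretely, I would proceed in four phases: (1) compute a maximum $\paren{s,t}$-flow $\phi$ in $N$; (2) build the residual graph $G = \paren{V, A_\phi}$; (3) run Algorithm~\ref{alg:remove_sccs} to obtain the ground set $\Sigma$ together with the reachability order $\leq$ coming from the condensation of $G$; and (4) read off the inconsistency relation $\inconsistent$. By Theorem~\ref{thm:network_algo} the resulting triple $\paren{\Sigma, \leq, \inconsistent}$ is exactly the elementary PIP representing $\minimizers{f}$, so correctness is already in hand and only the time bound remains to be verified.

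For the cost accounting, I would note that $N$ has $\absprn{V} = \Order{kn}$ vertices and $\tilde{m}$ edges, so phase~(1) takes $\maxflow{kn}{\tilde{m}}$ time by definition. Phase~(2) is linear, $\Order{kn + \tilde{m}}$, since $\absprn{A_\phi} \leq 2\tilde{m}$. For phase~(3) I would invoke the analysis already given for Algorithm~\ref{alg:remove_sccs}: the sccs and the exclusion rules (1)--(3) are handled in $\Order{kn + \tilde{m}}$ time, while rule~(4) is applied through the memoized sets $U_X$ of vertices reachable to each scc $X$; since each $U_X$ has size $\Order{n}$ at every moment, the whole procedure runs in $\Order{kn + n\tilde{m}}$ time. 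For phase~(4), while scanning $V$ one records, for each $i \in \intset{n}$, which sccs meet $U_i$ in a single vertex; this membership information determines $\inconsistent$ (equivalently the minimal inconsistency relation) directly by its definition in Theorem~\ref{thm:network_algo}, and costs no more than $\Order{kn}$.

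It then remains to combine the bounds. Using the standing assumption that $\maxflow{n}{m}$ is not less than $\Order{nm}$, i.e. $nm = \Order{\maxflow{n}{m}}$, we obtain $kn\tilde{m} = \Order{\maxflow{kn}{\tilde{m}}}$, whence $kn + n\tilde{m} = \Order{kn\tilde{m}} = \Order{\maxflow{kn}{\tilde{m}}}$. Thus every phase after the flow computation is absorbed into $\Order{\maxflow{kn}{\tilde{m}}}$, and the total running time is $\Order{\maxflow{kn}{\tilde{m}}}$, as claimed.

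I expect the only delicate point to be the justification that phase~(3) stays within $\Order{kn + n\tilde{m}}$ --- specifically the amortized argument that repeatedly enlarging and testing the sets $U_X$, and deleting the sccs that violate rule~(4), does not blow up the running time. This is exactly the bookkeeping already carried out for Algorithm~\ref{alg:remove_sccs}, so I would lean on that analysis rather than repeat it, and otherwise the proof is a routine summation of per-phase costs against the max-flow term.
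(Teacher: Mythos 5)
Your proposal is correct and follows essentially the same route as the paper: the paper's argument is precisely the paragraph preceding the theorem, which bounds Algorithm~\ref{alg:remove_sccs} by $\Order{kn + n\tilde{m}}$ via the memoized sets $U_X$ and then observes that this is dominated by the max-flow computation under the standing assumption $\maxflow{kn}{\tilde{m}} = \Omega\paren{kn\tilde{m}}$. Your additional remark that the inconsistency relation should be stored implicitly via the per-index membership data (rather than as an explicit list of all inconsistent pairs, which could exceed the budget) is a sensible elaboration of a step the paper leaves tacit.
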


\subsection{Potts $k$-submodular functions}
\label{sec:potts}

Here we consider a practically important subclass of network representable $k$-submodular functions, called \emph{Potts $k$-submodular functions}. 
Let $\paren{V, E}$ be a connected undirected graph on vertex set $V = \intset{n}$ with $m = \absprn{E}$, where each edge $\setprn{i, j} \in E$ has a positive edge weight $\lambda_{i, j}$.
Let $\intset{k}$ be the set of labels.
A Potts $k$-submodular function is a $k$-submodular function $\funcdoms{\tilde{g}}{{S_k}^n}{\setR}$ of the following form:
\begin{align} \label{eqn:tilde_g}
  \app{\tilde{g}}{x} = \sum_{i=1}^n \app{\tilde{g_i}}{x_i} + \sum_{\setprn{i, j} \in E} \lambda_{i, j} \app{d}{x_i, x_j}
  \quad
  \paren{x \in {S_k}^n},
\end{align}
where $g_i$ is any $k$-submodular function on $S_k$ for each $i \in \intset{n}$ and $d$ is a $k$-submodular function on ${S_k}^2$ defined by
\begin{align*}
  \app{d}{a, b} \defeq \begin{cases}
    1   & \paren{0 \neq a \neq b \neq 0}, \\
    0   & \paren{a = b}, \\
    1/2 & \mathrm{otherwise}
  \end{cases}
\end{align*}
for each $a,b \in S_k$.
A Potts $k$-submodular function is naturally associated with a \emph{Potts energy function} $\funcdoms{g}{\intset{k}^n}{\setR}$:
\begin{align} \label{eq:potts}
  \app{g}{x} = \sum_{i=1}^n \app{g_i}{x_i} + \sum_{\setprn{i, j} \in E} \lambda_{i, j} \app{1_{\neq}}{x_i, x_j}
  \quad \paren{x \in \intset{k}^n},
\end{align}
where $g_i$ is any function on $\intset{k}$ for each $i \in \intset{n}$ and $\funcdoms{1_{\neq}}{\intset{k}^2}{\setR}$ is defined by $\app{1_{\neq}}{\alpha, \beta} \defeq 1$ if $\alpha \neq \beta$ and $\app{1_{\neq}}{\alpha, \beta} \defeq 0$ if $\alpha = \beta$.

Finding a labeling $x \in \intset{k}^n$ of the minimum Potts energy is NP-hard for $k \geq 3$ but particularly important in computer vision applications.
Useful information of optimal labelings of the Potts energy can be extracted from a minimizer of a Potts $k$-submodular function with appropriate $k$-submodular functions $\tilde{g}_i$.
Define each $\tilde{g}_i$ by $\app{\tilde{g}_i}{\alpha} \defeq \app{g_i}{\alpha}$ for $\alpha \in \intset{k}$ and $\app{\tilde{g}_i}{0} \defeq \min_\condition{\beta, \gamma \in \intset{k}}{\beta \neq \gamma} \paren{\app{g_i}{\beta} + \app{g_i}{\gamma}}/2$.
In this case, $\tilde{g}$ is a \emph{$k$-submodular relaxation} of $g$, 
and an optimal labeling of $g$ is a partially recovered from a minimizer of $g$; see the next section.
Another choice of $\tilde{g_i}$ is: $\app{\tilde{g_i}}{\alpha} \defeq \paren{\app{g_i}{\alpha} - \min_{\beta \in \intset{k} \setminus \setprn{\alpha}} \app{g_i}{\beta}}/2$ for $\alpha \in \intset{k}$ and $\app{\tilde{g}}{0} \defeq 0$.
Also in this case, a part of an optimal labeling is obtained from a minimizer of $\tilde{g}$, and coincides with Kovtun's partial labeling~\cite{Gridchyn2013, Kovtun2003}.
 
The goal of this section is to develop a fast algorithm to construct the PIP of a Potts $k$-submodular function $\tilde{g}$.
Notice that $\tilde{g}$ is network-representable with $km$ edges~\cite{IwataY2016}.
Therefore we can obtain a minimizer as well as the PIP-representation for $\tilde{g}$ in $\Order{\maxflow{kn}{km}}$ time by the network construction in the previous section.
However it is hard to apply this algorithm to the vision application with large $k \paren{\sim 60}$ in~\cite{Gridchyn2013}.
Gridchyn--Kolmogorov~\cite{Gridchyn2013} developed an $\Order{\log k \cdot \maxflow{n}{m}}$-time algorithm to find a minimizer of $\tilde{g}$.
The main theorem in this section is a stronger result that the PIP-representation is also obtained in the same time complexity.
\begin{theorem} \label{thm:Potts}
  The PIP-representation for the minimizer set of $\tilde{g}$ is obtained in\\
  $\Order{\log k \cdot \maxflow{n}{m}}$ time.
\end{theorem}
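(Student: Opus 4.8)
The plan is to avoid the full $\Order{kn}$-vertex network of Section~\ref{sec:network} and instead assemble the elementary PIP from only $\Order{\log k}$ maximum-flow computations on networks with $\Order{n}$ vertices and $\Order{m}$ edges, exactly as Gridchyn--Kolmogorov~\cite{Gridchyn2013} do for finding a single minimizer. The crucial ingredient is the structural result Theorem~\ref{thm:PIP_Potts}, which describes the order $\preceq$ and the inconsistency relation $\inconsistent$ of the PIP $\irreducibles{\minimizers{\tilde{g}}}$ in terms of separation data that is uniform across labels; this is what lets us recover the whole PIP, and not merely a single minimizer, from the same coarse computations.

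Concretely, I would first set up the Gridchyn--Kolmogorov divide-and-conquer on the label set $\intset{k}$: bisect $\intset{k}$ into two halves, build an auxiliary network on $\Order{n}$ vertices whose minimum cut decides, for every variable, which half (or the $0$-label) its component lies in, and recurse on each half. This produces a recursion tree of depth $\Order{\log k}$, and at each of its $\Order{\log k}$ levels the total work is a single maximum flow, i.e.\ $\maxflow{n}{m}$. The multiflow-theoretic results~\cite{Hirai2010, Ibaraki1998} are used here to certify that minimizing the Potts relaxation reduces to these $\Order{n}$-vertex cut problems and that the cuts arising at different levels are mutually compatible, i.e.\ nested in the way the recursion requires.

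Next, at each recursion node I would not merely read off one minimum cut but, via Picard--Queyranne (Theorem~\ref{thm:picard}), extract from the residual graph the entire distributive lattice of minimum cuts of that subproblem, encoded as a poset. Theorem~\ref{thm:PIP_Potts} then prescribes how to place these per-level posets into the parts $P_1, \dots, P_n$ and which inconsistency relations to add between them. Assembling the pieces yields a candidate PIP; I would verify that its consistent ideal family is isomorphic to $\minimizers{\tilde{g}}$ (matching the ground-truth construction of Theorem~\ref{thm:network_algo}), and that it is elementary, the latter being immediate from Corollary~\ref{cor:elementary_pip} once the isomorphism with $\minimizers{\tilde{g}}$ is in hand.

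Finally, the running time is dominated by the $\Order{\log k}$ flows, for a total of $\Order{\log k \cdot \maxflow{n}{m}}$; the combinatorial assembly is of lower order because it manipulates only $\Order{kn}$ join-irreducibles (Proposition~\ref{prop:number_of_irreducibles}), whereas a single max-flow already costs at least $\Order{nm}$. The main obstacle is the correctness of the assembly step: proving that the order and, especially, the inconsistency relations of the true PIP are recovered exactly---with no minimizer missed and none spuriously introduced---from only logarithmically many coarse cuts. This is precisely where Theorem~\ref{thm:PIP_Potts} and the multiflow results carry the weight, since they supply the nested, compatible structure of minimum cuts across recursion levels, without which an inconsistency spanning two different label-halves could not be certified.
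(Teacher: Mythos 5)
Your high-level architecture matches the paper's: a divide-and-conquer over the label set with $\Order{\log k}$ levels of max-flow work, Picard--Queyranne posets $\Sigma_\alpha$ for the minimum $s_\alpha$-isolating cuts, and the gluing of these posets into the elementary PIP via Theorem~\ref{thm:PIP_Potts}. But the step where you ``extract from the residual graph the entire distributive lattice of minimum cuts \emph{of that subproblem}'' at each recursion node is exactly where the argument breaks. The recursion works by choosing one minimum cut $X$ separating $S'$ from $S''$ and contracting $\tilde{V} \setminus X$ (resp.\ $X$) to form the subnetworks. Uncrossing guarantees that every minimum $\alpha$-cut of the contracted network is a minimum $\alpha$-cut of $N$, but \emph{not} conversely: a minimum $\alpha$-cut $Y$ of $N$ with $s_\alpha \in S'$ need not satisfy $Y \subseteq X$ (only $Y \cap X$ is guaranteed to be a minimum $\alpha$-cut), so $Y$ has no counterpart in the contracted subproblem. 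A path $s_1 - v - s_2$ with unit capacities already shows this: with $X = \setprn{s_1}$ chosen, the minimum $s_1$-isolating cut $\setprn{s_1, v}$ of $N$ is invisible in $N'$. Hence the Picard--Queyranne posets read off at the recursion nodes represent, in general, only a proper subfamily of the minimum $\alpha$-cuts of $N$, and the PIP you assemble would miss minimizers of $\tilde{g}$ --- precisely the failure mode you flag as ``the main obstacle'' but do not resolve.

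What closes this gap in the paper is a device you never invoke: a \emph{locking multiflow}, i.e.\ a single multiflow $f$ in $N$ whose submultiflow $f_\alpha$ is simultaneously a maximum $\alpha$-flow for every $\alpha \in \intset{k}$ (existence by the locking theorem of~\cite{Hirai2010}). The IKN-style recursion~\cite{Ibaraki1998} is used not to read off cut structure at the nodes but to compute this multiflow: after recursing, the two sub-multiflows are \emph{aggregated} --- paths crossing the separating cut are concatenated and flow values rebalanced --- into a locking multiflow on the original $N$. Only then, for each $\alpha$, is the residual graph $G_\alpha$ of $f_\alpha$ formed in (essentially) $N$ itself, so that Theorem~\ref{thm:picard} yields the \emph{full} lattice of minimum $\alpha$-cuts of $N$; Lemma~\ref{lem:intersection_of_isolating_cuts} then justifies the gluing. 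The aggregation costs $\Order{nm}$ per level, which is what the induction $\app{T}{k,n,m} \leq Cnm\log k$ actually has to absorb. Your proposal assigns the multiflow references only a vague ``compatibility'' role and omits both the locking property and the aggregation, so as written the construction is incomplete.
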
 
The rest of this subsection is devoted to proving this theorem. 
First we construct a network $N$, different from the one in the previous section.
For each $i \in \intset{n}$, decompose $\tilde{g}_i$ as follows.
Let $\funcdoms{1_{=}}{{S_k}^2}{\setR}$ be defined by $\app{1_{=}}{a, b} \defeq 1$ if $a = b$ and $\app{1_{=}}{a, b} \defeq 0$ otherwise.
Choose a minimizer $\gamma_i \in S_k$ of $\tilde{g}_i$.
Then  $\tilde{g}_i$ is represented as
\begin{align*}
  \app{\tilde{g}_i}{x_i}
    = \app{\tilde{g}_i}{\gamma_i}
    + \mu_i \app{d}{\gamma_i, x_i}
    + \sum_{\alpha \in \intset{k} \setminus \setprn{\gamma_i}} \sigma_{i, \alpha} \app{1_{=}}{\alpha, x_i},
\end{align*}
where $\mu_i \defeq 2\paren{\app{\tilde{g}_i}{0} - \app{\tilde{g}_i}{\gamma_i}} \, \paren{\geq 0}$ and
$\sigma_{i, \alpha} \defeq \app{\tilde{g}_i}{\alpha} - 2 \app{\tilde{g}_i}{0} + \app{\tilde{g}_i}{\gamma_i}$ for $\alpha \in \intset{k}$.
We remark that $\sigma_{i, \alpha}$ is nonnegative by $k$-submodularity, and that $\mu_i > 0$ implies $\gamma_i \neq 0$.

Let us construct $N$.
Starting from $\paren{V, E}$, define the edge-capacity $\app{c}{\setprn{i, j}}$ of each edge $\setprn{i, j} \in E$ by $\lambda_{i, j}$.
Next add new vertices $s_1, s_2, \ldots, s_k$, called \emph{terminals}.
For each $i \in \intset{n}$, if $\mu_i > 0$ with $\alpha = \gamma_i \in \intset{k}$, add a new edge $\setprn{i, s_\alpha}$ of capacity $\app{c}{\setprn{i, s_\alpha}} \defeq \mu_i$.
An edge $\setprn{i, s_\alpha}$ is called a \emph{terminal edge}.
For each $\alpha \in \intset{k}$ with $\sigma_{i, \alpha} > 0$, add a new vertex $i^\alpha$ and a new edge $\setprn{i, i^\alpha}$ of capacity $\app{c}{\setprn{i, i^\alpha}} \defeq 2 \sigma_{i, \alpha}$.
A vertex $i^\alpha$ is called the \emph{$\alpha$-fringe} of $i$. 
Let $S \defeq \setprn{s_1, s_2, \ldots, s_k}$. 
Let $V_0$ be the set of all fringes, $E_0$ the set of all edges incident to fringes, and $E_S$ the set of all terminal edges.
Let $\tilde{V} \defeq V \cup V_0 \cup S$ and $\tilde{E} \defeq E \cup E_0 \cup E_S$.
Let $N = \paren{\tilde{V}, \tilde{E}, c}$ be the resulting network.

Second we show that $\tilde{g}$ is represented as a certain multicut function in $N$.
For a vertex subset $X$, the cut capacity $\app{c}{X}$ of $X$ is the sum of $\app{c}{e}$ of all edges $e$ between $X$ and $V \setminus X$.
For $\alpha \in \intset{k}$, a vertex subset $X$ is called an \emph{$s_\alpha$-isolating cut} or \emph{$\alpha$-cut} if $s_\alpha \in X$, $s_\beta \not \in X$ for $\beta \in \intset{k} \setminus \setprn{\alpha}$, and $X$ contains no $\alpha$-fringe.
A \emph{semi-multicut} is an ordered partition $\paren{X_0, X_1, \ldots, X_k}$ of $\tilde{V}$ such that $X_\alpha$ is an $\alpha$-cut for each $\alpha \in \intset{k}$.
The \emph{capacity} $\app{c}{\mathcal{X}}$ of a semi-multicut $\mathcal{X} = \paren{X_0, X_1, \ldots, X_k}$ is defined by
\begin{align*}
  \app{c}{\mathcal{X}} \defeq \frac{1}{2} \sum_{\alpha \in \intset{k}} \app{c}{X_\alpha}.
\end{align*}
An \emph{admissible} semi-multicut is a semi-multicut $\paren{X_0, X_1, X_2, \ldots, X_k}$ such that for each $\alpha \in \intset{k}$, each $\alpha$-fringe $i^\alpha$ belongs to $X_0$ if $i \in X_0 \cup X_\alpha$ and belongs to $X_\beta$ if $i \in X_\beta$ for $\beta \in \intset{k} \setminus \setprn{\alpha}$.
Observe that the part to which a fringe of $i \in \intset{n}$ belongs is uniquely determined from the part which $i$ belongs.
For an admissible semi-multicut $\mathcal{X} = \paren{X_0, X_1, \ldots, X_k}$, define $\app{x}{\mathcal{X}} = \paren{\app{x_1}{\mathcal{X}}, \app{x_2}{\mathcal{X}}, \ldots, \app{x_n}{\mathcal{X}}} \in {S_k}^n$ by $\app{x_i}{\mathcal{X}} \defeq a \in S_k$ if and only if $i \in X_a$.
This map $\mathcal{X} \mapsto \app{x}{\mathcal{X}}$ is a bijection from the family of all admissible semi-multicuts to ${S_k}^n$; see Figure~\ref{fig:example_of_admissible_semimultiway_cut}.
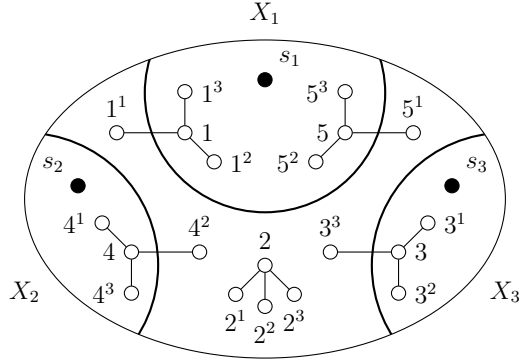
\begin{figure}[t]
  \centering
  \begin{tikzpicture}[every node/.style={scale=0.8}, x=5pt, y=5pt]
    \node at (0, 70pt) {$X_1$};
    \node at (-90pt, -35pt) {$X_2$};
    \node at ( 90pt, -35pt) {$X_3$};
    \begin{scope}
      \tikzstyle{vertex}=[circle, draw, scale=0.6];
      \draw[name path=oval] ellipse (90pt and 60pt);
      \clip (0,0) ellipse (90pt and 60pt);
      \draw[name path=x1, thick] (  0pt,  40pt) circle (45pt);
      \draw[name path=x2, thick] (-90pt, -25pt) circle (50pt);
      \draw[name path=x3, thick] ( 90pt, -25pt) circle (50pt);
      \node[vertex, label=right:1]                            (1) at (-30pt, 25pt) {};
      \node[vertex,        left=20pt of 1, label=above:$1^1$] (11) {};
      \node[vertex, below right=10pt of 1, label=right:$1^2$] (12) {};
      \node[vertex,       above=10pt of 1, label=right:$1^3$] (13) {};
      \draw (1) -- (11);
      \draw (1) -- (12);
      \draw (1) -- (13);
      \node[vertex, label=above:2]                           (2) at (0pt, -25pt) {};
      \node[vertex, below left=10pt of 2, label=below:$2^1$] (21) {};
      \node[vertex, below=10pt of 2, label=below:$2^2$]      (22) {};
      \node[vertex, below right=10pt of 2, label=below:$2^3$]     (23) {};
      \draw (2) -- (21);
      \draw (2) -- (22);
      \draw (2) -- (23);
      \node[vertex, label=right:3]                            (3) at (50pt, -20pt) {};
      \node[vertex, above right=10pt of 3, label=right:$3^1$] (31) {};
      \node[vertex, below=10pt of 3, label=right:$3^2$]       (32) {};
      \node[vertex, left=20pt of 3, label=above:$3^3$]        (33) {};
      \draw (3) -- (31);
      \draw (3) -- (32);
      \draw (3) -- (33);
      \node[vertex, label=left:4]                           (4) at (-50pt, -20pt) {};
      \node[vertex, above left=10pt of 4, label=left:$4^1$] (41) {};
      \node[vertex, right=20pt of 4, label=above:$4^2$]     (42) {};
      \node[vertex, below=10pt of 4, label=left:$4^3$]      (43) {};
      \draw (4) -- (41);
      \draw (4) -- (42);
      \draw (4) -- (43);
      \node[vertex, label=left:5]                            (5) at (30pt, 25pt) {};
      \node[vertex,      right=20pt of 5, label=above:$5^1$] (51) {};
      \node[vertex, below left=10pt of 5, label=left:$5^2$]  (52) {};
      \node[vertex,      above=10pt of 5, label=left:$5^3$]  (53) {};
      \draw (5) -- (51);
      \draw (5) -- (52);
      \draw (5) -- (53);
      \node[vertex, fill, label=above right:$s_1$] (s1) at (  0pt, 45pt) {};
      \node[vertex, fill, label=above left:$s_2$]  (s3) at (-70pt,  5pt) {};
      \node[vertex, fill, label=above right:$s_3$]  (s3) at ( 70pt,  5pt) {};
    \end{scope}
  \end{tikzpicture}
  \caption{Admissible semi-multiway cut $\mathcal{X} = \paren{X_0, X_1, X_2, X_3}$ corresponding to $\paren{1,0,3,2,1} \in {S_3}^5$.}
  \label{fig:example_of_admissible_semimultiway_cut}
\end{figure}
The next lemma says that a $k$-submodular function $\tilde{g}$ is actually represented by capacities of admissible semi-multicuts.
\begin{lemma}
  For any admissible semi-multicut $\mathcal{X}$, it holds
  \begin{align*}
    \app{\tilde{g}}{\app{x}{\mathcal{X}}}
      = \app{c}{\mathcal{X}} 
      + \sum_{i \in \intset{n}} \app{\tilde{g}_i}{\gamma_i}.
  \end{align*}
\end{lemma}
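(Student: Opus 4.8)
The plan is to expand the semi-multicut capacity $\app{c}{\mathcal{X}} = \frac{1}{2}\sum_{\alpha \in \intset{k}} \app{c}{X_\alpha}$ edge by edge and to match the resulting contributions against the three summands of the decomposition of $\tilde{g}_i$ established just above, namely
\[
  \app{\tilde{g}_i}{x_i} = \app{\tilde{g}_i}{\gamma_i} + \mu_i \app{d}{\gamma_i, x_i} + \sum_{\alpha \in \intset{k} \setminus \setprn{\gamma_i}} \sigma_{i, \alpha} \app{1_{=}}{\alpha, x_i}.
\]
Writing $x \defeq \app{x}{\mathcal{X}}$ and using that $i \in X_{x_i}$ by the definition of the bijection $\mathcal{X} \mapsto \app{x}{\mathcal{X}}$, it then suffices to identify the contribution of each edge of $N$ to $\app{c}{\mathcal{X}}$ with the matching term on the right-hand side.

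The key observation is a \emph{counting rule} for $\app{c}{\mathcal{X}} = \frac{1}{2}\sum_{\alpha \in \intset{k}} \app{c}{X_\alpha}$. Since $\paren{X_0, X_1, \ldots, X_k}$ is a partition and the sum ranges only over $\alpha \in \intset{k}$, excluding $X_0$, an edge $e$ with endpoints in parts $X_a$ and $X_b$ contributes to $\app{c}{\mathcal{X}}$ as follows: its full capacity $\app{c}{e}$ if $a, b \in \intset{k}$ are distinct (it is counted once in $\app{c}{X_a}$ and once in $\app{c}{X_b}$); half its capacity $\frac{1}{2}\app{c}{e}$ if exactly one of $a, b$ equals $0$ and the other lies in $\intset{k}$ (it is counted only once); and $0$ if $a = b$ or both endpoints lie in $X_0$. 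This $1, \tfrac{1}{2}, 0$ weighting is exactly the table of values of $\app{d}{\cdot, \cdot}$, which is why the construction succeeds.

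I would then apply this rule to the three classes of edges. For an original edge $\setprn{i, j} \in E$ of capacity $\lambda_{i,j}$, the endpoints lie in $X_{x_i}$ and $X_{x_j}$, so the rule returns $\lambda_{i,j}$, $\frac{1}{2}\lambda_{i,j}$, or $0$ according to whether $0 \neq x_i \neq x_j \neq 0$, exactly one of $x_i, x_j$ is $0$, or $x_i = x_j$; this is $\lambda_{i,j}\app{d}{x_i, x_j}$. For a terminal edge $\setprn{i, s_{\gamma_i}}$ of capacity $\mu_i$, present only when $\mu_i > 0$ and hence $\gamma_i \in \intset{k}$, the terminal lies in $X_{\gamma_i}$ and $i$ in $X_{x_i}$, so the rule returns $\mu_i \app{d}{\gamma_i, x_i}$; when $\mu_i = 0$ there is no such edge but that term vanishes anyway. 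For a fringe edge $\setprn{i, i^\alpha}$ of capacity $2\sigma_{i,\alpha}$, admissibility places $i^\alpha$ in $X_0$ when $i \in X_0 \cup X_\alpha$ and in $X_{x_i}$ otherwise, so the edge crosses from a nonzero part to $X_0$ exactly when $x_i = \alpha$ and lies inside a single part otherwise; the rule thus returns $\frac{1}{2}\cdot 2\sigma_{i,\alpha} = \sigma_{i,\alpha}$ when $x_i = \alpha$ and $0$ otherwise, i.e. $\sigma_{i,\alpha}\app{1_{=}}{\alpha, x_i}$. Summing the three classes and invoking the decomposition to rewrite $\mu_i \app{d}{\gamma_i, x_i} + \sum_{\alpha \in \intset{k}\setminus\setprn{\gamma_i}} \sigma_{i,\alpha}\app{1_{=}}{\alpha, x_i} = \app{\tilde{g}_i}{x_i} - \app{\tilde{g}_i}{\gamma_i}$ gives $\app{c}{\mathcal{X}} = \app{\tilde{g}}{x} - \sum_i \app{\tilde{g}_i}{\gamma_i}$, which is the claim.

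The individual verifications are harmless, so the main care goes into the bookkeeping around $X_0$. The step I expect to require the most attention is reconciling the fringe edges with the summation range $\intset{k}\setminus\setprn{\gamma_i}$: one must check that no fringe $i^{\gamma_i}$ is ever created, which follows from $\sigma_{i,\gamma_i} = -\mu_i \leq 0$, and that fringes with $\sigma_{i,\alpha} = 0$ may be omitted since their terms vanish. A secondary point worth stating explicitly is that the asymmetric $1, \tfrac{1}{2}, 0$ weighting arises precisely because $X_0$ is excluded from the capacity sum, and that this pattern agrees with the values of $d$; this is the structural fact that ties the combinatorial semi-multicut to the analytic form of $\tilde{g}$.
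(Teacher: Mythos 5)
Your proof is correct and follows essentially the same route as the paper's: both verify the identity edge class by edge class (original, terminal, fringe) by computing each edge's contribution to $\frac{1}{2}\sum_{\alpha}\app{c}{X_\alpha}$ and matching it against the decomposition of $\tilde{g}_i$. Your explicit $1,\tfrac{1}{2},0$ counting rule and the remarks on $\sigma_{i,\gamma_i}=-\mu_i$ and the $\mu_i=0$ case just make precise the bookkeeping the paper leaves implicit.
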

\begin{proof}
  Let $x \defeq \app{x}{\mathcal{X}}$.
  The capacity $2 \sigma_{i, \alpha}$ of a fringe edge $\setprn{i, i^\alpha}$ contributes to $\app{c}{\mathcal{X}}$ by $\sigma_{i, \alpha}$ if $i \in X_\alpha$ and by $0$ otherwise.
  Thus the contribution is equal to $\sigma_{i, \alpha} \app{1_{=}}{\alpha, x_i}$.
  The capacity $\mu_i$ of a terminal edge $\setprn{i, s_\alpha}$ contributes to $\app{c}{\mathcal{X}}$ by $0$ if $i \in X_\alpha$ and by $\mu_i / 2$ if $i \in X_0$, and by $\mu_i$ if $i \in X_\beta$ with $\beta \neq \alpha$.
  Thus the contribution is equal to $\mu_i \app{d}{x_i, \alpha}$.
  Similarly, we verify that the contribution of the capacity $\lambda_{i, j}$ of $\setprn{i, j} \in E$ is equal to $\lambda_{i, j} \app{d}{x_i, x_j}$.
  Thus the claimed equality holds.
\end{proof}

Third we show that a minimum admissible semi-multicut is easily obtained by $k$ max-flow computations, where ``minimum'' is with regard to the cut capacity. 
An \emph{admissible} $\alpha$-cut is an $\alpha$-cut $X$ such that for each $\beta \in \intset{k} \setminus \setprn{\alpha}$, each $\beta$-fringe $i^\beta$ belongs to $X$ if $i \in X$ and $\tilde{V} \setminus X$ otherwise.
Then an admissible semi-multicut $\paren{X_0, X_1, X_2, \ldots, X_k}$ is exactly a partition of $\tilde{V}$ such that $X_\alpha$ is an admissible $\alpha$-cut for each $\alpha \in \intset{k}$.
\begin{lemma} \label{lem:minimum_semimultiway_cut}
  \begin{enumerate}
    \item Any minimum $\alpha$-cut is admissible.
    \item For $\alpha \in \intset{k}$, let $Y_\alpha$ be the inclusion-minimal minimum $\alpha$-cut.
          Then $\paren{Y_0, Y_1, \ldots, Y_k}$ is a minimum admissible semi-multicut.
  \end{enumerate}
  In particular, a minimum admissible semi-multicut is exactly a partition $(X_0, X_1, \ldots, X_k)$ of $\tilde{V}$ such that $X_\alpha$ is a minimum $\alpha$-cut for each $\alpha \in \intset{k}$.
\end{lemma}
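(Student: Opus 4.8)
The plan is to reduce everything to two elementary facts about the \emph{undirected} cut function $\app{c}{\cdot}$ of $N$: that fringes are leaves, and that $\app{c}{\cdot}$ is posimodular. I would prove the two assertions in order and then deduce the ``in particular'' characterization.

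\textbf{Part (1).} First I would observe that each fringe $i^\beta$ is a leaf, joined to the rest of $N$ only through the single edge $\setprn{i, i^\beta}$, whose capacity $2\sigma_{i,\beta}$ is positive (the fringe exists precisely when $\sigma_{i,\beta} > 0$). Let $X$ be a minimum $\alpha$-cut and fix $\beta \in \intset{k} \setminus \setprn{\alpha}$. The $\beta$-fringe $i^\beta$ is unconstrained by the definition of $\alpha$-cut, so it is free to be placed on either side. If $i^\beta$ lies opposite to $i$, its edge is cut and $\app{c}{X}$ is strictly larger than if $i^\beta$ lay on $i$'s side, where nothing extra is cut. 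Hence every minimizer must satisfy $i^\beta \in X$ exactly when $i \in X$, which is precisely admissibility.

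\textbf{Part (2), the crux.} For any admissible semi-multicut $\mathcal{X} = \paren{X_0, \ldots, X_k}$ each $X_\alpha$ with $\alpha \in \intset{k}$ is an $\alpha$-cut, so $\app{c}{X_\alpha} \geq \kappa_\alpha$, where $\kappa_\alpha$ denotes the capacity of a minimum $\alpha$-cut; this gives the lower bound $\app{c}{\mathcal{X}} = \frac{1}{2}\sum_{\alpha} \app{c}{X_\alpha} \geq \frac{1}{2}\sum_{\alpha} \kappa_\alpha$. The main obstacle is to show that the inclusion-minimal minimum $\alpha$-cuts $Y_\alpha$ are pairwise disjoint, so that they assemble into an honest partition attaining this bound. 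Here I would use that $N$ is undirected, hence $\app{c}{\cdot}$ is posimodular: $\app{c}{X} + \app{c}{Y} \geq \app{c}{X \setminus Y} + \app{c}{Y \setminus X}$. A routine check shows that for an $\alpha$-cut $X$ and a $\beta$-cut $Y$ with $\alpha \neq \beta$, the set $X \setminus Y$ is again an $\alpha$-cut (it retains $s_\alpha$, drops every other terminal since $s_\alpha \notin Y$, and contains no $\alpha$-fringe), and symmetrically $Y \setminus X$ is a $\beta$-cut. Applying posimodularity to $X = Y_\alpha$ and $Y = Y_\beta$, together with $\app{c}{Y_\alpha \setminus Y_\beta} \geq \kappa_\alpha = \app{c}{Y_\alpha}$ and $\app{c}{Y_\beta \setminus Y_\alpha} \geq \kappa_\beta = \app{c}{Y_\beta}$, forces equality throughout. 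Thus $Y_\alpha \setminus Y_\beta$ is a minimum $\alpha$-cut contained in $Y_\alpha$, and inclusion-minimality yields $Y_\alpha \setminus Y_\beta = Y_\alpha$, i.e. $Y_\alpha \cap Y_\beta = \varnothing$.

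\textbf{Assembling and the characterization.} With disjointness in hand I would set $Y_0 \defeq \tilde{V} \setminus \bigcup_{\alpha \in \intset{k}} Y_\alpha$ (there is no ``$0$-cut'' constraint, so $Y_0$ is simply the leftover), making $\paren{Y_0, \ldots, Y_k}$ a partition whose parts $Y_\alpha$ are $\alpha$-cuts. To verify admissibility I would trace each $\alpha$-fringe $i^\alpha$: it lies outside $Y_\alpha$ by definition, and for every $\beta \neq \alpha$ the admissibility of $Y_\beta$ from Part (1) gives $i^\alpha \in Y_\beta$ exactly when $i \in Y_\beta$; combined with the partition property this places $i^\alpha$ in $Y_\beta$ when $i \in Y_\beta$ for some $\beta \neq \alpha$, and in $Y_0$ when $i \in Y_0 \cup Y_\alpha$, which is exactly the admissibility rule. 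Since $\app{c}{Y_\alpha} = \kappa_\alpha$, this admissible semi-multicut attains the lower bound and is therefore minimum, proving (2). Finally the ``in particular'' claim follows by squeezing: a minimum admissible semi-multicut satisfies $\frac{1}{2}\sum_\alpha \app{c}{X_\alpha} = \frac{1}{2}\sum_\alpha \kappa_\alpha$ with each $\app{c}{X_\alpha} \geq \kappa_\alpha$, forcing $\app{c}{X_\alpha} = \kappa_\alpha$ for every $\alpha$; conversely, any partition into minimum $\alpha$-cuts is admissible by the same fringe-tracing and attains the minimum.
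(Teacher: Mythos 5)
Your proposal is correct and follows essentially the same route as the paper: part (1) by the observation that fringes are leaves whose edges should not be cut, and part (2) by the standard uncrossing/posimodularity argument (which the paper delegates to its Lemma on intersecting properties of minimum isolating cuts, stating that $X \setminus Y$ remains an $\alpha$-mincut and hence the inclusion-minimal minimum isolating cuts are pairwise disjoint). You simply spell out the lower-bound-and-attainment bookkeeping that the paper leaves implicit.
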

\begin{proof}
  (1).
  Let $X$ be an $\alpha$-cut.
  For $\beta \in \intset{k} \setminus \setprn{\alpha}$, if the $\beta$-fringe $i^\beta$ of $i \in X$ is outside of $X$, then include $i^\beta$ into $X$ to decrease the cut capacity.
  Similarly, for $\beta \in \intset{k} \setminus \setprn{\alpha}$, if the $\beta$-fringe $i^\beta$ of $i \in V \setminus X$ belongs to $X$, then remove $i^\beta$ from $X$ to decrease the cut capacity.
  (2) is immediate from the standard uncrossing argument; see the proof of Lemma~\ref{lem:uncrossing}.
\end{proof}
In particular, a minimum admissible multicut is obtained by computing a minimal minimum $\alpha$-cut for each $\alpha \in \intset{k}$.
The network $N$ has at most $k + n + nk$ vertices and $m + n+ nk$ edges.
When computing a minimum $\alpha$-cut, all $\beta$-fringes with $\beta \neq \alpha$ can be removed, and a max-flow algorithm is performed on a network of $n+2$ vertices and $m + 2n$ edges
(after combining $s_\beta$ for $\beta \neq \alpha$ and all $\alpha$-fringes into a single vertex).
Thus we obtain the following, which was essentially shown in~\cite{Gridchyn2013, Kovtun2003}.
\begin{lemma}[\cite{Gridchyn2013, Kovtun2003}]
  A minimizer of $\tilde{g}$ can be obtained in $\Order{k \maxflow{n}{m}}$ time.
\end{lemma}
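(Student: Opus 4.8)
The plan is to assemble the bound directly from the two structural facts already in hand: the identity $\app{\tilde{g}}{\app{x}{\mathcal{X}}} = \app{c}{\mathcal{X}} + \sum_{i \in \intset{n}} \app{\tilde{g}_i}{\gamma_i}$ together with the bijection $\mathcal{X} \mapsto \app{x}{\mathcal{X}}$ between admissible semi-multicuts and ${S_k}^n$, and Lemma~\ref{lem:minimum_semimultiway_cut}. The first says that minimizing $\tilde{g}$ over ${S_k}^n$ is equivalent to finding a minimum admissible semi-multicut of $N$ (the additive constant $\sum_i \app{\tilde{g}_i}{\gamma_i}$ does not affect the argmin). The second says that such a semi-multicut $\paren{Y_0, Y_1, \ldots, Y_k}$ is obtained by computing, for each $\alpha \in \intset{k}$ \emph{separately}, the inclusion-minimal minimum $\alpha$-cut $Y_\alpha$, with $Y_0$ their complement in $\tilde{V}$. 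Hence the whole task reduces to $k$ independent minimum-$\alpha$-cut computations, plus $\Order{nk + m}$ bookkeeping to build $N$ (computing each $\gamma_i$, $\mu_i$, $\sigma_{i, \alpha}$) and to read off $\app{x}{\mathcal{X}}$ from the parts.

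The second step is to show that each minimum $\alpha$-cut is a single $\paren{s, t}$-min-cut on a network of $\Order{n}$ vertices and $\Order{m}$ edges, as sketched in the paragraph preceding the lemma. Fix $\alpha$. Every $\beta$-fringe with $\beta \neq \alpha$ is a leaf joined to its vertex $i$ by one edge and is unconstrained in the definition of an $\alpha$-cut, so placing it on the same side as $i$ never increases the cut capacity; these fringes may therefore be deleted without changing either the minimum $\alpha$-cut value or the inclusion-minimal minimizer. The vertices forbidden from the source side, namely $s_\beta$ for $\beta \neq \alpha$ together with all $\alpha$-fringes, may be merged into a single sink $t$, while $s_\alpha$ serves as the source. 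The resulting network keeps the $n$ vertices of $V$ plus $s_\alpha$ and $t$ (so $n + 2$ vertices) and the edges of $E$, the terminal edges, and the $\alpha$-fringe edges (so at most $m + 2n$ edges). A minimum $\paren{s_\alpha, t}$-cut is then exactly a minimum $\alpha$-cut of $N$, and its source-minimal representative $Y_\alpha$ is extracted in the standard way from the residual graph of a maximum flow.

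Finally I would do the arithmetic. Each of the $k$ computations costs $\maxflow{n+2}{m+2n}$. Because $\paren{V, E}$ is connected we have $m \geq n - 1$, so $m + 2n = \Order{m}$ and $n + 2 = \Order{n}$; combined with the monotonicity of $\maxflow{\cdot}{\cdot}$ and the standing assumption that $\maxflow{n}{m}$ is not less than $\Order{nm}$, this yields $\maxflow{n+2}{m+2n} = \Order{\maxflow{n}{m}}$. Summing over $\alpha \in \intset{k}$ gives the total $\Order{k \maxflow{n}{m}}$, which dominates the $\Order{nk + m}$ setup and read-off costs, as required.

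The only genuinely delicate point I expect is the reduction in the second step: verifying that deleting the $\beta$-fringes and collapsing the forbidden terminals preserves not merely the minimum $\alpha$-cut \emph{value} but also its inclusion-minimal minimizer $Y_\alpha$, since Lemma~\ref{lem:minimum_semimultiway_cut}~(2) assembles the semi-multicut precisely from these minimal cuts. Everything else is either an appeal to Lemma~\ref{lem:minimum_semimultiway_cut} or routine counting; and since admissibility of minimum $\alpha$-cuts (Lemma~\ref{lem:minimum_semimultiway_cut}~(1)) already pins down where the $\alpha$-fringes sit, the fringe-deletion argument should go through cleanly.
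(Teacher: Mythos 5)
Your proposal is correct and follows essentially the same route as the paper: reduce minimization of $\tilde{g}$ to finding a minimum admissible semi-multicut via the capacity identity, invoke Lemma~\ref{lem:minimum_semimultiway_cut}~(2) to assemble it from $k$ inclusion-minimal minimum $\alpha$-cuts, and compute each as a single min-cut on the reduced network with $n+2$ vertices and $m+2n$ edges obtained by deleting the $\beta$-fringes ($\beta \neq \alpha$) and merging the remaining forbidden vertices into a sink. The ``delicate point'' you flag is handled exactly as you anticipate, by the admissibility of minimum $\alpha$-cuts from Lemma~\ref{lem:minimum_semimultiway_cut}~(1).
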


Fourth we explain how to obtain the PIP representation from maximum $\alpha$-flows for $\alpha \in \intset{k}$, where by an \emph{$\alpha$-flow} we mean a flow from $s_\alpha$ to the union of 
$S \setminus \setprn{s_\alpha}$ and $\alpha$-fringes.
To construct the PIP, we use the following intersecting properties of minimum isolating cuts. 
Here a minimum $\alpha$-cut is simply called an {\em $\alpha$-mincut}.

\begin{lemma} \label{lem:uncrossing}
  \begin{enumerate}
    \item  For distinct $\alpha, \beta \in \intset{k}$, if $X$ is an $\alpha$-mincut and $Y$ is a $\beta$-mincut, then $X \setminus Y$ is an $\alpha$-mincut and $Y \setminus X$ is a $\beta$-mincut.
    \item For distinct $\alpha, \beta, \gamma \in \intset{k}$, if $X$ is an $\alpha$-mincut, $Y$ is a $\beta$-mincut and $Z$ is a $\gamma$-mincut, then $X \cap Y \cap Z = \varnothing$.  
  \end{enumerate}
\end{lemma}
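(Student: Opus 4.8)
The whole argument hinges on the fact that the cut-capacity map $X \mapsto c(X)$ of the \emph{undirected} network $N$ is submodular and symmetric, i.e. $c(X) = c(\tilde V \setminus X)$. Adding these two facts yields \emph{posimodularity}, $c(X) + c(Y) \geq c(X \setminus Y) + c(Y \setminus X)$ for all $X, Y \subseteq \tilde V$, which is the real workhorse. I will use posimodularity for part~(1) and submodularity for part~(2), together with the observation that the degree-one fringes are placed automatically: by Lemma~\ref{lem:minimum_semimultiway_cut}(1) every mincut is admissible, so the side of each fringe $i^\delta$ is determined by that of its core vertex $i$.

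\textbf{Part (1).} First I would verify that $X \setminus Y$ is again an $\alpha$-cut and $Y \setminus X$ a $\beta$-cut. Since $Y$ is a $\beta$-cut and $\alpha \neq \beta$, we have $s_\alpha \in X$ but $s_\alpha \notin Y$, hence $s_\alpha \in X \setminus Y$; as $X \setminus Y \subseteq X$, it still excludes every other terminal and every $\alpha$-fringe, so $X \setminus Y$ is an $\alpha$-cut, and symmetrically $Y \setminus X$ is a $\beta$-cut. Now posimodularity gives $c(X) + c(Y) \geq c(X \setminus Y) + c(Y \setminus X)$, while minimality of the $\alpha$- and $\beta$-mincuts gives $c(X \setminus Y) \geq c(X)$ and $c(Y \setminus X) \geq c(Y)$. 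The two chains force equality throughout, so $c(X \setminus Y) = c(X)$ and $c(Y \setminus X) = c(Y)$, i.e. both are mincuts. This is the clean part.

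\textbf{Part (2), reductions.} By admissibility, $X \cap Y \cap Z$ can contain a fringe only if it already contains the corresponding core vertex, and no terminal $s_\delta$ lies in all three cuts (each $s_\delta$ belongs to at most one of them); so it suffices to exclude a \emph{core} vertex $i \in V = \intset{n}$ from $X \cap Y \cap Z$. Two further reductions organize the work. Submodularity shows the $\alpha$-cuts of minimum capacity are closed under $\cap$ and $\cup$, hence form a distributive lattice with a unique inclusion-maximal member $X_{\max}^\alpha$ containing every $\alpha$-mincut; therefore it is enough to treat the maximal mincuts $X = X_{\max}^\alpha$, $Y = X_{\max}^\beta$, $Z = X_{\max}^\gamma$. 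Dually, combining part~(1) with minimality shows that the inclusion-\emph{minimal} mincuts of distinct terminals are disjoint, which is the companion fact that, together with part~(2), lets Lemma~\ref{lem:minimum_semimultiway_cut}(2) assemble the minimal mincuts into an admissible semi-multicut.

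\textbf{Part (2), the crux.} The remaining step — showing $X_{\max}^\alpha \cap X_{\max}^\beta \cap X_{\max}^\gamma = \varnothing$ — is where I expect the genuine difficulty to lie. The obstacle is that the objective $c(X) + c(Y) + c(Z)$ decouples across the three terminals, so any choice of mincuts is simultaneously optimal and a naive submodular/counting estimate cannot produce a strict gain; one can only extract bounds such as $c(X_{\max}^\beta \cap X_{\max}^\gamma) \leq c(Y) + c(Z) - c(X)$ (via submodularity plus the fact that $\tilde V \setminus (X_{\max}^\beta \cup X_{\max}^\gamma)$ is an $\alpha$-cut), which do not by themselves forbid a common vertex. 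The argument must instead exploit the \emph{isolating} structure, namely that each terminal sits on the sink side of the other two terminals' cuts. Concretely, I would assume a core $i \in X_{\max}^\alpha \cap X_{\max}^\beta \cap X_{\max}^\gamma$ and uncross repeatedly with part~(1) (for instance $X_{\max}^\alpha \setminus X_{\max}^\beta$ and $X_{\max}^\alpha \setminus X_{\max}^\gamma$ are $\alpha$-mincuts omitting $i$), aiming to place $i$ on the sink side of all three terminals in the residual graphs of maximum $\alpha$-, $\beta$-, $\gamma$-flows; equivalently, I would show the complements $\tilde V \setminus X_{\max}^\delta$, the minimal cuts isolating the remaining terminals, cover $\tilde V$, contradicting $i$ lying on every source side. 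Once the maximal case is settled, the general statement and its ``in particular'' consequence follow immediately from the containment of arbitrary mincuts in the maximal ones.
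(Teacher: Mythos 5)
Your part (1) is exactly the paper's argument: posimodularity of the undirected cut function (symmetry plus submodularity), the observation that a subset of an $\alpha$-cut containing $s_\alpha$ is again an $\alpha$-cut, and minimality forcing equality throughout. That part is correct and needs no change.

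Part (2) has a genuine gap, and the tool you dismissed is precisely the one that closes it. You assert that a ``naive submodular/counting estimate cannot produce a strict gain,'' but for symmetric (undirected) cut functions there is a standard three-set strengthening of posimodularity,
\[
  c(X) + c(Y) + c(Z) \;\geq\; c\paren{X \setminus \paren{Y \cup Z}} + c\paren{Y \setminus \paren{Z \cup X}} + c\paren{Z \setminus \paren{X \cup Y}} + c\paren{X \cap Y \cap Z},
\]
verifiable edge by edge. Since $X \setminus \paren{Y \cup Z}$ still contains $s_\alpha$ and is a subset of $X$, it is again an $\alpha$-cut, and likewise for the $\beta$- and $\gamma$-terms; minimality of $X, Y, Z$ then forces $c\paren{X \cap Y \cap Z} \leq 0$. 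But $X \cap Y \cap Z$ contains no terminal, so if it were nonempty it would be a nonempty proper subset of $\tilde{V}$ with strictly positive cut capacity, because $\paren{V, E}$ is connected, all edge capacities are positive, and each fringe is joined to its core vertex by a positive-capacity edge. This contradiction proves (2) in one step, with no need to pass to inclusion-maximal mincuts. Your proposed route, by contrast, never closes: the reduction to $X_{\max}^\alpha \cap X_{\max}^\beta \cap X_{\max}^\gamma$ is harmless but buys nothing, and your ``equivalent'' target --- that the complements $\tilde{V} \setminus X_{\max}^\delta$ cover $\tilde{V}$ --- is just De Morgan applied to the claim you are trying to prove, so the repeated-uncrossing plan ends where it began. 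As written, part (2) is a sketch with the decisive step missing rather than a proof.
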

\begin{proof}
  In the theory of minimum cuts on undirected networks, the following inequalities are well-known:
  \begin{align*}
    \app{c}{X} + \app{c}{Y} & \geq \app{c}{X \setminus Y} + \app{c}{Y \setminus X}, \\
    \app{c}{X} + \app{c}{Y} + \app{c}{Z} & \geq \app{c}{X \setminus \paren{Y \cup Z}} + \app{c}{Y \setminus \paren{Z \cup X}} + \app{c}{Z \setminus \paren{X \cup Y}} + \app{c}{X \cap Y \cap Z}
  \end{align*}
  for every $X, Y, Z \subseteq \tilde{V}$.
  Then (1) is an immediate consequence of the first inequality and the fact that any subset of an $\alpha$-cut containing $s_\alpha$ is again an $\alpha$-cut.
  (2) is also immediate from the second inequality and the condition that
  $\paren{V, E}$ is connected and each edge of $N$ has a positive capacity.
\end{proof}

By Lemma~\ref{lem:uncrossing}~(2), each vertex belongs to at most two minimum isolating cuts.
Let $\overrightarrow{N}$ denote the directed network obtained from $N$ by replacing each undirected edge $\setprn{u, v} \in \tilde E$ by two directed edge $\paren{u, v}$ and $\paren{v, u}$ of capacity $\app{c}{u, v} = \app{c}{v, u} \defeq \app{c}{\setprn{u, v}}$.
For each $\alpha \in \intset{k}$, consider the network obtained from $\overrightarrow{N}$
by removing all $\beta$-fringes with $\beta \neq \alpha$ and contracting terminals $s_\beta$ with $\beta \neq \alpha$ and $\alpha$-fringes into a single terminal $s'$, and consider
the residual graph $G_\alpha$ corresponding to a maximum $\paren{s_\alpha, s'}$-flow in this network.
Let $\Sigma_\alpha = \paren{\Sigma_\alpha, \leq_\alpha}$ be the poset obtained from $G_\alpha$ in the same way as defined in Theorem~\ref{thm:picard}.
Here each element in $\Sigma_\alpha$ is a subset of $V$ (not including terminals and fringes).
The ideal family of each $\Sigma_\alpha$ is isomorphic to 
the family of minimum $\alpha$-cuts.
The intersecting part in $\Sigma_\alpha$ and $\Sigma_\beta$ is described as follows.
\begin{lemma} \label{lem:intersection_of_isolating_cuts}
  Let $\alpha, \beta \in \intset{k}$ with $\alpha \neq \beta$.
  The following hold:
  \begin{enumerate}
    \item For every $A \in \Sigma_\alpha$ and $B \in \Sigma_\beta$, it holds either $A \cap B = \varnothing$ or $A = B$.
    \item For every $A, B \in \Sigma_\alpha \cap \Sigma_\beta$, if $A \leq_\alpha B$, then it holds $B \leq_\beta A$.
  \end{enumerate}
\end{lemma}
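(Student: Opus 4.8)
The plan is to translate both statements into the language of minimum isolating cuts via the Picard--Queyranne dictionary (Theorem~\ref{thm:picard}) applied to each $G_\alpha$, and then run the uncrossing inequality of Lemma~\ref{lem:uncrossing}~(1). I would first record two dictionary facts for each $\alpha$. \emph{Fact (a):} two vertices of the non-trivial region lie in a common element of $\Sigma_\alpha$ if and only if they lie on the same side of every minimum $\alpha$-cut; in particular each $A \in \Sigma_\alpha$ is either contained in or disjoint from any given $\alpha$-mincut, and $A$ is contained in the $\alpha$-mincut $\app{\tau_\alpha}{\downarrow A}$ attached to its principal ideal. \emph{Fact (b):} for $A, B \in \Sigma_\alpha$, the relation $A \leq_\alpha B$ holds if and only if every $\alpha$-mincut containing $B$ also contains $A$; this is just the statement that ideals of $\Sigma_\alpha$ are down-closed, read through $\tau_\alpha$.

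For (1), suppose $A \cap B \neq \varnothing$ and fix $v \in A \cap B$; I will show $A \subseteq B$, the reverse inclusion following by symmetry. Take any $u \in A$ and any minimum $\beta$-cut $Y'$; the goal is $u \in Y' \iff v \in Y'$. Fix a minimum $\alpha$-cut $X \supseteq A$ (e.g.\ $\app{\tau_\alpha}{\downarrow A}$), so $u, v \in X$. If $v \in Y'$, then by Lemma~\ref{lem:uncrossing}~(1) $X \setminus Y'$ is an $\alpha$-mincut avoiding $v$; since $u, v$ share the $\alpha$-scc $A$, fact (a) puts them on the same side of $X \setminus Y'$, so $u \notin X \setminus Y'$, and $u \in X$ forces $u \in Y'$. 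The reverse implication is the same argument with $u$ and $v$ exchanged. Hence $u$ and $v$ agree on every $\beta$-mincut; since $v$ lies in the non-trivial region of $\Sigma_\beta$, so does $u$, and fact (a) places $u$ in the same $\beta$-scc as $v$, i.e.\ $u \in B$.

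For (2), assume $A \leq_\alpha B$ and let $Y$ be any minimum $\beta$-cut with $A \subseteq Y$; I must show $B \subseteq Y$, as this is exactly $B \leq_\beta A$ by fact (b). Using fact (b), pick a minimum $\alpha$-cut $X$ with $A, B \subseteq X$ (take $\app{\tau_\alpha}{\downarrow B}$, which contains $B$ and, since $A \leq_\alpha B$, also $A$). Suppose for contradiction $B \not\subseteq Y$; as $B$ is a $\beta$-scc this means $B \cap Y = \varnothing$, so $B \subseteq X \setminus Y$. By Lemma~\ref{lem:uncrossing}~(1), $X \setminus Y$ is an $\alpha$-mincut, and it contains $B$; by $A \leq_\alpha B$ and fact (b) it must then contain $A$. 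But $A \subseteq Y$ gives $A \cap \paren{X \setminus Y} = \varnothing$, contradicting $A \neq \varnothing$. Therefore $B \subseteq Y$.

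The main obstacle, and the only place needing care, is setting up the dictionary facts (a) and (b) cleanly, so that scc-membership and the reachability order $\leq_\alpha$ turn into crisp statements about which side of each $\alpha$-mincut a vertex lies on; once that translation is in place, both parts are short applications of the single uncrossing inequality. Notably, the triple-intersection property Lemma~\ref{lem:uncrossing}~(2) is not needed for this lemma.
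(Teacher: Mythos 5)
Your proof is correct and follows essentially the same route as the paper's: both parts rest on the Picard--Queyranne dictionary between ideals of $\Sigma_\alpha$ and minimum $\alpha$-cuts together with the uncrossing statement of Lemma~\ref{lem:uncrossing}~(1), and your part (2) is just the direct form of the paper's contrapositive argument. Your part (1) is organized slightly differently (vertex-by-vertex agreement on all $\beta$-mincuts rather than the paper's case analysis on whether the minimal $\beta$-mincut over $B$ contains $A$), but the underlying mechanism --- that a minimum $\alpha$-cut cannot properly intersect an scc of $G_\alpha$ --- is identical.
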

\begin{proof}
  (1).
  Assume that $A \cap B \neq \varnothing$ and $A \neq B$.
  We can assume $A \setminus B \neq \varnothing$.
  Consider an $\alpha$-mincut $X$ with $A \subseteq X$ and consider a $\beta$-mincut $Y$ with $B \subseteq Y$.
  Take $Y$ minimal.
  Then every scc of $G_\beta$ included in $Y$ is less than or equal to $B$ with respect to $\leq_\beta$.
  Suppose that $Y$ contains $A$.
  There is a $\beta$-mincut $Z$ containing $A \setminus B$ and disjoint from $B$.
  By Lemma~\ref{lem:uncrossing}~(1), $X \setminus Z$ is an $\alpha$-mincut and properly intersects $A$.
  However this is impossible since the set of non-fringe vertices in each $\alpha$-mincut is a disjoint union of sccs of $G_\alpha$.
  Suppose that $Y$ does not contain $A$.
  Then $X \setminus Y$ is an $\alpha$-mincut and properly intersects $A$ again.
  This is a contradiction.
  
  (2).
  Assume that $A \leq_\alpha B$ and $B \not\leq_\beta A$.
  There is a $\beta$-mincut $Y$ containing $A$ and disjoint with $B$.
  Consider an $\alpha$-mincut $X$ containing $B$. 
  Then $X \setminus Y$ is an $\alpha$-mincut, contains $B$, and does not contain $A$.
  However this contradicts the assumption that $A \leq_\alpha B$.
\end{proof}

By Lemma~\ref{lem:intersection_of_isolating_cuts}, we obtain the elementary PIP representing minimum admissible cuts just by ``gluing'' $\Sigma_1, \Sigma_2, \ldots, \Sigma_k$ along the intersections.
Let $P \defeq \bigcup_{\alpha \in \intset{k}} \Sigma_\alpha \times \setprn{\alpha}$ and $\leq$ a partial order on $P$ defined by
\begin{align*}
  \text{$\paren{X, \alpha} \leq \paren{Y, \beta}$ if and only if $\alpha = \beta$ and $X \leq_\alpha Y$}
\end{align*}
for every $\paren{X, \alpha}, \paren{Y, \beta} \in P$.
In addition, let $\mininconsistent$ be a symmetric binary relation on $P$ defined by
\begin{align*}
  \text{$\paren{X, \alpha} \mininconsistent \paren{Y, \beta} $ if and only if $\alpha \neq \beta$ and $X = Y$}
\end{align*}
for every $\paren{X, \alpha}, \paren{Y, \beta} \in \Sigma$.

\begin{theorem} \label{thm:PIP_Potts}
  The triplet $P = \paren{P, \leq, \mininconsistent}$ 
  is an elementary PIP with minimal inconsistency relation $\mininconsistent$.
  The consistent ideal family $\consideals{P}$ and the family of minimum admissible semi-multicuts of $N$ are in one-to-one correspondence by the map
  \begin{align*}
    I \mapsto \paren{X_0^I, X^I_1, X^I_2, \ldots, X^I_k}, 
  \end{align*}
  where, for each $\alpha \in \intset{k}$, $X_\alpha^I$ is the admissible $\alpha$-cut containing all vertices $i \in V$ such that $i$ is reachable from $s_\alpha$ in $G_\alpha$ or belongs to $X$ for some $\paren{X, \alpha} \in I$.
\end{theorem}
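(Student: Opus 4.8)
The plan is to first confirm that $\paren{P, \leq, \mininconsistent}$ is a genuine PIP, then to promote the stated one-to-one correspondence to a poset isomorphism between $\consideals{P}$ and the family of minimum admissible semi-multicuts, and finally to read off elementariness from Corollary~\ref{cor:elementary_pip}. Using the alternative description of a PIP via $\mininconsistent$, I would verify (MIC1) and (MIC2) directly. (MIC1) is immediate: if $\paren{X,\alpha} \mininconsistent \paren{Y,\beta}$ then $\alpha \neq \beta$, whereas a common upper bound $\paren{Z,\gamma}$ would force $\gamma = \alpha$ and $\gamma = \beta$ at once. For (MIC2), comparabilities in $P$ never cross colours, so $\paren{X',\alpha'} \leq \paren{X,\alpha}$ and $\paren{Y',\beta'} \leq \paren{Y,\beta}$ give $\alpha' = \alpha$, $\beta' = \beta$, $X' \leq_\alpha X$, $Y' \leq_\beta Y$; together with $X' = Y'$ and $X = Y$ this puts $X, X'$ in $\Sigma_\alpha \cap \Sigma_\beta$ with $X' \leq_\alpha X$, and Lemma~\ref{lem:intersection_of_isolating_cuts}~(2) then gives $X \leq_\beta X'$, whence antisymmetry of $\leq_\beta$ forces $X = X'$.

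The core is the correspondence. For a consistent ideal $I$, its colour slice $I_\alpha \defeq \setprnsep{X}{\paren{X,\alpha} \in I}$ is an ideal of $\Sigma_\alpha$, so Theorem~\ref{thm:picard} applied to $G_\alpha$ names a minimum $\alpha$-cut $X^I_\alpha$ with $X^I_\alpha \cap V = R_\alpha \cup \bigcup_{X \in I_\alpha} X$, where $R_\alpha$ is the set of vertices reachable from $s_\alpha$ in $G_\alpha$; note $R_\alpha = Y_\alpha \cap V$ for the inclusion-minimal $\alpha$-mincut $Y_\alpha$ of Lemma~\ref{lem:minimum_semimultiway_cut}. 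The whole argument reduces to a single structural dichotomy for $\alpha \neq \beta$: a vertex of $V$ can lie in both a minimum $\alpha$-cut and a minimum $\beta$-cut only through a shared flexible scc $A \in \Sigma_\alpha \cap \Sigma_\beta$. I would establish this from three disjointness facts: $R_\alpha \cap R_\beta = \varnothing$ (Lemma~\ref{lem:minimum_semimultiway_cut}~(2)); flexible sccs $A \in \Sigma_\alpha$ and $B \in \Sigma_\beta$ meet only when $A = B$ (Lemma~\ref{lem:intersection_of_isolating_cuts}~(1)); and $R_\alpha$ is disjoint from every flexible $\beta$-scc.

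The last fact is where the work lies, and I expect it to be the main obstacle, since it is the only point that must exclude the coexistence of a colour-forced vertex with another colour's flexible part. I would argue it by uncrossing: if some $i \in R_\alpha$ also lay in a flexible $\beta$-scc $A$, then $i \in Y_\alpha$, and since $A$ is flexible $i$ lies in some $\beta$-mincut $W \supseteq A$; Lemma~\ref{lem:uncrossing}~(1) then makes $Y_\alpha \setminus W$ an $\alpha$-mincut omitting $i$, contradicting the inclusion-minimality of $Y_\alpha$. This combination of the uncrossing inequality with minimality is the crux.

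Granting the dichotomy, $X^I_\alpha \cap X^I_\beta \cap V = \bigcup \setprnsep{A}{\paren{A,\alpha}, \paren{A,\beta} \in I}$, which is empty precisely because the consistent ideal $I$ contains no minimally inconsistent pair $\paren{A,\alpha} \mininconsistent \paren{A,\beta}$; hence $\paren{X^I_0, X^I_1, \ldots, X^I_k}$ is a minimum admissible semi-multicut by Lemma~\ref{lem:minimum_semimultiway_cut}. The inverse map sends a minimum admissible semi-multicut, whose parts are minimum $\alpha$-cuts (again Lemma~\ref{lem:minimum_semimultiway_cut}), to $I \defeq \bigcup_\alpha I_\alpha \times \setprn{\alpha}$, and the same identity shows $I$ is consistent if and only if its parts partition $V$. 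Since each $X^I_\alpha$ depends monotonically on $I_\alpha$ (Theorem~\ref{thm:picard}), this bijection is an order isomorphism onto the semi-multicut family ordered componentwise, which under $\mathcal{X} \mapsto \app{x}{\mathcal{X}}$ matches $\preceq$ and so identifies $\consideals{P}$ with $\minimizers{\tilde{g}}$. As $\minimizers{\tilde{g}}$ is $\paren{\sqmeet, \sqjoin}$-closed, Corollary~\ref{cor:elementary_pip} finally yields that $P$ is elementary.
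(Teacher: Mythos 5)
Your proof is correct and follows essentially the same route as the paper: the PIP axioms via Lemma~\ref{lem:intersection_of_isolating_cuts}~(2), the bijection via the Picard--Queyranne ideals of each $\Sigma_\alpha$ together with the equivalence between consistency of $I$ and pairwise disjointness of the cuts $X^I_\alpha$, and elementariness via Corollary~\ref{cor:elementary_pip}. The only difference is that where the paper simply asserts that consistency forces $X^I_\alpha \cap X^I_\beta = \varnothing$, you supply the supporting detail (in particular the uncrossing argument showing the forced part $R_\alpha$ cannot meet a flexible $\beta$-scc, via minimality of $Y_\alpha$ and Lemma~\ref{lem:uncrossing}~(1)), and that argument is sound.
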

\begin{proof}
  It is easy to see from Lemma~\ref{lem:intersection_of_isolating_cuts}~(2) that $P$ is a PIP with minimal inconsistency relation $\mininconsistent$.
  
  We next show that $\consideals{P}$ represents the family of minimum semi-multiway cuts. 
  Let $I \in \consideals{P}$.
  Then $X_\alpha^I$ is a minimum $s_\alpha$-isolating cut (by Theorem~\ref{thm:picard}).
  By consistency, it necessarily holds $X_\alpha^I \cap X_\beta^I = \emptyset$ 
  for $\alpha \neq \beta$.
  Thus $\paren{X^I_0, X^I_1, X^I_2, \ldots, X^I_k}$ is a minimum semi-multiway cut.
  
  Conversely, let $\paren{X_0, X_1, X_2, \ldots, X_k}$ be a minimum semi-multiway cut.
  Each $X_\alpha$ is a minimum $s_\alpha$-isolating cut, and is represented by an ideal $I_\alpha$ of $\Sigma_\alpha$.
  Now $I \defeq \bigcup_{\alpha \in \intset{k}} I_\alpha \times \setprn{\alpha}$ is a consistent ideal of $P$ since $X_1, X_2, \ldots, X_k$ are pairwise disjoint.
  Then it holds $X_\alpha = X_\alpha^I$ for $\alpha \in \intset{k}$.
  
  Now $P$ represents a $\paren{\sqcap, \sqcup}$-closed set in ${S_k}^n$, and is necessarily elementary by Corollary~\ref{cor:elementary_pip}.
\end{proof}

Therefore the PIP-representation is obtained by computing a maximum $\alpha$-flow for each $\alpha \in \intset{k}$.

\begin{lemma}
  The PIP-representation for the minimizer set of $\tilde{g}$ is obtained in $\mathrm{O}(k \maxflow{n}{m})$ time.
\end{lemma}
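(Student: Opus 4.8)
The plan is to treat this lemma as a running-time analysis layered on top of Theorem~\ref{thm:PIP_Potts}, which already establishes correctness: the minimizer set of $\tilde{g}$ is isomorphic to $\consideals{P}$, so it suffices to bound the time needed to produce the elementary PIP $P = \paren{P, \leq, \mininconsistent}$. The construction splits into three phases---computing the maximum $\alpha$-flows, extracting each poset $\Sigma_\alpha$ from its residual graph $G_\alpha$, and gluing the $\Sigma_\alpha$ together---and I would bound each in turn.

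First I would handle the flow phase. As established in the preceding lemma, for each $\alpha \in \intset{k}$ a minimum $\alpha$-cut is obtained by a single max-flow computation on the contracted network of $n+2$ vertices and $m + 2n$ edges (after merging the $s_\beta$ with $\beta \neq \alpha$ and the $\alpha$-fringes into one terminal). Hence all $k$ maximum $\alpha$-flows, and thus all residual graphs $G_\alpha$, are computed in $\Order{k \maxflow{n}{m}}$ time in total. Second, for each $\alpha$ I would extract $\Sigma_\alpha$ with its order $\leq_\alpha$ from $G_\alpha$ exactly as in Theorem~\ref{thm:picard}: a strongly-connected-components decomposition followed by reachability pruning (discarding sccs reachable from $s_\alpha$ or reaching the contracted sink). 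This runs in time linear in the size $\Order{n + m}$ of $G_\alpha$, hence $\Order{k \paren{n + m}}$ over all colors.

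The main obstacle will be bounding the gluing phase, where I must compute the minimal inconsistency relation, i.e.\ detect all pairs $\paren{X, \alpha} \mininconsistent \paren{Y, \beta}$ with $\alpha \neq \beta$ and $X = Y$. The two crucial facts are Lemma~\ref{lem:intersection_of_isolating_cuts}~(1), which forces two sccs of distinct colors to be either disjoint or identical, and Lemma~\ref{lem:uncrossing}~(2), which implies that each vertex $i \in V$ lies in isolating mincuts of at most two colors, hence in sccs of at most two of the posets $\Sigma_\alpha$. Consequently the total number of vertex--scc incidences across all colors is $\Order{n}$, and the gluing reduces to simple bookkeeping: for each vertex $i$ I record the (at most two) sccs containing it; whenever $i$ is recorded in some $X \in \Sigma_\alpha$ and some $Y \in \Sigma_\beta$ with $\alpha \neq \beta$, Lemma~\ref{lem:intersection_of_isolating_cuts}~(1) forces $X = Y$ and yields precisely one $\mininconsistent$-relation. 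Each incidence is touched a constant number of times, so this phase costs $\Order{n}$ (at worst $\Order{kn}$ if one also scans empty colors).

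Finally I would combine the bounds. Under the standing assumption $\maxflow{n}{m} \geq \Order{nm}$, the quantity $k \maxflow{n}{m}$ dominates both $\Order{k \paren{n + m}}$ from the scc extraction and the $\Order{kn}$ gluing cost. Therefore the overall running time is $\Order{k \maxflow{n}{m}}$, as claimed. I expect the only genuinely non-routine point to be the incidence bound: without Lemma~\ref{lem:uncrossing}~(2) the naive gluing could cost $\Order{kn}$ per color and inflate the bound, so the argument really hinges on invoking that each vertex meets mincuts of at most two colors.
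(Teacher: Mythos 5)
Your proposal is correct and follows essentially the same route as the paper, which derives this lemma directly from the preceding construction: compute a maximum $\alpha$-flow for each $\alpha \in \intset{k}$ on the contracted network of $n+2$ vertices and $m+2n$ edges, extract each $\Sigma_\alpha$ via Picard--Queyranne, and glue along the shared sccs, with the $k$ max-flow computations dominating. The paper states this in one line without spelling out the gluing cost; your explicit use of Lemma~\ref{lem:uncrossing}~(2) to bound the vertex--scc incidences by $\Order{n}$ is a correct and welcome elaboration of what the paper leaves implicit.
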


Finally we present an improved algorithm of time complexity $\Order{\log k \cdot \maxflow{n}{m}}$.
The key is the existence of a single ``multiflow'' that includes all maximum $\alpha$-flows.
Let $\mathcal{Q}$ denote the set of $\alpha$-paths over all $\alpha \in \intset{k}$, where an \emph{$\alpha$-path} is a path connecting $s_\alpha$ and an $\alpha$-fringe or a terminal $s_\beta$ with $\beta \neq \alpha$.
A \emph{multiflow} is a nonnegative-valued function $f$ on $\mathcal{Q}$ satisfying the capacity constraint:
\begin{align*}
  \app{f}{e} \defeq \sum \setprnsep{\app{f}{Q}}{Q \in \mathcal{Q}: \text{$Q$ contains $e$}} \leq \app{c}{e} \quad \paren{e \in \tilde{E}}.
\end{align*}
Let $\absprn{f}$ denote the total-flow value of $f$:
\begin{align*}
  \absprn{f} \defeq \sum \setprnsep{\app{f}{Q}}{Q \in \mathcal{Q}}.
\end{align*}
For $\alpha \in \intset{k}$, let $f_\alpha$ be the submultiflow of $f$ defined by $\app{f_\alpha}{Q} \defeq \app{f}{Q}$ if $Q$ is an $\alpha$-path and $\app{f_\alpha}{Q} \defeq 0$ otherwise.
Although the set $\mathcal{Q}$ is exponential, we can efficiently handle multiflow $f$ by keeping $f$ as $k$ flows of node-arc form in $\overrightarrow{N}$, as in \cite[p. 65--66]{Ibaraki1998}.
The following is a special case of~\cite[Theorem 1.2]{Hirai2010} (a version of \emph{multiflow locking theorem}).
\begin{lemma}[\cite{Hirai2010}]
  There exists a multiflow $f$ such that $\absprn{f_\alpha}$ is equal to the minimum capacity of an $\alpha$-cut for each $\alpha \in \intset{k}$. 
\end{lemma}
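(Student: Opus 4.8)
The plan is to deduce the lemma from the multiflow locking theorem \cite[Theorem~1.2]{Hirai2010} by displaying the network $\overrightarrow{N}$, together with its terminals and its admissible commodity pairs, as a special instance of the hypotheses of that theorem. As a warm-up I would dispose of the easy (weak-duality) direction, which bounds $\absprn{f_\alpha}$ from above for \emph{every} multiflow $f$. Fix $\alpha \in \intset{k}$ and an $\alpha$-cut $X$. Each $\alpha$-path joins $s_\alpha \in X$ to a vertex lying outside $X$---either a terminal $s_\beta$ with $\beta \neq \alpha$ or an $\alpha$-fringe, both of which are excluded from $X$ by the definition of an $\alpha$-cut---so every $\alpha$-path traverses at least one edge between $X$ and $\tilde{V} \setminus X$. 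Writing $\delta X$ for this set of edges and summing over all $\alpha$-paths, an interchange of summation gives

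\begin{align*}
  \absprn{f_\alpha} \le \sum_{e \in \delta X} \app{f}{e} \le \sum_{e \in \delta X} \app{c}{e} = \app{c}{X}.
\end{align*}

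Hence $\absprn{f_\alpha}$ never exceeds the minimum capacity of an $\alpha$-cut, and it remains to exhibit a single $f$ attaining all $k$ of these bounds simultaneously.

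For the reverse direction I would assemble the data required by \cite[Theorem~1.2]{Hirai2010}. Take the terminal set to be $S = \setprn{s_1, s_2, \ldots, s_k}$, let the commodity weight be supported exactly on the source--sink pairs defining $\mathcal{Q}$ (so that the sink side of commodity $\alpha$ consists of $S \setminus \setprn{s_\alpha}$ together with the $\alpha$-fringes), and take the family to be locked to be the singletons $\setprn{\setprn{s_\alpha} : \alpha \in \intset{k}}$. These singletons are pairwise disjoint, hence trivially laminar, which is exactly the structural hypothesis the locking theorem requires. The theorem then produces a multiflow $f$ locking each $\setprn{s_\alpha}$: the total value of flow incident to $s_\alpha$, which is precisely $\absprn{f_\alpha}$, equals the minimum capacity of a cut isolating $s_\alpha$ from its sink side. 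Identifying this isolating cut with an $\alpha$-cut and combining with the weak-duality bound above then forces $\absprn{f_\alpha}$ to equal the minimum $\alpha$-cut for every $\alpha$.

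The step I expect to demand the most care is the fringe bookkeeping needed to identify the abstract isolating cut supplied by \cite{Hirai2010} with an $\alpha$-cut in the present sense. The delicate point is that an $\alpha$-cut must exclude every $\alpha$-fringe but is free to place each $\beta$-fringe with $\beta \neq \alpha$ on either side. I would resolve this exactly as in the reduction already used to compute a single minimum $\alpha$-cut: since each $\beta$-fringe is a leaf joined to its base vertex by a single edge, it can be assigned to the side of that base vertex at no cost and hence contributes nothing to the cut isolating $s_\alpha$. Once this dictionary between the terminal-set cuts of the locking theorem and the concrete $\alpha$-cuts is in place, verifying the remaining hypotheses of Theorem~1.2---laminarity of the locked family and compatibility of the weight function---is routine, and the lemma follows.
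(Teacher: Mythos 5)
Your proposal takes the same route as the paper, which in fact offers no proof at all here: the lemma is stated as a bare special case of the locking theorem of \cite{Hirai2010}, and you likewise reduce everything to that theorem after recording the easy and correct weak-duality bound $\absprn{f_\alpha} \leq \app{c}{X}$ for every $\alpha$-cut $X$. The one place where your write-up needs repair is the closing ``fringe bookkeeping'' step. The nonstandard feature of this instance is not the locked family --- the singletons $\setprn{s_1}, \ldots, \setprn{s_k}$ are trivially laminar --- but the restricted commodity structure: commodity $\alpha$ may terminate at $\alpha$-fringes but not at $\beta$-fringes. If one instead promotes every fringe to an ordinary terminal of the \emph{free} multiflow problem and locks $\setprn{s_\alpha}$ there, both sides of the desired identity come out wrong: $f_{\setprn{s_\alpha}}$ then counts paths from $s_\alpha$ to $\beta$-fringes, which are not $\alpha$-paths, and the minimum cut isolating $s_\alpha$ from \emph{all} remaining terminals must pay $2\sigma_{i,\beta}$ to separate each $\beta$-fringe $i^\beta$ from a base vertex $i$ lying inside the cut, so it can strictly exceed the minimum $\alpha$-cut capacity. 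Your claim that a $\beta$-fringe ``can be assigned to the side of its base vertex at no cost'' is valid only under the restricted-commodity reading you adopt earlier, in which the isolating cut for commodity $\alpha$ is by definition free on $\beta$-fringes and hence already \emph{is} an $\alpha$-cut, so that no identification step remains; under the free-multiflow reading that would make the step necessary, the claim is false. In short, the entire burden of the proof is the assertion that the locking theorem of \cite{Hirai2010} (or the extension \cite{Babenko2012} that the paper invokes algorithmically) applies verbatim to this commodity structure; the paper asserts this without verification, and your argument does not verify it either --- it relocates the difficulty into the phrase ``compatibility of the weight function,'' which is exactly the step that is not routine.
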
  
Thus the submultiflow $f_\alpha$ of $f$ turns into a maximum $\alpha$-flow in $\overrightarrow{N}$.
We call such a multiflow \emph{locking}.
Our goal is to show that a locking multiflow $f$ is obtained in $\Order{\log k \cdot \maxflow{n}{m}}$ time.
This, consequently, yields $\Order{\log k \cdot \maxflow{n}{m}}$-time algorithm 
to obtain posets $\Sigma_1, \Sigma_2, \ldots, \Sigma_k$ and the desired PIP $P = \bigcup_{\alpha \in \intset{k}} \Sigma_\alpha \times \setprn{\alpha}$.

In the case where there are no fringes, the problem of finding a locking multiflow is nothing but the \emph{maximum free multiflow problem}, which is a well-studied problem in multiflow theory.
Ibaraki--Karzanov--Nagamochi~\cite{Ibaraki1998} developed an $\Order{\log k \cdot \maxflow{n}{m}}$-time algorithm (\emph{IKN-algorithm}) to obtain a locking multiflow.
Babenko--Karzanov~\cite{Babenko2012} extended the IKN-algorithm to a more general case, and can be applied to our case. 
For completeness, we present a direct adaptation of IKN-algorithm to the case where fringes exist,  though our algorithm may be regarded as a specialization of~\cite{Babenko2012}. 
The pseudo code is shown in Algorithm~\ref{alg:locking}.

\begin{algorithm}[tp]
  \caption{Compute a locking multiflow}
  \label{alg:locking}
  \begin{algorithmic}[1]
    \Input  A network $N$ with terminal set $S$
    \Output A locking multiflow in $N$
    \Function{Locking}{$N$}
      \If{$\absprn{S} \geq 4$}
        \State{Divide $S$ into $S'$ and $S''$ with $\absprn{S'} = \floor{\absprn{S}/2}$ and $\absprn{S''} = \ceil{\absprn{S}/2}$}
        \State{Compute a minimum cut $X$ with $S' \subseteq X$ and $X \cap S'' = \varnothing$}
        \State{Construct two networks $N'$ and $N''$}
        \State{$f' \defeq \Call{Locking}{N'}$ and $f'' \defeq \Call{Locking}{N''}$}
        \State{Aggregate $f'$ and $f''$ into a locking multiflow $f$ in $N$}
      \Else
        \If{there is an $\alpha$-fringe}
          \State{Compute a minimum $\alpha$-cut $X$}
          \State{Construct two network $N'$ and $N''$}
          \State{$f' \defeq \text{a maximum $\paren{s_\alpha, s'}$-flow}$ and $f'' \defeq \Call{Locking}{N''}$}
          \State{Aggregate $f'$ and $f''$ to obtain a locking multiflow $f$ in $N$}
        \Else
          \State{Compute a locking multiflow $f$ by IKN-algorithm}
        \EndIf
      \EndIf
      \State{\Return $f$}
    \EndFunction
  \end{algorithmic}
\end{algorithm}

Let us explain the detail of the algorithm.
Consider the case $\absprn{S} \geq 4$. 
As in IKN-algorithm, our algorithm divides terminal set $S$ into two sets $S'$ and $S''$ such that $\absprn{S'} = \floor{\absprn{S}/2}$ and $\absprn{S''} = \ceil{\absprn{S}/2}$, and find a minimum cut $X$ with $S' \subseteq X$ and $S'' \cap X = \varnothing$.
Here fringes may be removed in computation since $i \in X$ implies that all fringes of $i$ belong to $X$.
Two networks $N'$ and $N''$ are constructed as follows.
The network $N'$ is obtained from $N$ by contracting $\tilde{V} \setminus X$ into a single terminal $s'$ and by removing all $\alpha$-fringes for $\alpha \in S''$.  
Similarly, the network $N''$ is obtained from $N$ by contracting $X$ into a single terminal $s''$ and by removing all $\alpha$-fringes for $\alpha \in S'$. 

Suppose that we have a locking multiflow $f'$ in $N'$ and locking multiflow $f''$ in $N''$.
Then a locking multiflow $f$ in $N$ is obtained by ``aggregating'' $f'$ and $f''$ as follows.
An $\alpha$-path $Q$ in $N'$ not connecting $s'$ is regarded as an $\alpha$-path in $N$.  
Set $\app{f}{Q} \defeq \app{f'}{Q}$ for such a path $Q$.
Similarly, set $\app{f}{Q} \defeq \app{f''}{Q}$ for each $\alpha$-path $Q$ in $N''$ not connecting $s''$.
Next consider paths connecting $s'$ in $N'$ and $s''$ in $N''$. 
Observe that $\setprn{s'}$ is a minimum $s'$-cut in $N'$ and $\setprn{s''}$ is a minimum $s''$-cut in $N''$.
An edge $e$ in $N$ joining $X$ and $\tilde{V} \setminus X$ becomes an edge connecting $s'$ in $N'$ and an edge connecting $s''$ in $N''$.
Then $\app{f'}{e} = \app{f''}{e} = \app{c}{e}$ necessarily holds.
Consider an $s'$-path $Q'$ in $N'$ and $s''$-path $Q''$ in $N''$ containing $e$. 
The two paths $Q'$ and $Q''$ are concatenated along $e$ into an $\paren{s_\beta, s_\gamma}$-path $Q$ in $N$ for $s_\beta \in S', s_\gamma \in S''$, and set $\app{f}{Q} \defeq \min \setprn{ \app{f'}{Q'}, \app{f''}{Q''}}$. 
Decrease $f'$ by $\app{f}{Q}$ on $Q'$ (no $s''$-flows in $N''$), and decrease $f''$ by $\app{f}{Q}$ on $Q''$.
Repeating this process until there are no $s'$-flows in $N'$, we obtain a multiflow $f$ in $N$.
Here $f$ is a locking in $N$. 
This follows from the fact (obtained from uncrossing) that for $\alpha \in \intset{k}$ with $s_\alpha \in S'$ (resp. $S''$), a minimum $\alpha$-cut in $N'$ (resp. $N''$) is a minimum $\alpha$-cut in $N$.
Multiflows are kept as node-arc forms.
This procedure, called the \emph{aggregation}, 
can be done in $\Order{nm}$ time as in~\cite[Section 2.2]{Ibaraki1998}.

Suppose that $\absprn{S} \leq 3$.
Suppose that there is an $\alpha$-fringe.
Compute a minimum $\alpha$-cut $X$.
Construct $N'$ and $N''$ as above, find locking multiflows $f'$ in $N'$ and $f''$ in $N''$, and aggregate $f'$ and $f''$ into a locking multiflow $f$ in $N$.
In $N'$, there are two terminals, and a locking multiflow is obtained by a maximum flow.
In $N''$, there are (at most) three terminals but no $\alpha$-fringes.
After recursing at most three times, we arrive at the situation that there are no fringes.
This situation is precisely the same as \cite[Section 2.1]{Ibaraki1998}.
Then a locking multiflow is obtained in at most three max-flow computations.

The time complexity of this algorithm is analyzed in precisely the same way as \cite[Section 2.3]{Ibaraki1998}, sketched as follows.
For simplicity of analysis, we use Orlin's $\Order{nm}$-time algorithm~\cite{Orlin2013} to find a maximum flow and minimum cut.
Let $\app{T}{k, n, m}$ denote the time complexity of the algorithm applied to Potts $k$-submodular functions on graph $\paren{V, E}$ with $\absprn{V} = n$, and $\absprn{E} = m$.
Suppose that the time complexity of the max-flow algorithm and the aggregation procedure are bounded by $D nm$ and by $D'nm$ for constants $D$ and $D'$, respectively. 
We show by induction that $\app{T}{k, n, m} \leq C nm \log k$ for a constant $C$ (to be determined later).
For $k \leq 3$, it holds $\app{T}{3, n, m} \leq \paren{4D + 3D'} nm$. 
Suppose that $k \geq 4$.
Then $\app{T}{k, n, m} \leq \app{T}{k/2, n', m'} + \app{T}{k/2, n'', m''} + D nm + D' nm$ with $n' + n'' = n + 2$. 
By induction, we have
\begin{align*}
  \app{T}{k, n, m} & \leq C n'm' \log k/2 +  C n''m'' \log k/2 + \paren{D+D'} nm \\
                   & \leq C nm \log k - C nm \paren{1 - 2 \paren{\log k/2}/n} + \paren{D+D'}nm \\
                   & \leq C nm \log k - C nm/2 + \paren{D+D'} nm,
\end{align*}
where we use $k \leq n$ and $2 \paren{\log n/2}/n \leq 1/2$.
For $C \geq 4 \paren{D+D'}$, it holds $\app{T}{k, n, m} \leq C nm \log k$ as required.
This completes the proof of Theorem~\ref{thm:Potts}.

\subsection{Enumeration aspect}
\label{sec:enumeration}
The compact representation for $\paren{\sqmeet, \sqjoin}$-closed sets by an elementary PIP is kind of a data compression.
Hence it is natural to consider an efficient way to extract elements of the original $\paren{\sqmeet, \sqjoin}$-closed set.
This corresponds to the enumeration of consistent ideals of an elementary PIP.
As seen in Remark~\ref{lem:pip_and_cnf}, consistent ideals correspond to true assignments of a Boolean 2-CNF.
Thus we can enumerate all consistent ideals in output-polynomial time~\cite{Feder1994} (i.e., the algorithm stops in time polynomial in the length of the input and output).

Maximal consistent ideals are of special interest, as described in Section~\ref{sec:relaxation}. 
For a PIP $P$, let $\maxconsideals{P}$ denote the family of maximal consistent ideals.
Now we consider the enumeration of $\maxconsideals{P}$.
This can also be done in output-polynomial time by using the algorithm of~\cite{Kavvadias2000} in $\Order{k^3 n^3}$ time per output.
We here develop a considerably faster algorithm for the elementary PIP of a Potts $k$-submodular function $\tilde{g}$.
Our algorithm utilizes its amalgamated structure by posets (Theorem~\ref{thm:PIP_Potts}).
In fact, the structure of $\maxconsideals{P}$ is quite simple, which we now explain.
Let $\Sigma_1, \Sigma_2, \ldots, \Sigma_k$ be the posets, and $P \defeq \bigcup_{\alpha \in \intset{k}} \Sigma_\alpha \times \setprn{\alpha}$ the PIP defined in the previous section.
For distinct $\alpha, \beta \in \intset{k}$, let $\Sigma_{\alpha, \beta} \defeq \Sigma_\alpha \cap \Sigma_\beta$ be the subposet of $\Sigma_\alpha$.
In particular, $\Sigma_{\alpha, \beta}$ is equal to $\Sigma_{\beta, \alpha}$ as a set, and the partial order of $\Sigma_{\alpha, \beta}$ is the reverse of that of $\Sigma_{\beta, \alpha}$ by Lemma~\ref{lem:intersection_of_isolating_cuts}. 
Let $\Sigma_{\alpha, 0} \defeq \Sigma_\alpha \setminus \bigcup_{\beta \in \intset{k} \setminus \setprn{\alpha}} \Sigma_{\alpha, \beta}$. 
Now $\Sigma_1 \cup \Sigma_2 \cup \cdots \cup \Sigma_k$ is the disjoint union of $\Sigma_{\alpha, \beta}$ and $\Sigma_{\alpha', 0}$ for $\alpha, \alpha', \beta \in \intset{k}$  with $\alpha < \beta$ (recall Lemma~\ref{lem:uncrossing} that the intersection of three distinct $\Sigma_\alpha, \Sigma_\beta, \Sigma_\gamma$ is empty).
Define the poset $R$ by
\begin{align*}
  R \defeq \bigcup_{1 \leq \alpha < \beta \leq k} \Sigma_{\alpha, \beta},
\end{align*}
where partial order $\leq$ on $R$ is defined as: the relation on $\Sigma_{\alpha, \beta}$ is the same as the partial order of $\Sigma_{\alpha, \beta}$ and there is no relation between $\Sigma_{\alpha, \beta}$ and $\Sigma_{\alpha', \beta'}$ for $\paren{\alpha, \beta} \neq \paren{\alpha', \beta'}$.
For an ideal $J$ of $R$, let $\overline{J}$ be defined by
\begin{align*}
  \overline{J} \defeq
  \paren{
    \bigcup_{1 \leq \alpha \leq k} \Sigma_{\alpha, 0} \times \setprn{\alpha}
  }
  \cup
  \paren{
    \bigcup_{1 \leq \alpha < \beta \leq k}
    \paren{J \cap \Sigma_{\alpha, \beta}} \times \setprn{\alpha}
    \cup
    \paren{\Sigma_{\beta, \alpha} \setminus J} \times \setprn{\beta}
  }.
\end{align*}

\begin{theorem} \label{thm:enumeration_potts}
  The map $J \mapsto \overline{J}$ is a bijection from the ideal family of $R$ to $\maxconsideals{P}$.
\end{theorem}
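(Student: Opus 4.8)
The plan is to establish three things about the map $J \mapsto \overline{J}$: that it lands in $\consideals{P}$, that its image lies in the maximal consistent ideals, and that it is a bijection onto $\maxconsideals{P}$. The engine behind all three is a single decoupling fact about the posets $\Sigma_\gamma$, which I will call $(\ast)$: \emph{no shared scc lies strictly below an scc of a different block}, i.e.\ if $Y \in \Sigma_{\gamma, \delta}$ and $Y <_\gamma X$ with $X \in \Sigma_\gamma \setminus \Sigma_{\gamma, \delta}$, then a contradiction arises. Granting $(\ast)$, the blocks $\Sigma_{\gamma, 0}$ and $\Sigma_{\gamma, \delta}$ sit inside each $\Sigma_\gamma$ with no order relation descending out of a shared block, which is exactly what lets the per-pair choices encoded by an ideal of $R$ (which has no relations across distinct blocks) assemble into a simultaneous ideal of every $\Sigma_\gamma$.

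First I would prove $(\ast)$ by contradiction using the uncrossing results. Given $Y <_\gamma X$ with $Y \in \Sigma_{\gamma, \delta}$ shared and $X$ in another block, choose a minimal $\gamma$-mincut $W$ containing $X$ (hence containing $Y$) and a $\delta$-mincut $Z$ containing $Y$ but not $X$. Such a $Z$ exists: since $X$ is not shared with $\delta$, Lemma~\ref{lem:intersection_of_isolating_cuts}~(1) makes $X$ disjoint from every scc of $\Sigma_\delta$, so if every $\delta$-mincut through $Y$ contained $X$ then $X \subseteq X_0^\delta$ would lie in every $\delta$-cut, while $X \in \Sigma_\gamma$ is freely placeable on the $\gamma$-side, violating disjointness of the two parts of a semi-multicut (together with the disjointness of minimal isolating cuts from Lemma~\ref{lem:uncrossing}). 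Then by Lemma~\ref{lem:uncrossing}~(1) the set $W \setminus Z$ is a $\gamma$-mincut containing $X$ but not $Y$, contradicting $Y <_\gamma X$.

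With $(\ast)$ in hand, consistency of $\overline{J}$ is immediate, since each shared scc is assigned to exactly one terminal; and its $\gamma$-slice is a $\Sigma_\gamma$-ideal because each block contributes a block-ideal---$J \cap \Sigma_{\gamma, \beta}$ for $\beta > \gamma$, the complement $\Sigma_{\gamma, \alpha} \setminus J$ for $\alpha < \gamma$ (using the order-reversal of Lemma~\ref{lem:intersection_of_isolating_cuts}~(2)), and all of $\Sigma_{\gamma, 0}$---while $(\ast)$ forbids any cross-block descent out of a shared block, giving global downward closure. Through Theorem~\ref{thm:PIP_Potts} this makes $\overline{J}$ a consistent ideal. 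For maximality, any consistent $I' \supsetneq \overline{J}$ would contain some $\paren{X, \gamma} \notin \overline{J}$; here $X$ cannot be private, as all private elements already lie in $\overline{J}$, so $X$ is shared and $\overline{J}$ has already placed its twin $\paren{X, \delta}$, whence $I'$ contains the inconsistent pair $\setprn{\paren{X, \gamma}, \paren{X, \delta}}$, a contradiction.

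For surjectivity I would show any maximal consistent ideal $I$ is \emph{saturated}: every shared block is fully partitioned and every private element is present. If some shared scc $X$ in a block $\Sigma_{\alpha, \beta}$ were unplaced, then on that block the $\alpha$-side is a $\leq_\alpha$-ideal $A$ and the $\beta$-side a $\leq_\alpha$-filter $B$ with $A \cap B = \varnothing$, and $I$ could be strictly enlarged by pushing the $\alpha$-side up to the larger $\leq_\alpha$-ideal $\Sigma_{\alpha, \beta} \setminus B$ and pulling in the needed down-sets, which by $(\ast)$ are purely private and hence conflict-free---contradicting maximality; a missing private element is handled the same way. Thus $I = \overline{J}$ for $J \defeq \setprnsep{X \in \Sigma_{\alpha, \beta}}{\alpha < \beta,\ \paren{X, \alpha} \in I}$, and $J$ is an ideal of $R$ because its restriction to each block equals the $\leq_\alpha$-ideal $I_\alpha \cap \Sigma_{\alpha, \beta}$ and $R$ has no cross-block relations; injectivity is clear since $\overline{J}$ reads off $J$ blockwise. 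The main obstacle is the decoupling fact $(\ast)$: the rest is bookkeeping with the order-reversal of Lemma~\ref{lem:intersection_of_isolating_cuts}~(2), but $(\ast)$ is where the geometry of intersecting isolating mincuts (Lemma~\ref{lem:uncrossing}) is genuinely used, and the delicate point is producing the separating $\delta$-mincut $Z$.
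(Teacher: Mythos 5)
Your proposal is correct and its overall architecture (well-definedness of $\overline{J}$, maximality via the ``exactly one twin'' property, surjectivity via saturation of maximal consistent ideals, blockwise injectivity) matches the paper's proof. The one substantive difference is how you justify the decoupling fact $(\ast)$ that no shared scc lies strictly below an scc of a different block inside any $\Sigma_\gamma$. The paper gets this essentially for free: Theorem~\ref{thm:PIP_Potts} already establishes (via Corollary~\ref{cor:elementary_pip}) that $P$ is elementary, and since the order on $P$ only relates pairs with equal second coordinate, (EP1) kills the shared-below-private case and (EP2-2) would force a cross-coordinate comparability in the shared-below-shared case, which is impossible; the paper's proof invokes exactly this when it asserts that $\Sigma_{\alpha,0}\times\setprn{\alpha}$ and $J\cup\Sigma_{\alpha,0}$ are ideals and when it produces $\paren{X,\beta'}\leq\paren{Y,\beta'}$ in the surjectivity step. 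You instead re-derive $(\ast)$ from scratch with the uncrossing machinery of Lemmas~\ref{lem:uncrossing} and~\ref{lem:intersection_of_isolating_cuts}; this is workable and self-contained, but it is also the most delicate part of your write-up, and it has a small gap: to conclude that $W\setminus Z$ is a $\gamma$-mincut \emph{containing} $X$ you need $X\cap Z=\varnothing$, not merely $X\not\subseteq Z$, since $Z$ could meet $X$ inside $X_0^\delta$ even when it does not contain $X$. The fix is easy (a $\gamma$-mincut is a union of sccs of $G_\alpha$, so it cannot properly intersect $X$, which disposes of the case $\varnothing\neq X\cap X_0^\delta\subsetneq X$), but the cleaner route is simply to cite elementarity. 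Your ``push the $\alpha$-side up to $\Sigma_{\alpha,\beta}\setminus B$'' argument for saturation is a nice, slightly more uniform alternative to the paper's principal-ideal maximality argument, and both rely on the same order-reversal from Lemma~\ref{lem:intersection_of_isolating_cuts}~(2).
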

\begin{proof}
  Let $J$ be an ideal of $R$. 
	We first show that $\overline{J}$ is a consistent ideal. 
	Consider $\paren{X', \alpha} \leq \paren{X, \alpha} \in \overline{J}$.
	If $\paren{X, \alpha} \in \Sigma_{\alpha, 0} \times \setprn{\alpha}$, then $\paren{X', \alpha} \in \Sigma_{\alpha, 0} \times \setprn{\alpha} \subseteq \overline{J}$ since $\Sigma_{\alpha, 0} \times \setprn{\alpha}$ is an ideal by (EP1).
	Suppose that $\paren{X, \alpha} \in \paren{J \cap \Sigma_{\alpha', \beta'}} \times \setprn{\alpha'}$.
	Then $\alpha = \alpha'$.
	Since $J$ is an ideal in $\Sigma_{\alpha, \beta}$, $J \cup \Sigma_{\alpha, 0}$ is an ideal in $\Sigma_\alpha$.
	Consequently $X' \in J \cup \Sigma_{\alpha, 0}$, and $\paren{X', \alpha} \in \overline{J}$.
	Suppose that $\paren{X, \alpha} \in \paren{\Sigma_{\beta', \alpha'} \setminus J} \times \setprn{\beta}$. Then $\alpha = \beta'$.
	Observe that $\Sigma_{\beta', 0} \cup \Sigma_{\beta', \alpha'} \setminus J$ is an ideal in $\Sigma_{\beta'}$. 
	From this, we obtain $\paren{X', \alpha} \in \overline{J}$, as above.
	Since $\overline{J}$ contains exactly one of $\paren{X, \alpha}, \paren{X, \beta} \in \Sigma$ with $\alpha \neq \beta$, the image $\overline{J}$ is consistent and maximal.
	
	Let $I$ be a maximal consistent ideal of $\Sigma$.
	Necessarily $I$ contains $\Sigma_{\alpha, 0} \times \setprn{\alpha}$ for all $\alpha \in \intset{k}$.
	Consider $\paren{X, \alpha}, \paren{X, \beta} \in \Sigma$ with $\alpha \neq \beta$.
	Then $I$ contains exactly one of $\paren{X, \alpha}, \paren{X, \beta}$.
	Otherwise, consider the principal ideal $I'$ of $\paren{X, \alpha}$, and the ideal $I \cup I'$.
	By maximality, $I \cup I'$ is inconsistent. 
	Then there are $\paren{Y, \alpha} \in I'$ and $\paren{Y, \beta'} \in I$ with $\alpha \neq \beta'$.
	By (EP1), (EP2) and Lemma~\ref{lem:intersection_of_isolating_cuts}, there is $\paren{X, \beta'} \in \Sigma$ with $\paren{X, \beta'} \leq \paren{Y, \beta'}$.
	Since $I$ is an ideal, it holds $\paren{X, \beta'} \in I$. 
	Necessarily $\beta = \beta'$ and $\paren{X, \beta} \in I$; this is a contradiction.
	
	For distinct $\alpha, \beta \in \intset{k}$ let $J_{\alpha, \beta} \defeq \setprnsep{X}{\paren{X, \alpha} \in (I \cap \Sigma_{\alpha, \beta}) \times \setprn{\alpha}}$. Then $\Sigma_{\alpha, \beta}$ is the disjoint union of $J_{\alpha, \beta}$ and $J_{\beta, \alpha}$ (as a set).   
	Thus, letting $J \defeq \bigcup_{1 \leq \alpha < \beta \leq k} J_{\alpha, \beta}$,
\end{proof}  

Therefore our problem of enumerating all maximal minimizers of $\tilde g$ is reduced to the enumeration of all ideals of poset $R$.
This is a well-studied enumeration problem. 
One of the current best algorithms is Squire's algorithm~\cite{Squire1995} that enumerates all ideals of an $n$-element poset in amortized $\Order{\log n}$ time per output.
\begin{theorem}
  From the elementary PIP for the minimizer set of Potts $k$-submodular function $\funcdoms{\tilde{g}}{{S_k}^n}{\setR}$, all maximal minimizers of $\tilde{g}$ can be enumerated in amortized $\Order{\log n}$ time per output.  
\end{theorem}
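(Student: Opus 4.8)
The plan is to obtain this statement as a corollary of the reduction already carried out in Theorem~\ref{thm:enumeration_potts}, once two quantitative facts are in place: a bound on the size of the poset $R$, and the amortized cost of a generic ideal-enumeration routine. First I would recall that the order isomorphism $I \mapsto \bigjoin_{x \in I} x$ of Theorem~\ref{thm:closed_set_and_pip} carries the $\preceq$-maximal minimizers of $\tilde{g}$ bijectively onto the inclusion-maximal consistent ideals $\maxconsideals{P}$ of the elementary PIP $P$. Composing this with the bijection $J \mapsto \overline{J}$ from the ideal family of $R$ onto $\maxconsideals{P}$ (Theorem~\ref{thm:enumeration_potts}), the task of listing all maximal minimizers is reduced, with neither omission nor repetition, to listing all ideals of the single poset $R$.

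The crucial quantitative step is then to bound $\absprn{R}$, and here the plan is to show that the elements of $R$, viewed as subsets of $V$, are nonempty and pairwise disjoint, whence $\absprn{R} \leq \absprn{V} = n$. Each element of $R$ is an scc lying in some $\Sigma_{\alpha, \beta} = \Sigma_\alpha \cap \Sigma_\beta$, so it consists of vertices belonging simultaneously to a minimum $\alpha$-cut and a minimum $\beta$-cut. By Lemma~\ref{lem:uncrossing}~(2) no vertex lies in minimum isolating cuts of three distinct labels, so a fixed vertex $v$ can appear only for one unordered pair $\setprn{\alpha, \beta}$; since the sccs within a single $\Sigma_\alpha$ are disjoint and distinct elements of $R$ that meet must coincide by Lemma~\ref{lem:intersection_of_isolating_cuts}~(1), $v$ lies in at most one element of $R$. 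This yields the disjointness and hence $\absprn{R} \leq n$, which is what upgrades the naive bound $\absprn{P} = \Order{kn}$ (Proposition~\ref{prop:number_of_irreducibles}) to a genuinely $\log n$ estimate. With $\absprn{R} \leq n$ in hand, I would invoke Squire's algorithm~\cite{Squire1995}, which enumerates all ideals of an $N$-element poset in amortized $\Order{\log N}$ time per ideal; taking $N = \absprn{R} \leq n$ gives amortized $\Order{\log n}$ per ideal, and through $J \mapsto \overline{J}$ each ideal yields exactly one maximal minimizer.

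The step I expect to need the most care is the accounting of what constitutes ``one output.'' Writing a maximal minimizer out in full as a vector of ${S_k}^n$ costs $\Order{n}$, which would defeat the $\Order{\log n}$ claim, so the statement must be read with outputs delivered in the incremental (difference) encoding that Squire's enumeration naturally produces, transported through the map $J \mapsto \overline{J}$: inserting or deleting one scc $X \in \Sigma_{\alpha,\beta}$ in $J$ only switches the label of the block $X$ between $\alpha$ and $\beta$ in $\overline{J}$, i.e.\ moves $X$ from the $\beta$-part to the $\alpha$-part of the corresponding semi-multicut. Thus the maximal minimizers are emitted as a sequence of local edits on the compact PIP representation, one edit per transition, and the whole enumeration stays within Squire's amortized budget. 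Verifying that this transport is genuinely local, rather than merely citing it, is the one point where the combinatorics of $R$ (Lemmas~\ref{lem:uncrossing} and~\ref{lem:intersection_of_isolating_cuts}) must be tied to the bookkeeping of the enumeration method; everything else is a routine composition of the bijections already established.
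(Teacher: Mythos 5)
Your proposal follows the paper's own (implicit) argument exactly: the theorem is obtained by composing the bijection $J \mapsto \overline{J}$ of Theorem~\ref{thm:enumeration_potts}, which reduces the problem to enumerating all ideals of the single poset $R$, with Squire's amortized $\Order{\log\absprn{R}}$-per-ideal enumeration algorithm. The two points you elaborate beyond the paper's text---the bound $\absprn{R} \leq n$ coming from the pairwise disjointness of the sccs in $R$ (via Lemmas~\ref{lem:uncrossing} and~\ref{lem:intersection_of_isolating_cuts}) and the convention that outputs are delivered in difference encoding so that one transition costs $\Order{\log n}$ rather than $\Order{n}$---are left implicit in the paper but are correct and worth making explicit.
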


\begin{remark}
  The above poset $R$ may be viewed as a ``compact representation'' of maximal minimizers of Potts $k$-submodular function $\tilde{g}$.
  In a general elementary PIP $P$ (for the minimizer set of a general $k$-submodular function), such a compact representation is still possible if $P$ has a maximal consistent ideal $C$ 
  satisfying the following property: 
  \begin{itemize}
    \item[(P)] $C$ contains exactly one of $x^\circ$ and $y^\circ$ for each $i, j$ of the case (EP2-1).
  \end{itemize}
  In this case, as in $J \mapsto \overline{J}$, there is a bijection between $\consideals{P \setminus C}$ and $\maxconsideals{P}$.
  One can see that the PIP $P \setminus C$ has a simple structure similar to the above poset $R$ (though it is not elementary). 
  We developed an algorithm to enumerate consistent ideals of 
  $P \setminus C$ in $\Order{n}$ time per output, and announced in the conference version of this paper~\cite{isco2016} that such a fast enumeration is possible for maximal consistent ideals 
  of PIP $P$.
  
  However we found an elementary PIP that having no maximal consistent ideal with the property (P); consider PIP $P = \setprn{x, x', y, y', z, z'}$ with $x \mininconsistent x'$, $y \mininconsistent y'$, $z \mininconsistent z'$, $x \succ y' \prec z$, $y \succ z' \prec x$, and 
  $z \succ x' \prec y$.
  Therefore \cite[Theorem 14]{isco2016} is not true for such a PIP.
\end{remark}

\section{Application}
\label{sec:application}

\subsection{$k$-submodular relaxation}
\label{sec:relaxation}

A \emph{$k$-submodular relaxation} $\tilde{f}$ of a function $\funcdoms{f}{\intset{k}^n}{\setRext}$ is a $k$-submodular function on ${S_k}^n$ such that $\app{f}{x} = \app{\tilde{f}}{x}$ for all $x \in \intset{k}^n \paren{\subseteq {S_k}^n}$.
Iwata--Wahlstr\"{o}m--Yoshida~\cite{IwataY2016} investigated $k$-submodular relaxations as a key tool for designing efficient FPT algorithms.
Gridchyn--Kolmogorov~\cite{Gridchyn2013} applied $k$-submodular relaxations to labeling problems on computer vision, which we describe below.

A label assignment is a process of assigning a label to each pixel of a given image.
For example, in the object extraction, each pixel is labeled as ``foreground'' or ``background''.
In stereo matching, the disparity of each pixel is computed from the given two photos taken from slightly different positions, and the pixel is labeled according to the estimated depth.
We consider the labels to be numbered from 1 to $k$.
Such a labeling problem is formulated as the problem of minimizing an \emph{energy function}.
A Potts energy function is simple but widely used energy function.
However the exact minimization of a Potts energy function is computationally intractable.
Gridchyn--Kolmogorov~\cite{Gridchyn2013} applied the $k$-submodular relaxation, that is, the energy function is relaxed to a $k$-submodular function by allowing some pixels to have 0 (meaning ``non-labeled'').
The following property, called \emph{persistency}~\cite{Gridchyn2013, IwataY2016}, is the reason why they introduced the relaxation.

\begin{theorem}[{\cite[Proposition~10]{Gridchyn2013} and~\cite[Lemma~2]{IwataY2016}}] \label{thm:persistency}
  Let $\funcdoms{f}{\intset{k}^n}{\setRext}$ be a function and $\funcdoms{\tilde{f}}{{S_k}^n}{\setRext}$ a $k$-submodular relaxation of $f$.
  For every minimizer $x \in {S_k}^n$ of $\tilde{f}$, there exists a minimizer $y \in \intset{k}^n$ of $f$ such that $x_i \neq 0$ implies $x_i = y_i$ for each $i \in \intset{n}$. 
\end{theorem}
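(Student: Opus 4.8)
The plan is to start from a minimizer $x$ of $\tilde{f}$ and an arbitrary minimizer $y \in \intset{k}^n$ of $f$ (one exists since $\intset{k}^n$ is finite), and to manufacture the promised witness by joining $x$ into $y$ twice. The two facts I would lean on are that $\tilde{f}$ coincides with $f$ on $\intset{k}^n$ and that $\app{\tilde{f}}{u} \geq \app{\tilde{f}}{x}$ for every $u \in {S_k}^n$, so that the meet terms produced by $k$-submodularity can always be discarded.

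First I would put $w \defeq x \sqjoin y$ and apply the $k$-submodular inequality \eqref{neq:k-submodularity} to the pair $x, y$:
\[
  \app{\tilde{f}}{x} + \app{\tilde{f}}{y} \geq \app{\tilde{f}}{x \sqmeet y} + \app{\tilde{f}}{w}.
\]
Since $\app{\tilde{f}}{x \sqmeet y} \geq \app{\tilde{f}}{x}$, cancelling $\app{\tilde{f}}{x}$ gives $\app{\tilde{f}}{w} \leq \app{\tilde{f}}{y}$. Next I would set $z \defeq x \sqjoin w$ and apply \eqref{neq:k-submodularity} to $x, w$:
\[
  \app{\tilde{f}}{x} + \app{\tilde{f}}{w} \geq \app{\tilde{f}}{x \sqmeet w} + \app{\tilde{f}}{z},
\]
and again $\app{\tilde{f}}{x \sqmeet w} \geq \app{\tilde{f}}{x}$ yields $\app{\tilde{f}}{z} \leq \app{\tilde{f}}{w} \leq \app{\tilde{f}}{y} = \min f$.

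To finish I would compute $z = x \sqjoin \paren{x \sqjoin y}$ coordinatewise: using the definition of $\sqjoin$ on ${S_k}^n$, one checks that $z_i = x_i$ whenever $x_i \neq 0$ and $z_i = y_i$ whenever $x_i = 0$. In particular every coordinate of $z$ is nonzero (those off $\supp x$ inherit the nonzero value $y_i$), so $z \in \intset{k}^n$ and hence $\app{f}{z} = \app{\tilde{f}}{z} \leq \min f$, forcing $z$ to be a minimizer of $f$; moreover $z_i = x_i$ for every $i \in \supp x$ by construction. Taking this $z$ as the promised minimizer completes the argument.

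The one point that needs care is that the single join $x \sqjoin y$ is \emph{not} the point agreeing with $x$ on its support: on a coordinate where $x_i$ and $y_i$ are distinct and both nonzero they are $\preceq$-incomparable, so the join sends them to $0$ and the label $x_i$ is lost. The trick that drives the proof is to reinstate these labels by joining with $x$ a second time; because $\sqjoin$ is not associative on ${S_k}^n$, I must keep the two joins ordered as $x \sqjoin \paren{x \sqjoin y}$ rather than collapsing them into a single expression. Everything else is the routine coordinatewise verification of the shape of $z$ and the check that $z$ lands in $\intset{k}^n$.
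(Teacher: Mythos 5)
Your proof is correct. Note that the paper does not prove Theorem~\ref{thm:persistency} at all---it imports the statement from \cite{Gridchyn2013} and \cite{IwataY2016}---and your argument is essentially the standard one from those references: take any minimizer $y$ of $f$, form $z = x \sqjoin \paren{x \sqjoin y}$, and apply the $k$-submodular inequality twice, using the minimality of $x$ to discard the meet terms. Your coordinatewise identification of $z$ (namely $z_i = x_i$ on $\supp x$ and $z_i = y_i$ elsewhere, so $z \in \intset{k}^n$) and your remark that the single join $x \sqjoin y$ destroys the labels on coordinates where $x_i$ and $y_i$ are distinct and nonzero are both accurate; the same double-join device $\paren{y \sqjoin x} \sqjoin x$ is used by the paper itself in Section~\ref{sec:minimizing_oracle} and in the proposition on supports of maximal elements.
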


Namely, each minimizer of $\tilde{f}$ gives us partial information about a minimizer of $f$.
An efficient algorithm for minimizing $k$-submodular relaxations of Potts functions was also proposed in~\cite{Gridchyn2013}.
Hence we can obtain a partial labeling extensible to an optimal labeling, which we call a \emph{persistent labeling}.

In Section~\ref{sec:potts} we gave an efficient algorithm to construct the elementary PIP representing all the minimizers of a Potts $k$-submodular function.
Since minimizers that contain more nonzero elements have more information, we want to find a minimizer whose support is largest.
In fact, such minimizers are precisely maximal minimizers.

\begin{proposition}
  Let $M$ be a $\paren{\sqmeet, \sqjoin}$-closed set on ${S_k}^n$.
  The supports of maximal elements in $M$ are the same.
\end{proposition}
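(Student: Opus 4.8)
The plan is to reduce the statement to the following sharper claim: for every maximal element $x \in M$ and \emph{every} $y \in M$, one has $\supp y \subseteq \supp x$. Granting this, if $x$ and $x'$ are two maximal elements, then $\supp x' \subseteq \supp x$ and $\supp x \subseteq \supp x'$, so $\supp x = \supp x'$, which is exactly the assertion (maximal elements exist because $M$ is finite and nonempty).

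I would fix a maximal $x$ and an arbitrary $y \in M$ and argue by contradiction, assuming there is an index $i_0 \in \supp y \setminus \supp x$, so that $y_{i_0} \neq 0 = x_{i_0}$. The naive idea is to enlarge $x$ by joining it with $y$; the obstruction is that $\sqjoin$ sets a coordinate to $0$ whenever $x_i$ and $y_i$ are both nonzero but distinct (a ``conflict''), so $x \sqjoin y$ need not dominate $x$ and maximality cannot be invoked directly. This conflict phenomenon is the main difficulty.

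The key step that resolves it is to apply the join twice: set $z = x \sqjoin y$ and then $w = x \sqjoin z$, both of which lie in $M$ since $M$ is $\paren{\sqmeet, \sqjoin}$-closed. I would then compute $w$ coordinatewise. For $i \notin \supp x$ we have $x_i = 0$, hence $z_i = y_i$ and $w_i = y_i$; in particular $w_{i_0} = y_{i_0} \neq 0$. For $i \in \supp x$, either $y_i \in \setprn{0, x_i}$, in which case $z_i = x_i$, or $\setprn{x_i, y_i}$ is a conflict, in which case $z_i = 0$; in both subcases $w_i = x_i \sqjoin z_i = x_i$. Thus $w$ agrees with $x$ on $\supp x$ and equals $y$ off $\supp x$, so $x \preceq w$, while $w_{i_0} \neq 0 = x_{i_0}$ forces $w \neq x$. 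Hence $x \prec w \in M$, contradicting the maximality of $x$. This establishes the sharper claim, and with it the proposition.

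I expect the only subtle point to be the coordinatewise verification that the \emph{second} join heals every conflict coordinate, restoring the original value $x_i$ there while preserving the newly acquired nonzero value at $i_0$; once this bookkeeping is carried out the contradiction is immediate. Notably, no appeal to the median-semilattice axioms or to the PIP machinery is needed—only closure of $M$ under $\sqjoin$.
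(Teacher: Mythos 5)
Your proof is correct and uses essentially the same device as the paper: the double join $x \sqjoin \paren{x \sqjoin y}$ is, up to swapping the roles of $x$ and $y$, exactly the paper's element $\paren{x \sqjoin y} \sqjoin y$, which overrides one argument by the nonzero coordinates of the other and thereby ``heals'' the conflict coordinates. The only (harmless) difference is that you derive the slightly sharper statement $\supp y \subseteq \supp x$ for \emph{every} $y \in M$ and argue by contradiction, whereas the paper takes both elements maximal and concludes by symmetry.
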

\begin{proof}
  Let $x, y \in M$ be maximal and $z \defeq \paren{x \sqjoin y} \sqjoin y$.
  For each $i \in \intset{n}$, it holds $z_i = x_i$ if $y_i = 0$ and $z_i = y_i$ if $y_i \neq 0$.
  In particular, $y \preceq z$ and $\supp z = \supp x \cup \supp y$ hold.
  Since $y$ is maximal, we obtain $y = z$ and $\supp x \subseteq \supp y$.
  By changing the role of $x$ and $y$, we also have $\supp x \supseteq \supp y$.
  Thus $\supp x = \supp y$.
\end{proof}

From this lemma, it turns out that all maximal minimizers of $\tilde{f}$ have the same and largest amount of information about minimizers of $f$.
In labeling problem with Potts energy, all maximal persistent labelings (with respect to a $k$-submodular relaxation) can be efficiently generated by the algorithm in Section~\ref{sec:enumeration}.

\subsection{Experiments}
We implemented our algorithm on the stereo matching problem with Potts energy function~\eqref{eq:potts}.
This has an aspect of the replication of the experiment in~\cite{Gridchyn2013}, but we computed not only one of the persistent labelings but also its PIP-representation.
We used ``tsukuba'' and ``cones'' in the Middlebury data~\cite{Scharstein2003, Scharstein2001} as input images.

\paragraph{Problem setting and formulation.}
We are given photo images $L$ and $R$ taken from left and right positions, respectively.
The images $L$ and $R$ are $N \times M$ arrays such that entries $\sqapp{L}{x, y}$ and $\sqapp{R}{x, y}$ are RGB vectors $\in \setprn{0, 1, 2, \ldots, 255}^3$ of the intensity at pixel $\paren{x, y}$, where each pixel is represented by a pair $\paren{x, y}$ of its horizontal coordinate $x = 1, 2, \ldots, N$ and vertical coordinate $y = 1, 2, \ldots, M$.
The goal of the stereo matching problem is to assign to each pixel the ``disparity label'' $\in \intset{k}$ that represents the depth of the object on the pixel.
We model this problem as a minimization of a Potts energy function~\eqref{eq:potts} on diagonal grid graph $\paren{V, E}$, where $V$ is the set of pixels, and two pixels $\paren{x, y}$ and $\paren{x', y'}$ have an edge in $E$ if and only if $\absprn{x - x'} \leq 1$ and $\absprn{y - y'} \leq 1$.
The first and the second term of~\eqref{eq:potts} are called ``data term'' and ``smoothness term''~\cite{Scharstein2001}, respectively.

For each pixel $i \in V$, the data term $g_i$ measures how well the estimated disparity of pixel $i$ agrees with the pair of given images.
We employed the traditional averaged SSD (sum of squared difference) costs as in~\cite{Gridchyn2013}:
\begin{align}
  \app{g_i}{\alpha} \defeq \text{the nearest integer of $\frac{1}{\absprn{W_i}} \sum_{\paren{x, y} \in W_i} \norm{\sqapp{L}{x, y} - \sqapp{R}{x - d_\alpha, y}}^2$}
  \quad
  \paren{\alpha \in \intset{k}},
\end{align}
where $W_i$ is the $9 \times 9$ window centered at $i = \paren{x, y}$ (i.e., the set of pixels $\paren{x', y'}$ with $\absprn{x' - x} \leq 4$ and $\absprn{y' - y} \leq 4$), $\norm{\cdot}$ is the 2-norm, $d_\alpha \geq 0$ is the disparity corresponding to the label $\alpha$.
We set $d_\alpha \defeq 2 \paren{\alpha - 1}$ for each $\alpha \in \intset{k}$.

For each pair of adjacent pixels $\setprn{i, j} \in E$, the smoothness term increases the energy by $\lambda_{i, j}$ if $i$ and $j$ have different labels.
We set every $\lambda_{i, j}$ to be the same value $\lambda$ as in~\cite{Gridchyn2013}, and conducted experiments with $\lambda = 1$ and 20 to see the effect of $\lambda$.

\paragraph{Experimental results.}

\begin{figure}[tbp]
  \begin{minipage}[t][4.5cm][t]{0.33\linewidth}
    \centering
    \includegraphics[scale=0.35]{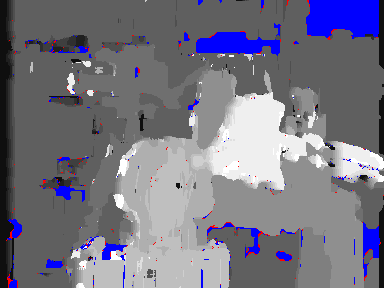}
    \subcaption{$\lambda = 1$}
  \end{minipage}%
  \begin{minipage}[t][4.5cm][t]{0.33\linewidth}
    \centering
    \includegraphics[scale=0.35]{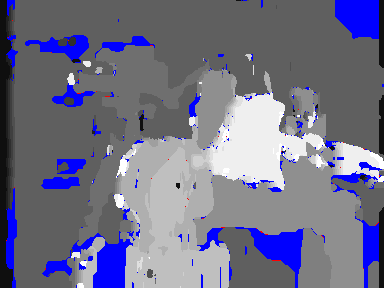}
    \subcaption{$\lambda = 20$}
  \end{minipage}%
  \begin{minipage}[t][4.5cm][t]{0.33\linewidth}
    \centering
    \includegraphics[scale=0.35]{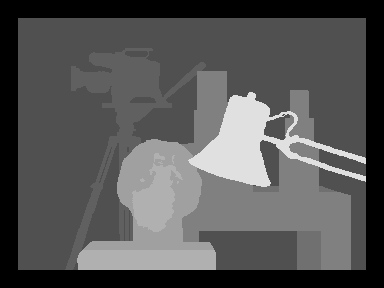}
    \subcaption{ground truth}
  \end{minipage}
  \begin{minipage}[b]{0.33\linewidth}
    \centering
    \includegraphics[scale=0.3]{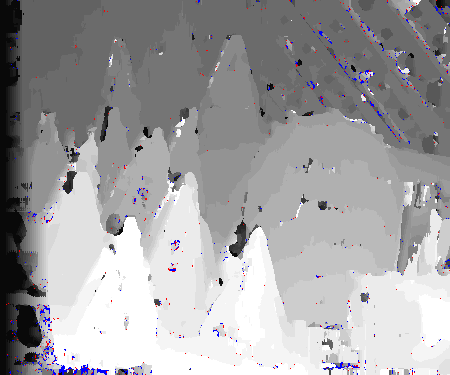}
    \subcaption{$\lambda = 1$}
  \end{minipage}%
  \begin{minipage}[b]{0.33\linewidth}
    \centering
    \includegraphics[scale=0.3]{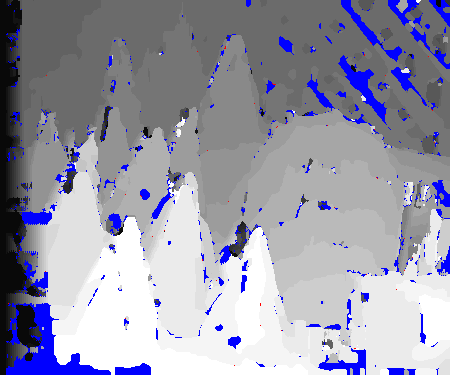}
    \subcaption{$\lambda = 20$}
  \end{minipage}%
  \begin{minipage}[b]{0.33\linewidth}
    \centering
    \includegraphics[scale=0.3]{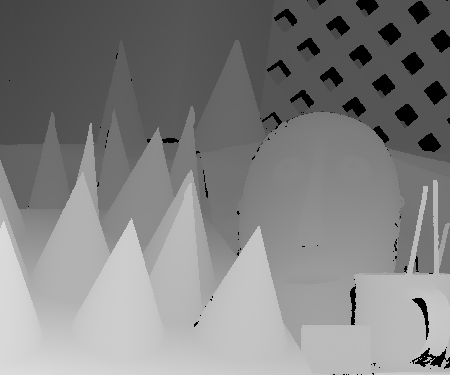}
    \subcaption{ground truth}
  \end{minipage}
  \caption{Results for ``tsukuba'' (top row) with $k = 16$ and ``cones'' (bottom row) with $k = 26$.}
  \label{fig:result_image}
\end{figure}

Figure~\ref{fig:result_image} and Table~\ref{tbl:result} show the results of our experiment.
The pixels labeled in the minimum persistent labeling are colored in gray of the brightness corresponding to each label.
The blue pixels are unlabeled even in maximal persistent labelings.
The red pixels are the difference between the minimum persistent labeling and maximal ones, i.e., the pixel was labeled in (any of) maximal persistent labelings but not in the minimum one. 
We can observe that there are few red pixels as mentioned in~\cite{Gridchyn2013}, and they are mainly located on the boundary of two regions with different labels.
A possible reason is the following: consider a simple 1-dimensional case where a pixel $i$ is adjacent only to pixels $j$ and $j'$.
Let $x \in {S_k}^n$ be the minimum persistent labeling and assume $x_{j'} \neq x_j \neq 0$.
Then the increment of the energy is the same ($= \lambda$) even if $x_i$ is set to any of $\setprn{0, x_j, x_{j'}}$.
Therefore the pixel $i$ will be red if $\app{\tilde{g}_i}{0} = \app{\tilde{g}_i}{x_j}$, and thus we think that this will occur in boundaries more frequently than inside of regions.

With regard to the effect of $\lambda$, the larger $\lambda$ decreases the percentages of gray and red pixels on both tsukuba and cones, and increases the blue pixels to the contrary.
This result agrees with the experiments in~\cite{Gridchyn2013}.
We consider that this is due to the fact that the value of each $\app{\tilde{g}_i}{0}$ is moderately lower in $\tilde{g}_i$ since $\app{\tilde{g}_i}{0}$ is the average of the minimum and the second minimum values of $g_i$ as described in Section~\ref{sec:potts}.
Hence if $\lambda$ is large, the energy will be lower just by letting all $x_i \defeq 0$ than by tuning each $x_i$ finely according to the values of the corresponding data term $\tilde{g}_i$.

\begin{table}[tbp]
  \centering
  \caption{Experimental result}
  \begin{tabular}{l|r|ccc|c}\hline
    image   & \multicolumn{1}{|c|}{$\lambda$} & \% of gray & \% of red & \% of blue & \# of max. pers. labelings \\ \hline \hline
    tsukuba & $1$  & 93.84 & 0.53 & 5.63 & $2^{66} \times 3^{5} \times 4 \times 5^2 \times 18$\\
    tsukuba & $20$ & 90.64 & 0.07 & 9.29 & $2^{11}$ \\
    cones   & $1$  & 99.00 & 0.30 & 0.70 & $2^{114} \times 3^{9} \times 5$ \\
    cones   & $20$ & 93.37 & 0.04 & 6.59 & $2^{17} \times 3$ \\
    \hline
  \end{tabular}
  \label{tbl:result}
\end{table}

\begin{table}[tbp]
  \centering
  \caption{Experimental result without rounding in $g_i$}
  \begin{tabular}{l|r|ccc|p{50pt}c}\hline
    \multirow{2}{*}{image} & \multicolumn{1}{c|}{\multirow{2}{*}{$\lambda$}} & \multirow{2}{*}{\% of gray} & \multirow{2}{*}{\% of red} & \multirow{2}{*}{\% of blue} & \multicolumn{2}{c}{\# of pers. labelings} \\ \cline{6-7}
    &&&&& \hfil all \hfil & \hfil max \hfil \\ \hline \hline
    tsukuba & $1$  & 94.18 & 0.007 & 5.81 & \hfil $2^4 \times 3^3$ \hfil & \hfil $2^3$ \hfil \\
    tsukuba & $20$ & 90.71 & 0.002 & 9.29 & \hfil $2^2$            \hfil & \hfil 1     \hfil \\
    cones   & $1$  & 99.20 & 0.007 & 0.80 & \hfil $2^7 \times 3^2$ \hfil & \hfil $2^2$ \hfil \\
    cones   & $20$ & 93.40 & 0.000 & 6.60 & \hfil 1                \hfil & \hfil 1     \hfil \\
    \hline
  \end{tabular}
  \label{tbl:result_without_rounding}
\end{table}

\paragraph{The structure of the PIP.}

\begin{figure}[tbp]
  \centering
  \includegraphics[scale=0.16]{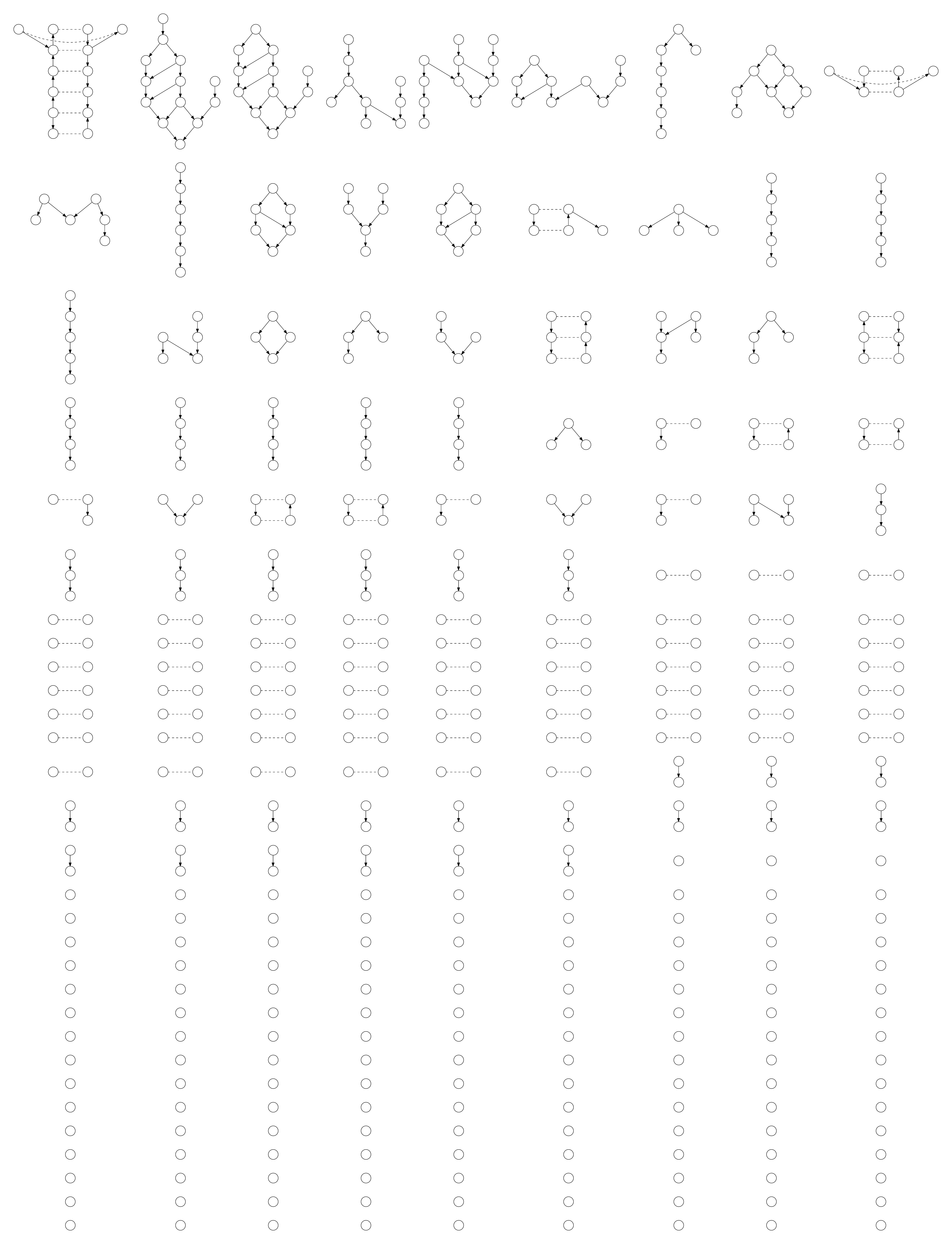}
  \caption{The elementary PIP for persistent labelings of tsukuba with $\lambda = 1$.}
  \label{fig:result_pip}
\end{figure} 

Figure~\ref{fig:result_pip} shows the PIP-representation for the partial labelings on tsukuba with $\lambda = 1$.
The PIP consists of many small connected PIPs, which correspond to each of connected red regions in Figure~\ref{fig:result_image}.
Thus we can easily calculate the number of maximal persistent labelings by multiplying the number of maximal consistent ideals of each connected PIP (notice that an ideal of the PIP is maximal and consistent if and only if it contains all elements having no inconsistent pair and one element of each minimally inconsistent pairs).
The right most column of Table~\ref{tbl:result} shows the number of maximal persistent labelings in each experiment.
We discovered the fact that there are plenty of maximal persistent labelings even though the percentages of red pixels are small.

\paragraph{Effect of rounding.}
In our experiment, the data term $g_i$ is defined to be integer-valued by rounding a rational to the nearest integer.
One of the referees conjectured that if $g_i$ is defined without the rounding, then there is a unique persistent labeling.
We did an experiment to verify this conjecture.
Table~\ref{tbl:result_without_rounding} shows the result.
Without the rounding, the percentage of the red pixels  and the number of persistent labelings dramatically decrease, though there are some cases where a persistent labeling is not unique.

\section*{Acknowledgments}
We thank Kazuo Murota, Satoru Fujishige, and the referees for helpful comments.
This work was partially supported by JSPS KAKENHI Grant Numbers 25280004, 26330023, 26280004, and by JST, ERATO, Kawarabayashi Large Graph Project.

\end{document}